\theoremstyle{plain}
\newtheorem{thm}{Theorem}[section]
\newtheorem{cor}{Corollary}[section]
\newtheorem{lem}{Lemma}[section]
\newtheorem{prop}{Proposition}[section]
\theoremstyle{remark}
\newtheorem{rem}{{\rm\bf Remark}}[section]
\numberwithin{equation}{section}
\newcommand{\ee}{\mathbb{E}}
\newcommand{\rr}{\mathbb{R}}
\newcommand{\pp}{\mathbb{P}}
\newcommand{\zz}{\mathbb{Z}}
\def\FF{\mathcal F}
\def\FF{\mathcal F}
\def\dsp{\displaystyle}
\begin{document}

\title[MDP for empirical covariance from a unit root]
{Moderate deviations principle for empirical covariance from a unit root}

\author[Y. Miao]{Yu Miao}
\address[Y. Miao]{College of Mathematics and Information Science, Henan Normal University, Henan Province, 453007, China.}
\email{\href{mailto: Y. Miao
<yumiao728@yahoo.com.cn>}{yumiao728@yahoo.com.cn}}

\author[Y. -L. Wang]{Yan-Ling Wang}
\address[Y. -L. Wang]{College of Mathematics and Information Science, Henan Normal University, Henan Province, 453007, China.}

\author[G. -Y. Yang]{Guang-yu Yang}
\address[G. -Y. Yang]{Department of Mathematics, Zhengzhou University, Henan Province, 450001, China.}
\email{\href{mailto: G. -Y. Yang
<study\_yang@yahoo.com.cn>}{study\_yang@yahoo.com.cn}}

\begin{abstract}
In the present paper, we consider the linear autoregressive model in
$\rr$,
$$ X_{k,n}=\theta_n X_{k,n-1}+\xi_k,\ \ k=0,1,\cdots,n,\ \ n\ge
1$$ where $\theta_n\in [0,1)$ is unknown, $(\xi_k)_{k\in\zz}$ is a
sequence of centered i.i.d. r.v. valued in $\rr$ representing the
noise. When $\theta_n\to 1$, the moderate deviations principle for
empirical covariance is discussed and as statistical applications we
provide the moderate deviation estimates of the least square and the
Yule-Walker estimators of the parameter $\theta_n$.

\end{abstract}

\keywords{Moderate deviations principle, empirical covariance unit
root, autoregressive model.}

\subjclass[2000]{60F10, 60G10, 62J05.}

\maketitle



\section{Introduction}
There is a great deal of the econometric literature of the last 20
years which has focused on the issue of testing for the unit root
hypothesis in economic time series. Regression asymptotics with roots at or near unity have played an important role in time
series econometrics. This has been typically done by
using autoregressive models with fixed coefficients and then testing
for the autoregressive parameter being equal to 1 \cite{DF79, DF81}.
More recently, some attention has been dedicated to random
coefficient autoregressive models. This way of handling the data
allows for large shocks in the dynamic structure of the model, and
also for some flexibility in the features of the volatility of the
series, which are not available in fixed coefficient autoregressive
models.

In the present paper, we consider the following linear
autoregressive model in $\rr$,
 \begin{equation}\label{M}
  X_{k,n}=\theta_n X_{k-1,n}+\xi_k,\ \ k=0,1,\cdots,n,\ \ \ n\ge 1
 \end{equation}
where $\theta_n\in \Theta\subset\rr$ (the space of parameter) is
unknown, $(\xi_k)_{k\in\zz}$ is a sequence of centered i.i.d. r.v.
valued in $\rr$ representing the noise and which is independent of
$X_{0,n}$, and $(X_{k,n})_{0\le k\le n}$ is observed. For every
$n\ge 0$, assume that the law of $X_{0,n}$ is invariant (or
equivalently $(X_{k,n})_{0\le k\le n}$ is stationary), it is easy to
see a stationary solution to (\ref{M}), which is given by
$$
X_{k,n}=\sum_{p=0}^\infty\theta_n^p\xi_{k-p},\ \ k\ge 0
$$
only if $|\theta_n|< 1$.

It is not difficult to see that the linear autoregressive model (\ref{M}) is a special moving average process.
A
general moving average process is given by
$$
X_n:=\sum_{j=-\infty}^{+\infty}a_{j-n}\xi_j=\sum_{j=-\infty}^{+\infty}a_j\xi_{n+j},
\ \ \ \ \forall\ n\in \zz,
$$
where $(\xi_n)_{n\in\zz}$ is i.i.d., $(a_n)_{n\in \zz}$ is a
sequence of real numbers such that
$$
\sum_{n\in \zz}|a_n|^2<\infty.
$$

There are two important issues for the model (\ref{M}): (1) the
estimate of the covariance $Cov(X_{0,n},X_{l,n}):=\ee(X_{0,n}-\ee
X_{0,n})(X_{l,n}-\ee X_{l,n})$; (2) the estimate of $\theta_n$. The
most natural estimator of $Cov(X_{0,n},X_{l,n})$ ($l\ge 0$) is given
by the empirical covariance (with the given sample $(X_{k,n})_{0\le
k\le n-l}$)
 \begin{equation}\label{C}
 C^{*}_{l,n}=\frac{1}{n-l}\sum_{k=1}^{n-l}X_{k+l,n}X_{k,n}
 \end{equation}
and for estimating $\theta_n$, the following two estimators are widely
used:

(i) Least Square Estimator:
 \begin{equation}\label{LS}
  \hat\theta_{n}=\frac{\sum_{k=1}^nX_{k,n}X_{k-1,n}}{\sum_{k=1}^nX_{k-1,n}^2}.
 \end{equation}

 (ii) Yule-Walker Estimator:
 \begin{equation}\label{YW}
  \tilde\theta_{n}=\frac{\sum_{k=1}^nX_{k,n}X_{k-1,n}}{\sum_{k=0}^nX_{k,n}^2}.
 \end{equation}

In this paper, we are concerned with the moderate deviations
principle of the covariance estimation $ C^{*}_{l,n}$ and the
parameter estimators $\hat\theta_n, \tilde\theta_n$ for the linear
autoregressive model under the case: $\theta_n\in[0,1)$ and
$\theta_n\to 1$.

 The study on large deviations and moderate deviation are relatively recent and these works concentrate almost
 on the case of the fixed autoregressive coefficient $\theta_n\equiv\theta\in(-1,1)$, i.e.,
 \begin{equation}\label{M-1}
 X_n=\theta X_{n-1}+\xi_n, \ \ \ n\ge 0.
 \end{equation}
  For the Gaussian case (i.e., the noise $\xi$ is assumed Gaussian), this
subject is opened by Donsker and Varadhan \cite{DV2} who proved the
level-$3$ large deviation principle (the definition of large
deviations of level-$3$ could be found in \cite{Ellis}) for general
stationary Gaussian processes under the continuity of the spectral
function. Bryc and Dembo \cite{BD1} proved for the first the large
and moderate deviation principles for the empirical variance
$C^*_{0,n}(=n^{-1}\sum_{k=1}^n X_k^2)$ even for general stationary
Gaussian processes. Bercu et al. \cite{BGR} proved the large
deviation principle for $C^*_{l,n}(=n^{-1}\sum_{k=1}^n X_{k+l}X_k)$,
$l\ge 0$ (which is much more delicate than $C^*_{0,n}$) and for
$\hat\theta_{n}$, $\tilde\theta_{n}$.

For the Non-Gaussian case, Wu \cite{Wu04} first extended Donsker-Varadhan's theorem
on large deviations of level-$3$ from stationary Gaussian processes to
general moving average processes under the Gaussian integrability
condition on the driven variable $\xi$. Djellout et al. \cite{DGW06}
established, in the one-dimensional case, moderate deviation
principle for non-linear functionals of general moving average
processes covering the case of $C_{l,n}^*$ and for the periodogram,
but under the assumption that the law of the driven random  variable
$\xi$ satisfies the log-Sobolev inequality, stronger than the
Gaussian integrability in \cite{Wu04}.

 For the case of Hilbertian autoregressive model with driven random variable
$\xi$ satisfying the Gaussian integrability condition, in which
$\{\xi_k, X_k\}_{k\in\zz}$ take values in some separable Hilbert
space $H$, Mas and Menneteau \cite{MaMl} established large and
moderate deviation for the empirical mean
$\overline{X}_n=\frac{1}{n}\sum_{k=1}^n X_k$, and moderate
deviation for the empirical variance matrix
$\frac{1}{n}\sum_{k=1}^{n}X_k\otimes X_k$, where $x\otimes y$
($x,y\in H$) denotes the linear operator from $H$ to $H$,
$$
 x\otimes y: h\in H\to \langle x, h\rangle y,
$$
extending the result of Bryc-Dembo \cite{BD1} from $\rr^d$ to $H$,
and especially from Gaussian case to general sub-Gaussian case.
Furthermore, Menneteau \cite{Ml} obtained some laws of the iterated
logarithm in Hilbertian autoregressive models for the empirical
covariance $\frac{1}{n}\sum_{k=1}^{n}X_k\otimes X_k$. Recently, Miao
and Shen \cite{MS} obtained a moderate deviations principle for
$C_{n,l}^*$ of the autoregressive process (\ref{M-1}), which
removed the assumption of log-Sobolev inequality on the driven
variable in \cite{DGW06}, for the particular but important
auto-regression model. In addition, they provided the moderate
deviation estimates of the least squares and the Yule-Walker
estimators of the unknown parameter of an autoregressive process. In
\cite{M1}, the author also considered the discounted large deviation
principle for the autoregressive processes (\ref{M-1}).

Our main purpose in the paper is to extend the moderate deviations
principle for the empirical covariance from the case
$\theta_n=\theta$ to the case $ \theta_n\to 1$. The method of proof
relies mainly on a moderate deviation for triangular arrays of
finitely-dependent sequences and the exponential approximation. This
paper is organized as follows. The next section is devoted to the
descriptions of our main results and their statistical applications.
In Section 3, we give some preparations and develop a new moderate
deviation for $m$-dependent sequence with unbounded $m$. The proofs
of main results are obtained in the remaining sections.

\section{Main results}

\subsection{Assumptions}

Let $\{\xi_n\}_{n\in\zz}$ be a sequence of real valued centered
i.i.d. random variables, and suppose that the following conditions
hold:
\begin{enumerate}
\item  the unknown parameter $\theta_n$ satisfies $\theta_n\in[0,1)$, $\theta_n\to 1$;

 \item $\ee\xi_0=0$ and $\xi_0$ satisfies the Gaussian integrability condition, i.e.,
 there exists $\alpha>0$, such that
 $$
 \ee e^{\alpha \xi_0^2}<\infty;
 $$

\item the moderate deviation scale $(b_n)$ is a sequence of positive numbers satisfying
$$
 b_n\to\infty,\ \ \ \frac{\sqrt{n}(1-\theta_n)^2}{b_n}\to\infty.
$$

\end{enumerate}

Here we need to note that the condition (3) implies
$$
\dsp\lim_{n\to\infty}n(1-\theta_n)=\infty,\ \ \text{and} \ \
\frac{\sqrt n}{b_n}\to \infty.
$$
\subsection{Moderate deviations principle}
The following is our main theorem.
\begin{thm}\label{mdp-thm1}
Assume that the conditions (1), (2) and (3) are satisfied and let
$M$ be a non-negative integer, then for all $r>0$, when $0\le l\le
M$, we have
 \begin{equation}\label{mdp-thm-11}
 \lim_{n\to\infty}\frac{1}{b_n^2}\log\pp\left(\frac{(1-\theta_n^2)^{3/2}\sqrt{n}}{b_n}\left|C^{*}_{l,n}-\ee C_{l,n}^{*}\right|\ge r\right)
 =-\frac{r^2}{8(\ee\xi_0^2)^2}.
 \end{equation}
\end{thm}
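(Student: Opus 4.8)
The plan is to realize the centered empirical covariance as a normalized partial sum of the stationary sequence $Y_{k}:=X_{k+l,n}X_{k,n}-\ee(X_{k+l,n}X_{k,n})$, $1\le k\le n-l$, and then to establish a Gaussian-type moderate deviations principle for it. Writing $S_{n}=\sum_{k=1}^{n-l}Y_{k}$, we have $C^{*}_{l,n}-\ee C^{*}_{l,n}=S_{n}/(n-l)$, so the random variable controlled in \eqref{mdp-thm-11} is $b_{n}^{-1}\tilde S_{n}$ with $\tilde S_{n}:=(1-\theta_{n}^{2})^{3/2}\sqrt{n}\,(n-l)^{-1}S_{n}$. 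Since the target value $r^{2}/\big(8(\ee\xi_{0}^{2})^{2}\big)=r^{2}/\big(2\cdot 4(\ee\xi_{0}^{2})^{2}\big)$ is precisely the Gaussian rate $r^{2}/(2v)$ attached to a limiting variance $v=4(\ee\xi_{0}^{2})^{2}$, the whole argument is organized around showing that $\tilde S_{n}$ obeys an MDP at speed $b_{n}^{2}$ with good rate $x^{2}/(8(\ee\xi_{0}^{2})^{2})$; specializing to the symmetric event $\{|\tilde S_{n}|/b_{n}\ge r\}$ then yields \eqref{mdp-thm-11}.

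First I would pin down the variance normalization. Using the moving-average representation $X_{k,n}=\sum_{p\ge0}\theta_{n}^{p}\xi_{k-p}$ and $\ee(X_{a,n}X_{b,n})=\theta_{n}^{|a-b|}\ee\xi_{0}^{2}/(1-\theta_{n}^{2})$, a direct computation gives $\mathrm{Cov}(Y_{0},Y_{j})=\gamma(j)^{2}+\gamma(j-l)\gamma(j+l)+\kappa_{j}$, where $\gamma(h)=\theta_{n}^{|h|}\ee\xi_{0}^{2}/(1-\theta_{n}^{2})$ and $\kappa_{j}$ collects the fourth-cumulant contributions of $\xi_{0}$. Summing over $j$, the first two terms contribute $\sim 4(\ee\xi_{0}^{2})^{2}/(1-\theta_{n}^{2})^{3}$, whereas $\sum_{j}\kappa_{j}=O\big((1-\theta_{n}^{2})^{-2}\big)$ is of strictly lower order. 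Hence $\mathrm{Var}(\tilde S_{n})\to 4(\ee\xi_{0}^{2})^{2}$, and, crucially, the non-Gaussian corrections are washed out in the limit $\theta_{n}\to1$: this is why the rate depends only on $\ee\xi_{0}^{2}$ and why the result holds for general (non-Gaussian) noise.

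Next I would pass to a finitely dependent approximation. The sequence $(Y_{k})$ is not $m$-dependent, but because $\theta_{n}^{p}$ decays geometrically I would truncate $X_{k,n}^{(m)}=\sum_{p=0}^{m}\theta_{n}^{p}\xi_{k-p}$ to produce an $(m+l)$-dependent stationary triangular array $Y_{k}^{(m)}$, and apply the moderate deviation principle for $m$-dependent sequences with unbounded $m$ developed in Section~3. Since $\theta_{n}\to1$, the truncation level $m=m_{n}$ must diverge; I would choose $m_{n}$ so that the hypotheses of that $m$-dependent MDP are met while the replacement error is exponentially negligible at speed $b_{n}^{2}$, namely
$$
\limsup_{n\to\infty}\frac{1}{b_{n}^{2}}\log\pp\!\left(\frac{(1-\theta_{n}^{2})^{3/2}\sqrt{n}}{b_{n}(n-l)}\Big|\sum_{k=1}^{n-l}\big(Y_{k}-Y_{k}^{(m_{n})}\big)\Big|\ge\delta\right)=-\infty\qquad\text{for all }\delta>0.
$$
The Gaussian integrability assumption~(2) is exactly what furnishes the exponential control of the tails of these quadratic increments needed here and in the $m$-dependent MDP, in place of the stronger log-Sobolev hypothesis used in earlier work.

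Combining the $m_{n}$-dependent MDP for $\tilde S_{n}^{(m_{n})}$ with the exponential equivalence provided by the truncation estimate then transfers the MDP to $\tilde S_{n}$ and gives \eqref{mdp-thm-11}. I expect the main obstacle to be the simultaneous calibration of $m_{n}$: it must grow fast enough to annihilate the truncation error uniformly across the near-unit-root regime, yet slowly enough that the dependence range $m_{n}$ stays compatible with the conditions of the $m$-dependent MDP and with the scaling constraint $\sqrt{n}(1-\theta_{n})^{2}/b_{n}\to\infty$ from assumption~(3) (equivalently, with the effective sample size $n(1-\theta_{n})\to\infty$). Showing that the \emph{quadratic} nature of $Y_{k}$ does not alter the Gaussian rate at the moderate scale is the technical heart of the argument.
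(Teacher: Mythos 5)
Your strategy is sound and lands on the correct variance $4(\ee\xi_0^2)^2$, and it shares the paper's two main engines (the unbounded-$m$ $m$-dependent MDP of Lemma \ref{lem-miao} and an exponential approximation of the truncation error), but it takes a genuinely different route at the level of the decomposition. The paper does \emph{not} truncate the quadratic process $Y_k=X_{k+l,n}X_{k,n}-C_{l,n}$ directly. Instead it first proves the autoregressive representation $Z_{k,l,n}=\theta_n^2Z_{k-1,l,n}+U_{k,l,n}$ (Lemma \ref{lem1}), which gives the exact identity $C^*_{l,n}-C_{l,n}=\bar U_{l,n}/(1-\theta_n^2)+\theta_n^2(Z_{0,l,n}-Z_{n-l,l,n})/((n-l)(1-\theta_n^2))$; the boundary terms are killed by Lemma \ref{mdp-lem1}, and the whole analysis is then carried out on the innovations $U_{k,l,n}$, which have variance of order $(1-\theta_n^2)^{-1}$ rather than $(1-\theta_n^2)^{-2}$ and are correlated only up to lag $l$. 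This buys two things that your direct route has to pay for elsewhere. First, the covariance bookkeeping (Propositions \ref{prop-U}--\ref{prop-UUU} and Corollary \ref{cor-0}) and the verification of conditions (B)--(D) of Lemma \ref{lem-miao} are done for short-range-correlated variables; in your route the truncated $Y_k^{(m)}$ retain correlations over a range of order $(1-\theta_n)^{-1}$, so you must still impose $m(1-\theta_n)\to\infty$ (as the paper does) and check that $m^{-1}\sum_i i\,\ee(\tilde Y_1\tilde Y_{i+1})\sim (m(1-\theta_n^2))^{-1}\to0$ -- this works, but is more delicate. Second, and more substantively, the paper's truncation error $U_{k,l,n}-U_{k,l,m,n}=\theta_n\xi_k(X_{k+l-1,n}-X_{k+l-1,m,n})+\theta_n\xi_{k+l}(X_{k-1,n}-X_{k-1,m,n})$ consists exclusively of off-diagonal products $\xi_i\xi_j$ with $i\neq j$, so the maximal inequality of Lemma \ref{mdp-lem2}(1) (Lemma 17 of \cite{MaMl}) applies term by term; your error $\sum_k\bigl(X_{k+l,n}X_{k,n}-X_{k+l,n}^{(m)}X_{k,n}^{(m)}\bigr)$, once expanded, also produces diagonal terms $\xi_j^2$ whose means must be cancelled against $\ee(X_{k+l,n}X_{k,n})-\ee(X_{k+l,n}^{(m)}X_{k,n}^{(m)})$ and then controlled by a separate Bernstein-type bound for centered $\xi_j^2$, an extra case your proposal does not mention. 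None of this is fatal -- your variance computation, the identification of the rate $r^2/(8(\ee\xi_0^2)^2)$, the observation that the fourth-cumulant contribution is of lower order $O((1-\theta_n^2)^{-2})$ and hence washes out, and the calibration constraints on $m_n$ are all consistent with the paper's choice $m=(1-\theta_n)^{-6/5}$ in (\ref{m-p}) -- but the paper's AR representation is the structural device that makes the estimates clean, and it is the one genuinely new idea relative to your outline.
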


\begin{rem}
Since $M$ is fixed and $l$ is finite, then the form
(\ref{mdp-thm-11}) is equivalent to
\begin{equation}\label{mdp-thm-12}
\lim_{n\to\infty}\frac{1}{b_n^2}\log\pp\left(\frac{(1-\theta_n^2)^{3/2}\sqrt{n-l}}{b_n}\left|C^{*}_{l,n}-\ee
C_{l,n}^{*}\right|\ge r\right)
 =-\frac{r^2}{8(\ee\xi_0^2)^2}.
 \end{equation}
In the process of proving Theorem \ref{mdp-thm1}, we often use the
form (\ref{mdp-thm-12}) in order to avoid extra explanation.
\end{rem}

The following result supplies a moderate deviation for the linear
combination of the empirical covariance. For the case that the
unknown parameter $\theta_n$ is fixed ($\theta_n=\theta$), we can
succeed in obtaining the moderate deviation of the parameter
estimators $\hat\theta_n, \tilde\theta_n$ by utilizing the following
result directly.

\begin{thm}\label{mdp-thm2} Under the conditions in Theorem \ref{mdp-thm1}, for any $n$ and $0\le
l\le M$, let $\{a_{l,n}\}$ be a sequences of real numbers with
$\lim\limits_{n\to\infty}a_{l,n}=a_l$, and assume that $a_l\ne 0$
for some $0\le l\le M$. Then for any $r>0$, we have
$$
\lim_{n\to\infty}\frac{1}{b_n^2}\log\pp\left(\frac{(1-\theta_n^2)^{3/2}\sqrt{n}}{b_n}\left|\sum_{l=0}^Ma_{l,n}(C^{*}_{l,n}-\ee
C_{l,n}^{*})\right|\ge r\right)
 =-\frac{r^2}{2\Sigma^2}
$$
where
$$
\Sigma^2=4\left(\sum_{j=0}^Ma_j\right)^2(\ee\xi_0^2)^2.
$$
\end{thm}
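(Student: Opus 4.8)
The plan is to follow the same route used for Theorem~\ref{mdp-thm1}, exploiting that the number of lags $M$ is fixed and finite, so that the linear combination is again a single normalized quadratic functional of the noise $(\xi_k)$. Write
\[
Y_{l,n}:=\frac{(1-\theta_n^2)^{3/2}\sqrt{n}}{b_n}\bigl(C^{*}_{l,n}-\ee C_{l,n}^{*}\bigr),
\qquad
T_n:=\sum_{l=0}^{M}a_{l,n}Y_{l,n}.
\]
First I would remove the $n$-dependence of the coefficients. Since $a_{l,n}\to a_l$, the difference between $T_n$ and $\sum_l a_l Y_{l,n}$ is $\sum_l(a_{l,n}-a_l)Y_{l,n}$, and with $\varepsilon_n:=\max_l|a_{l,n}-a_l|\to0$ one has, for every $\delta>0$,
\[
\limsup_{n\to\infty}\frac{1}{b_n^2}\log\pp\Bigl(\Bigl|\sum_{l=0}^{M}(a_{l,n}-a_l)Y_{l,n}\Bigr|\ge\delta\Bigr)=-\infty.
\]
Indeed the event forces some $|Y_{l,n}|\ge\delta/((M+1)\varepsilon_n)$, and since $\delta/((M+1)\varepsilon_n)\ge A$ eventually for every fixed $A$, the moderate deviation upper bound of Theorem~\ref{mdp-thm1} bounds the limsup by $-A^2/(8(\ee\xi_0^2)^2)$; letting $A\to\infty$ gives $-\infty$. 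Hence the two families are exponentially equivalent at speed $b_n^2$ and it suffices to treat the fixed coefficients $a_l$.

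Next, mirroring the proof of Theorem~\ref{mdp-thm1}, I would insert the stationary representation $X_{k,n}=\sum_{p\ge0}\theta_n^p\xi_{k-p}$, truncate the series at a level $m_n\to\infty$ chosen as in Section~3, and thereby replace $\sum_l a_l(C^*_{l,n}-\ee C^*_{l,n})$ by a centered $m_n$-dependent triangular array whose discarded moving-average tail is again exponentially negligible at speed $b_n^2$. The moderate deviation principle for $m$-dependent sequences developed in Section~3 then applies and yields a Gaussian moderate deviation for $T_n$ with rate $r^2/(2\Sigma^2)$, where $\Sigma^2=\lim_{n\to\infty}b_n^2\,\mathrm{Var}(T_n)$. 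Everything thus reduces to identifying this limiting variance; because the covariance structure will turn out to be degenerate, I would work directly with the scalar $T_n$ rather than passing through a joint rate function and a contraction principle, which would require inverting a singular matrix.

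The heart of the argument, and the step I expect to be the main obstacle, is this variance computation. Writing $\gamma_n(h):=\ee X_{0,n}X_{h,n}=\theta_n^{|h|}(1-\theta_n^2)^{-1}\ee\xi_0^2$, one expands
\[
b_n^2\,\mathrm{Var}(T_n)=n(1-\theta_n^2)^3\sum_{l,l'=0}^{M}a_l a_{l'}\,\mathrm{Cov}\bigl(C^{*}_{l,n},C^{*}_{l',n}\bigr),
\]
and each covariance of products splits into a Wick-type part built from $\gamma_n$ and a part proportional to $\mathrm{cum}_4(\xi_0)$. A direct estimate shows the fourth-cumulant part carries one fewer factor of $(1-\theta_n^2)^{-1}$ and so is negligible under the scaling, which is why only $(\ee\xi_0^2)^2$ survives. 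For the Wick part one uses $\sum_{h\in\zz}\gamma_n(h)^2\sim2(\ee\xi_0^2)^2(1-\theta_n^2)^{-3}$ together with the two admissible contractions; since the fixed lag shifts enter only through factors $\theta_n^{O(1)}\to1$, the scaled covariance converges to the common value $4(\ee\xi_0^2)^2$ for \emph{every} pair $(l,l')$, including $l=l'$. In other words, the normalized empirical covariances at distinct lags become asymptotically perfectly correlated as $\theta_n\to1$, so the limiting covariance matrix is the rank-one matrix $4(\ee\xi_0^2)^2\,\mathbf{1}\mathbf{1}^{\top}$. Substituting into the double sum gives
\[
\Sigma^2=4(\ee\xi_0^2)^2\Bigl(\sum_{l=0}^{M}a_l\Bigr)^2,
\]
the claimed value; taking a single nonzero coefficient recovers the rate $r^2/(8(\ee\xi_0^2)^2)$ of Theorem~\ref{mdp-thm1}, a useful consistency check.
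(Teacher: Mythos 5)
Your architecture is the same as the paper's (truncation to an $m$-dependent triangular array with $m=m(n)$, the Section~3 moderate deviation lemma for unbounded $m$, exponential negligibility of the discarded tail, and identification of a degenerate rank-one limiting covariance giving $\Sigma^2=4(\sum_l a_l)^2(\ee\xi_0^2)^2$), and your key structural observation --- that $n(1-\theta_n^2)^3\,\mathrm{Cov}(C^*_{l,n},C^*_{l',n})\to 4(\ee\xi_0^2)^2$ for \emph{every} pair $(l,l')$, so the lags become asymptotically perfectly correlated --- is exactly what drives the paper's answer. Where you diverge is in execution. First, you freeze the coefficients by proving exponential equivalence of $\sum_l a_{l,n}Y_{l,n}$ and $\sum_l a_l Y_{l,n}$ via the upper bound of Theorem~\ref{mdp-thm1}; this is a correct and clean reduction, but the paper skips it entirely and carries the $a_{l,n}$ through, using $a_{l,n}\to a_l$ only when passing to the limit in the variance. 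Second, you compute the limiting variance by a direct Wick/fourth-cumulant expansion of $\mathrm{Cov}(C^*_{l,n},C^*_{l',n})$ using $\gamma_n(h)=\theta_n^{|h|}(1-\theta_n^2)^{-1}\ee\xi_0^2$ and $\sum_h\gamma_n(h)^2\sim 2(\ee\xi_0^2)^2(1-\theta_n^2)^{-3}$, whereas the paper routes everything through the autoregressive representation $Z_{k,l,n}=\theta_n^2Z_{k-1,l,n}+U_{k,l,n}$ of Lemma~\ref{lem1} and then evaluates the covariances of the truncated innovations $U_{k,l,m,n}$ case by case (Propositions~\ref{prop-U}--\ref{prop-UUU}); your version is more compact and makes the rank-one structure transparent, while the paper's version produces exactly the quantities demanded by condition (D) of Lemma~\ref{lem-miao}. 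That last point is the one place you should be careful: what the $m$-dependent lemma needs is not literally $\lim b_n^2\mathrm{Var}(T_n)$ but the block-variance limit $\lim m^{-1}\mathrm{Var}(\hat Y_{1,m,n}+\cdots+\hat Y_{m,m,n})$ and the vanishing of $m^{-1}\sum_i i\,\ee(\hat Y_{1,m,n}\hat Y_{i+1,m,n})$ for the \emph{truncated} array, plus the tail conditions (B) and (C) for the truncated products; these all follow from the same estimates (your Wick computation localizes to blocks of length $m$ since $m(1-\theta_n)\to\infty$), but they must be stated for the right object. You should also dispose of the $O(M^2)$ edge terms coming from the ragged summation ranges $\sum_{k=1}^{n-l}$ over different $l$, which the paper handles separately in (\ref{mdp-thm2-p3}); this is routine via the argument of Lemma~\ref{mdp-lem1}.
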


\begin{rem} Under the conditions of Theorem \ref{mdp-thm2}, if
$\sum_{j=0}^Ma_j=1$, then we have
$$
\lim_{n\to\infty}\frac{1}{b_n^2}\log\pp\left(\frac{(1-\theta_n^2)^{3/2}\sqrt{n}}{b_n}\left|\sum_{l=0}^Ma_{l,n}(C^{*}_{l,n}-\ee
C_{l,n}^{*})\right|\ge r\right)
 =-\frac{r^2}{8(\ee\xi_0^2)^2}.
$$
In particular, Theorem \ref{mdp-thm1} holds, if there exists some
$0<l<M$, such that
$$
a_k=\begin{cases} 1,& \ k=l\\
0, &\ k\ne l
\end{cases}.
$$

\end{rem}

\subsection{Applications}
In the subsection, we provide a statistical application. More
precisely, we shall apply the method of proving Theorem
\ref{mdp-thm1} and \ref{mdp-thm2} to the least squares estimator
$\hat\theta_n^2$ and the Yule-Walker estimator $\tilde{\theta}_n$.

\begin{prop}\label{mdp-app}
Assume that the conditions (1), (2) and (3) are satisfied, then for
any $r>0$, we have
$$
\lim_{n\to\infty}\frac{1}{b_n^2}\log \pp\left(\frac{
\sqrt{n}}{b_n(1-\theta_n^2)^{1/2}}|\hat\theta_n-\theta_n|\ge
r\right)=-\frac{r^2}{2}
$$
and
$$
\lim_{n\to\infty}\frac{1}{b_n^2}\log \pp\left(\frac{
\sqrt{n}}{b_n(1-\theta_n^2)^{1/2}}|\tilde\theta_n-\theta_n|\ge
r\right)=-\frac{r^2}{2}.
$$
\end{prop}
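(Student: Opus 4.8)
The plan is to reduce both estimators to a single martingale ratio and to extract the moderate deviations from that martingale. For the least squares estimator, substituting $X_{k,n}=\theta_n X_{k-1,n}+\xi_k$ into \eqref{LS} gives the exact identity
$$\hat\theta_n-\theta_n=\frac{N_n}{D_n},\qquad N_n:=\sum_{k=1}^n\xi_k X_{k-1,n},\quad D_n:=\sum_{k=1}^n X_{k-1,n}^2.$$
From the stationary representation $X_{k,n}=\sum_{p\ge 0}\theta_n^p\xi_{k-p}$ one computes $\ee C_{0,n}^*=\ee\xi_0^2/(1-\theta_n^2)$ and $\ee C_{1,n}^*=\theta_n\ee\xi_0^2/(1-\theta_n^2)=\theta_n\ee C_{0,n}^*$, so that $\ee D_n=n\ee\xi_0^2/(1-\theta_n^2)$; moreover, since $\xi_k$ is independent of $X_{k-1,n}$, the martingale $N_n$ has variance $\ee\xi_0^2\,\ee D_n=n(\ee\xi_0^2)^2/(1-\theta_n^2)$. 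Thus the natural centering of the numerator is automatic ($\ee C_{1,n}^*-\theta_n\ee C_{0,n}^*=0$), the normalization of $N_n/D_n$ is exactly the one in the statement, and the target $r^2/2$ is the Gaussian rate at variance one.

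The first step is to freeze the denominator. Writing $D_n=nC_{0,n}^*+X_{0,n}^2-X_{n,n}^2$, Theorem \ref{mdp-thm1} with $l=0$ shows that $(1-\theta_n^2)^{3/2}\sqrt n\,b_n^{-1}(C_{0,n}^*-\ee C_{0,n}^*)$ obeys an MDP; since $\ee C_{0,n}^*$ is of order $(1-\theta_n^2)^{-1}$, the relative error $(C_{0,n}^*-\ee C_{0,n}^*)/\ee C_{0,n}^*$ is of order $b_n/((1-\theta_n^2)^{1/2}\sqrt n)$, which tends to $0$ by condition (3) (indeed $(1-\theta_n)^{1/2}\ge(1-\theta_n)^2$ on $[0,1)$, so $\sqrt n(1-\theta_n^2)^{1/2}/b_n\to\infty$). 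Hence $D_n/\ee D_n\to 1$ super-exponentially on the scale $b_n^2$, and a standard exponential-equivalence (Slutsky-type) argument lets us replace $D_n$ by $\ee D_n=n\ee\xi_0^2/(1-\theta_n^2)$. After this replacement the claim for $\hat\theta_n$ reduces to showing that
$$\frac{(1-\theta_n^2)^{1/2}}{b_n\sqrt n\,\ee\xi_0^2}\,N_n$$
satisfies an MDP with rate $r^2/2$.

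The heart of the proof, and the main obstacle, is this MDP for $N_n$. One is tempted to write $N_n=n(C_{1,n}^*-\theta_n C_{0,n}^*)$ up to negligible boundary terms and invoke Theorem \ref{mdp-thm2} with $M=1$, $a_{1,n}=1$, $a_{0,n}=-\theta_n$; but then $a_0+a_1=-1+1=0$, so $\Sigma^2=0$ and the theorem is vacuous. This degeneracy is not accidental: the leading-order fluctuations of $C_{1,n}^*$ and $\theta_n C_{0,n}^*$ cancel, and the surviving fluctuation of $N_n$ is genuinely of smaller order (by a factor $(1-\theta_n^2)$) than Theorem \ref{mdp-thm2} would predict. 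I would therefore treat $N_n=\sum_{k=1}^n\xi_k X_{k-1,n}$ directly as a martingale and reprove its MDP by the method of Section 3: truncate $X_{k-1,n}$ to $X_{k-1,n}^{(m)}=\sum_{p=0}^m\theta_n^p\xi_{k-1-p}$, so that $\{\xi_k X_{k-1,n}^{(m)}\}_k$ is an $(m+1)$-dependent martingale-difference triangular array; apply the moderate deviation for $m$-dependent sequences with unbounded $m$ developed in Section 3, whose per-term variance parameter is $\ee\xi_0^2\,\ee(X_{0,n}^{(m)})^2=(\ee\xi_0^2)^2(1-\theta_n^{2(m+1)})/(1-\theta_n^2)$; and finally show that the truncation tail $\sum_k\xi_k(X_{k-1,n}-X_{k-1,n}^{(m)})$ is exponentially negligible at scale $b_n^2$ uniformly, letting $m\to\infty$ to recover the full variance $n(\ee\xi_0^2)^2/(1-\theta_n^2)$. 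Crucially, the predictable quadratic variation of $N_n$ equals $\ee\xi_0^2\,D_n$, so the quadratic-variation control required for the martingale MDP is exactly the denominator concentration already proved; this is what welds the two halves together and yields the Gaussian rate $r^2/2$.

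For the Yule-Walker estimator, the same substitution in \eqref{YW} gives $\tilde\theta_n-\theta_n=(N_n-\theta_n X_{n,n}^2)/(D_n+X_{n,n}^2)$, which differs from $N_n/D_n$ only through the single boundary term $X_{n,n}^2$. Since $\ee X_{n,n}^2$ is of order $(1-\theta_n^2)^{-1}$ while $D_n$ and $N_n$ are sums of $n$ comparable terms, the contribution of $X_{n,n}^2$ is negligible on the MDP scale, and the Gaussian integrability in condition (2) makes its own large deviations super-exponentially small at scale $b_n^2$. Hence $\tilde\theta_n-\theta_n$ is exponentially equivalent to $\hat\theta_n-\theta_n$ and inherits the same MDP, which proves the proposition.
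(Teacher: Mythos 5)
Your proposal is correct and follows essentially the same route as the paper: the denominator is frozen using Theorem \ref{mdp-thm1} with $l=0$ together with $\sqrt{n(1-\theta_n^2)}/b_n\to\infty$, and the numerator $\sum_{k=1}^n\xi_kX_{k-1,n}$ (for which Theorem \ref{mdp-thm2} degenerates, as you correctly diagnose) is handled by truncating $X_{k-1,n}$ to a finite sum, applying the $m$-dependent MDP of Lemma \ref{lem-miao} to the resulting stationary array with limiting variance one, and killing the tail $\theta_n^m\sum_p\theta_n^p\sum_kW_{k,m+p+1}$ exactly as in Lemma \ref{mdp-lem2}. The Yule-Walker case is likewise reduced to the same quantities modulo the negligible boundary term $X_{n,n}^2$, so the argument matches the paper's Lemmas \ref{lem6-1} and \ref{lem6-2}.
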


\begin{rem} A recent paper by Giraitis and Phillips \cite{GiPh} (also see, Phillips and Magdalinos
\cite{Ph-Ma07}), established the asymptotic distribution of the
least square estimator $\hat\theta_n$ in a stationary first-order AR
model when $n(1-\theta_n)\to \infty$, i.e.,
$$
(1-\theta_n^2)^{-1/2}n^{1/2}(\hat\theta_n-\theta_n)\xrightarrow{d}
N(0,1).
$$
\end{rem}
\begin{rem}
For the case of $\theta_n\equiv\theta\in (-1,1)$, Djellout et al.
\cite{DGW06} derived the moderate deviations of $\hat\theta_n$ and
$\tilde\theta_n$ as a consequence of their general results on the
moderate deviation of moving average processes, but with an extra
and strong condition that the law of $\xi_0$ satisfies a log-Sobolev
inequality (though their method go far beyond the regression model).
In \cite{MS}, the authors gave the moderate deviations of
$\hat\theta_n$ and $\tilde\theta_n$, where they removed the
assumption of log-Sobolev inequality on the driven variable.
\end{rem}
\section{Some preparations and auxiliary results}

\subsection{Autoregressive representation for the covariance
process} For any $n$, by the stationarity
 of $X_{k,n}$ ($k=0,1,\cdots, n$), the distribution law of $X_{k+l,n}X_{k,n}$ is the same with
$X_{l,n}X_{0,n}$. For any $0\le l\le M$, let $C_{l,n}:=\ee
X_{k+l,n}X_{k,n}$ and it is easy to check that
 \begin{equation}\label{cl}
  C_{l,n}=\theta_n^l\ee X_{0,n}^2=\theta_n^l\sum_{k=0}^\infty\theta_n^{2k}\ee\xi_{0}^2
  =\ee C^{*}_{l,n}
 \end{equation}
where $C^{*}_{l,n}$ is defined in (\ref{C}). In addition, let
\begin{equation}\label{zu}
 Z_{k,l,n}=X_{k+l,n}X_{k,n}-C_{l,n},\ \ U_{k,l,n}=\theta_n X_{k+l-1,n}\xi_{k}+\theta_n\xi_{k+l}X_{k-1,n}
 +\xi_{k+l}\xi_k-\theta_n^l\ee\xi_0^2.
\end{equation}
We have the following autoregressive representation for the
covariance process.

 \begin{lem}\label{lem1} Under the above notions, for any $n> l$, we have
\begin{equation}\label{lem1-1}
 Z_{k,l,n}=\theta_n^2 Z_{k-1,l,n}+U_{k,l,n}, \ \ k=1,\cdots, n-l,
\end{equation}
and
 \begin{equation}\label{lem1-2}
  C_{l,n}^{*}-C_{l,n}=\frac{\bar
  U_{l,n}}{(1-\theta_n^2)}+\frac{\theta_n^2(Z_{0,l,n}-Z_{n-l,l,n})}{(n-l)(1-\theta_n^2)},
\end{equation}
where
$$
\bar
  U_{l,n}=\frac{1}{n-l}\sum_{k=1}^{n-l} U_{k,l,n}.
$$
 \end{lem}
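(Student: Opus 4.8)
The plan is to verify both identities by direct algebraic manipulation of the model equation \eqref{M}, the only nontrivial inputs being the closed form of $C_{l,n}$ recorded in \eqref{cl} and a careful bookkeeping of an index shift.

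First I would establish the recursion \eqref{lem1-1}. The idea is to substitute $X_{k+l,n}=\theta_n X_{k+l-1,n}+\xi_{k+l}$ and $X_{k,n}=\theta_n X_{k-1,n}+\xi_k$ into the product $X_{k+l,n}X_{k,n}$ and expand. This produces the leading term $\theta_n^2 X_{k+l-1,n}X_{k-1,n}$ together with the three cross/noise terms $\theta_n X_{k+l-1,n}\xi_k+\theta_n\xi_{k+l}X_{k-1,n}+\xi_{k+l}\xi_k$ already appearing in \eqref{zu}. Rewriting $X_{k+l-1,n}X_{k-1,n}=Z_{k-1,l,n}+C_{l,n}$ and subtracting $C_{l,n}$ from both sides, the definition of $Z_{k,l,n}$ turns the identity into
\[
Z_{k,l,n}=\theta_n^2 Z_{k-1,l,n}+(\theta_n^2-1)C_{l,n}+\theta_n X_{k+l-1,n}\xi_k+\theta_n\xi_{k+l}X_{k-1,n}+\xi_{k+l}\xi_k.
\]
The single point to check is that the constant contribution $(\theta_n^2-1)C_{l,n}$ collapses to $-\theta_n^l\ee\xi_0^2$; this is immediate from \eqref{cl}, since $C_{l,n}=\theta_n^l\ee\xi_0^2/(1-\theta_n^2)$, and it yields exactly the final term in the definition of $U_{k,l,n}$, establishing \eqref{lem1-1}.

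Next I would derive the closed form \eqref{lem1-2} by summing the recursion over $k=1,\dots,n-l$. Writing $S:=\sum_{k=1}^{n-l}Z_{k,l,n}$ and using the reindexing $\sum_{k=1}^{n-l}Z_{k-1,l,n}=Z_{0,l,n}+S-Z_{n-l,l,n}$, which introduces the two boundary terms, the summed recursion becomes
\[
(1-\theta_n^2)S-\theta_n^2\bigl(Z_{0,l,n}-Z_{n-l,l,n}\bigr)=\sum_{k=1}^{n-l}U_{k,l,n}=(n-l)\bar U_{l,n}.
\]
Then I would identify $S$ with the quantity of interest: since $Z_{k,l,n}=X_{k+l,n}X_{k,n}-C_{l,n}$ and $C_{l,n}^{*}=(n-l)^{-1}\sum_{k=1}^{n-l}X_{k+l,n}X_{k,n}$ by \eqref{C}, one has $S=(n-l)(C_{l,n}^{*}-C_{l,n})$. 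Substituting this and dividing through by $(n-l)(1-\theta_n^2)$ produces \eqref{lem1-2}.

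The computation is elementary throughout, so there is no serious obstacle; the only place demanding care is the reindexing $\sum_{k=1}^{n-l}Z_{k-1,l,n}=Z_{0,l,n}+S-Z_{n-l,l,n}$, which is precisely what isolates the boundary correction $\theta_n^2(Z_{0,l,n}-Z_{n-l,l,n})/\bigl((n-l)(1-\theta_n^2)\bigr)$ in \eqref{lem1-2}. Getting the sign of this term and its two endpoints correct is the crux of the bookkeeping, and it is worth noting in passing that this correction is exactly the remainder that the subsequent analysis will have to show is negligible on the moderate deviation scale.
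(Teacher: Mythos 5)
Your derivation is correct: the substitution of the model equation into $X_{k+l,n}X_{k,n}$, the collapse of $(\theta_n^2-1)C_{l,n}$ to $-\theta_n^l\ee\xi_0^2$ via \eqref{cl}, and the telescoping reindexation after summation are all sound and give exactly \eqref{lem1-1} and \eqref{lem1-2}. The paper omits this proof as ``easy,'' and your argument is precisely the standard computation it has in mind, so there is nothing to compare beyond noting that you have supplied the details faithfully.
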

\begin{proof}
 The proof of the lemma is easy, so omitted.
\end{proof}

\begin{lem}\label{mdp-lem1}
  Under the assumptions of Theorem \ref{mdp-thm1}, for any $0\le l\le M$ and $r>0$, we have
  $$
  \lim_{n\to\infty}\frac{1}{b_n^2}\log \pp\left(\frac{\sqrt{1-\theta_n^2}|Z_{0,l,n}-Z_{n-l,l,n}|}{b_n\sqrt{n-l}}\ge r\right)=-\infty.
  $$
   \end{lem}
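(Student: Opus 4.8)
The plan is to bound the probability by splitting off the two boundary terms and then exploiting the sub-Gaussian tails of the stationary process $(X_{k,n})_k$. First I would exploit stationarity: since $(X_{k,n})_{0\le k\le n}$ is stationary, the pairs $(X_{0,n},X_{l,n})$ and $(X_{n-l,n},X_{n,n})$ have the same law, so $Z_{0,l,n}$ and $Z_{n-l,l,n}$ are identically distributed. Writing $t_n=\frac{rb_n\sqrt{n-l}}{\sqrt{1-\theta_n^2}}$ for the relevant threshold, the triangle inequality and a union bound give
$$
\pp\left(|Z_{0,l,n}-Z_{n-l,l,n}|\ge t_n\right)\le 2\,\pp\left(|Z_{0,l,n}|\ge t_n/2\right).
$$
The constant $\log 2$ is negligible after division by $b_n^2$, so it suffices to control $\pp\left(|Z_{0,l,n}|\ge t_n/2\right)$.

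Next I would discard the deterministic part. Since $Z_{0,l,n}=X_{l,n}X_{0,n}-C_{l,n}$ with $0\le C_{l,n}=\theta_n^l\,\ee X_{0,n}^2\le\frac{\ee\xi_0^2}{1-\theta_n^2}$, and since $t_n$ grows strictly faster than $(1-\theta_n^2)^{-1}$ — indeed $t_n(1-\theta_n^2)=rb_n\sqrt{n-l}\sqrt{1-\theta_n^2}\to\infty$ by condition (3) — for all large $n$ one has $t_n/2-C_{l,n}\ge t_n/4$, so that $\pp\left(|Z_{0,l,n}|\ge t_n/2\right)\le\pp\left(|X_{l,n}X_{0,n}|\ge t_n/4\right)$. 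Using $|X_{l,n}X_{0,n}|\le\tfrac12(X_{l,n}^2+X_{0,n}^2)$, a union bound, and stationarity once more, this is at most $2\,\pp\left(X_{0,n}^2\ge t_n/4\right)=2\,\pp\left(|X_{0,n}|\ge\tfrac12\sqrt{t_n}\right)$.

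The heart of the argument is a sub-Gaussian tail bound for $X_{0,n}$ that is uniform in $n$ with the correct dependence on $1-\theta_n^2$. The Gaussian integrability condition (2) together with $\ee\xi_0=0$ yields a constant $\rho>0$ with $\ee e^{s\xi_0}\le e^{\rho s^2}$ for all $s\in\rr$; feeding this into the stationary representation $X_{0,n}=\sum_{p\ge0}\theta_n^p\xi_{-p}$ and multiplying the moment generating functions of the independent summands gives
$$
\ee e^{sX_{0,n}}\le\exp\Big(\rho s^2\sum_{p\ge0}\theta_n^{2p}\Big)=\exp\Big(\frac{\rho s^2}{1-\theta_n^2}\Big),
$$
whence, by the Chernoff bound, $\pp(|X_{0,n}|\ge u)\le 2\exp\big(-\tfrac{(1-\theta_n^2)u^2}{4\rho}\big)$. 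Taking $u=\tfrac12\sqrt{t_n}$ produces an exponent of order $-\frac{(1-\theta_n^2)t_n}{16\rho}=-\frac{rb_n\sqrt{n-l}\sqrt{1-\theta_n^2}}{16\rho}$. Dividing by $b_n^2$ and letting $n\to\infty$, the limit is $-\infty$ provided $\frac{\sqrt{n}\sqrt{1-\theta_n^2}}{b_n}\to\infty$; this follows from condition (3), since $\frac{\sqrt n\sqrt{1-\theta_n^2}}{b_n}\ge\frac{\sqrt n(1-\theta_n)^2}{b_n}\,(1-\theta_n)^{-3/2}$ and both factors diverge.

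I expect the main obstacle to be the step establishing the sub-Gaussian moment generating function bound uniformly in $n$: one must convert the qualitative Gaussian integrability $\ee e^{\alpha\xi_0^2}<\infty$ into the quantitative estimate $\ee e^{s\xi_0}\le e^{\rho s^2}$ valid for \emph{all} $s\in\rr$, and justify the interchange of expectation with the infinite product (e.g. by applying the bound to partial sums and passing to the limit via monotone convergence). Once the $1/(1-\theta_n^2)$ scaling of the variance proxy is correctly captured, the divergence of the exponent is precisely the content of the moderate-deviation scale condition (3), and the remaining estimates are routine.
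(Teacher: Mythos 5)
Your proposal is correct, but it reaches the tail estimate by a genuinely different route from the paper. The paper first observes that in the difference $Z_{0,l,n}-Z_{n-l,l,n}$ the centering constants $C_{l,n}$ cancel outright, so it passes directly to $4\,\pp\bigl(X_{0,n}^2\ge t_n/2\bigr)$ without your intermediate step of checking that $C_{l,n}=o(t_n)$ (your step is valid, just slightly roundabout). The real divergence is in how $\pp(X_{0,n}^2\ge u)$ is controlled: the paper uses the Cauchy--Schwarz bound $X_{0,n}^2\le\frac{1}{1-\theta_n}\sum_{p}\theta_n^p\xi_{-p}^2$, applies Markov's inequality to $e^{\lambda_n X_{0,n}^2}$ with the shrinking parameter $\lambda_n=\alpha(1-\theta_n)^2$, and controls $\ee e^{\lambda_n X_{0,n}^2}\le\ee e^{\alpha\xi_0^2}$ by Jensen's inequality applied to the probability weights $(1-\theta_n)\theta_n^p$; this is entirely elementary, never uses $\ee\xi_0=0$, and produces an exponent of order $b_n\sqrt{n}\,(1-\theta_n)^{3/2}$. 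You instead establish a genuine sub-Gaussian tail for $X_{0,n}$ with variance proxy $\rho/(1-\theta_n^2)$ via the linear moment generating function, which does use centeredness and requires the (standard but not free) conversion of $\ee e^{\alpha\xi_0^2}<\infty$ into $\ee e^{s\xi_0}\le e^{\rho s^2}$ for all $s$, plus the interchange of expectation with the infinite product. Your reward is a sharper exponent of order $b_n\sqrt{n}\,(1-\theta_n)^{1/2}$, which diverges faster; under condition (3) both rates suffice, so the extra sharpness buys nothing here, while the paper's Jensen trick is shorter and needs weaker inputs. Your closing reduction $\frac{\sqrt{n}\sqrt{1-\theta_n^2}}{b_n}\ge\frac{\sqrt{n}(1-\theta_n)^2}{b_n}(1-\theta_n)^{-3/2}\to\infty$ is correct.
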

\begin{proof} For every $n$, from the stationarity of $\{X_{k,n}\}_{0\le k\le n}$, we have
 $$
 \aligned
 & \pp\left(|Z_{0,l,n}-Z_{n-l,l,n}|\ge \frac{rb_n\sqrt{n-l}}{\sqrt{1-\theta_n^2}}\right)\\
= & \pp\left(|X_{l,n}X_{0,n}-X_{n,n}X_{n-l,n}|\ge \frac{rb_n\sqrt{n-l}}{\sqrt{1-\theta_n^2}}\right) \\
\le & 2\pp\left(|X_{0,n}X_{l,n}|\ge
\frac{rb_n\sqrt{n-l}}{2\sqrt{1-\theta_n^2}}\right)\\
 \le & 4\pp\left(X_{0,n}^2\ge
\frac{rb_n\sqrt{n-l}}{2\sqrt{1-\theta_n^2}}\right),
 \endaligned
 $$
where the last inequality follows from the well-known:
$$
2|X_{0,n}X_{l,n}|\le X_{0,n}^2+ X_{l,n}^2.
$$
 Now since
 $$
 X_{0,n}^2=\left(\sum_{p=0}^\infty\theta_n^p\xi_{-p}\right)^2
 \le \left(\sum_{p=0}^\infty\theta_n^p\right)\left(\sum_{p=0}^\infty\theta_n^p\xi_{-p}^2\right)
 =\frac{1}{1-\theta_n}\sum_{p=0}^\infty\theta_n^p\xi_{-p}^2
 $$
 and Markov's inequality, we have for $\lambda_n:=(1-\theta_n)^{2}\alpha$,
 $$
 \pp\left(X_{0,n}^2\ge \frac{rb_n\sqrt{n-l}}{2\sqrt{1-\theta_n^2}}\right)\le
 \exp\left(-\lambda_n r\frac{b_n\sqrt{n-l}}{2\sqrt{1-\theta_n^2}}\right)\ee e^{\lambda_n X_{0,n}^2}.
 $$
 But by Jensen's inequality,
 $$
  \ee e^{\lambda_n X_{0,n}^2}\le \ee\exp\left((1-\theta_n)\sum_{p=0}^\infty\theta_n^p\alpha\xi_{-p}^2\right)
  \le (1-\theta_n)\sum_{p=0}^\infty\theta_n^p\ee e^{\alpha\xi_{-p}^2}=\ee e^{\alpha\xi_{0}^2}.
 $$
 Summarizing the previous estimates we obtain
 $$
 \pp\left(|Z_{0,l,n}-Z_{n-l,l,n}|\ge \frac{rb_n\sqrt{n-l}}{\sqrt{1-\theta_n^2}}\right)\le
 4\exp\left(-\lambda_n r\frac{b_n\sqrt{n-l}}{2\sqrt{1-\theta_n^2}}\right)\ee e^{\alpha\xi_{0}^2},
 $$
 which yields the desired result by using the assumption
 $$
 b_n\to\infty,\ \ \ \frac{\sqrt n(1-\theta_n)^2}{b_n}\to\infty.
$$
\end{proof}

\subsection{Some properties of the sequence $\{U_{k,l,m,n}\}$.}

For all $n\ge 1$, $0\le l\le M$, $1\le k\le n-l$, $m>2M$, set
$$
X_{k-1,m,n}=\xi_{k-1}+\theta_n\xi_{k-2}+\cdots+\theta_n^{m-2}\xi_{k-m+1}=\sum_{j=0}^{m-2}\theta_n^j\xi_{k-1-j,}
$$
and
\begin{equation}\label{mdp-m-f}
\aligned
U_{k,l,m,n}=&\theta_nX_{k+l-1,m,n}\xi_k+\xi_{k+l}X_{k-1,m,n}\theta_n+\xi_{k+l}\xi_k-\theta_n^l\ee\xi_0^2\\
=&\sum_{j=1}^{m-1}\theta_n^j\xi_{k+l-j}\xi_k+\sum_{j=1}^{m-1}\xi_{k+l}\xi_{k-j}\theta_n^j+\xi_{k+l}\xi_k-\theta^l_n\ee\xi^2_0.
 \endaligned
\end{equation}

For any $n, l$, it is easy to see that $\{U_{k,l,m,n}\}_{1\le k\le
n-l}$ is a strictly stationary sequence with $m+l$-dependent
structure. Furthermore, the sequence $\{U_{k,l,m,n}\}_{1\le k\le
n-l}$ has the following properties.

\begin{prop}\label{prop-U}
\begin{enumerate}[{\rm i)}]
\item For any $0\le l\le M$, $1\le k\le n-l$,
  \begin{equation}\label{U-1}
\ee(U_{k,l,m,n})=0.
 \end{equation}

\item If $k\ne i$,
 \begin{equation}\label{U-2}
\ee(U_{k,0,m,n}U_{i,0,m,n})=0.
 \end{equation}
\item If $l\ne 0$, $k>i$,
 \begin{equation}\label{U-7}
 \aligned
 \ee(U_{k,l,m,n}U_{i,l,m,n})
 =
 \theta^{2l}_n(\ee\xi^2_0)^2\left(1_{A_1}+\sum_{q=0}^{m-1-2l}\theta_n^{2q}1_{A_2}\right)
\endaligned
 \end{equation}
 where the sets $A_1,A_2$ are defined by $A_1=\{i+l>k\}, A_2=\{i+l=k\}$.

 \item If $l\ne 0$,
\begin{equation}\label{U-8}
  \ee(U_{k,l,m,n}^2)=\left(\theta_n^{2l}\ee\xi_0^4+\left(1-2\theta_n^{2l}+2\sum_{j=1}^{m-1}\theta_n^{2j}\right)(\ee\xi_0^2)^2\right).
  \end{equation}

   \item
\begin{equation}
  \ee(U_{k,0,m,n}^2)=\ee\xi_0^4+(\ee\xi_0^2)^2\left[4\sum_{j=1}^{m-1}\theta_n^{2j}-1\right].
  \end{equation}
\end{enumerate}

\end{prop}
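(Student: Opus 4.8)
The plan is to decompose each $U_{k,l,m,n}$ into a \emph{diagonal} part and a sum of genuine cross terms, and then to reduce every expectation to the elementary moment rule for centered i.i.d.\ variables: a product $\xi_a\xi_b\xi_c\xi_d$ has zero expectation unless each index value occurs an even number of times, in which case it equals $(\ee\xi_0^2)^2$ when the indices form two distinct pairs and $\ee\xi_0^4$ when all four coincide; in particular any monomial in the $\xi$'s containing an index of multiplicity one has expectation zero. Writing $\sigma^2=\ee\xi_0^2$, I would first read off from (\ref{mdp-m-f}) that the only term of $U_{k,l,m,n}$ that is \emph{not} a product $\xi_a\xi_b$ with $a\neq b$ is the contribution of $j=l$ in the first sum, namely $\theta_n^l\xi_k^2$ (for $l\neq0$), respectively the term $\xi_k^2$ (for $l=0$). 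I therefore set
$$
U_{k,l,m,n}=D_k+V_k,\qquad D_k=\theta_n^l(\xi_k^2-\sigma^2),
$$
where $V_k$ collects all cross terms $\xi_a\xi_b$ with $a\neq b$. Part i) is then immediate: $\ee D_k=0$ by the definition of $\sigma^2$, and every cross term carries a factor $\xi_a$ of multiplicity one, so $\ee V_k=0$.

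For the covariances I would use repeatedly that $\ee[D_kD_i]=0$ for $k\neq i$ (independence) and that $\ee[D_kV_i]=0=\ee[V_kD_i]$: if $k\notin\{a,b\}$ the factor $\xi_k^2-\sigma^2$ is independent of $\xi_a\xi_b$ and centered, while if $k\in\{a,b\}$ the other index is distinct from $k$ and supplies a lone centered factor. Hence for $k\neq i$ every mixed product reduces to $\ee[V_kV_i]$, and each surviving contribution is a coefficient times $\sigma^4$ arising from a pair of cross terms sharing the same two-element index set. For part ii) ($l=0$, say $k>i$) the pairs in $V_k$ are the $\{k,k-j\}$, all containing $k$; since $k$ exceeds every index occurring in $V_i$, no pair of $V_k$ can coincide with one of $V_i$, so $\ee[V_kV_i]=0$ and (\ref{U-2}) follows.

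The main work is part iii), and the decisive observation is that for $k>i$ the index $k+l$ is the strict maximum among all indices entering $U_{k,l,m,n}$ and $U_{i,l,m,n}$. Consequently any cross pair of $V_k$ containing $k+l$ (exactly the pairs $\{k+l,k-j\}$ and $\{k+l,k\}$) is unmatchable in $V_i$, and only the pairs $\{k,k+l-j\}$ can contribute; a matching pair of $V_i$ must then contain $k$, which forces $i<k\le i+l$, i.e.\ one is on $A_1=\{k<i+l\}$ or $A_2=\{k=i+l\}$, the expectation vanishing otherwise. On $A_1$ the index $k$ occurs in $V_i$ only through the single pair $\{i,k\}$, matching the unique pair $\{k,i\}$ of $V_k$; tracking the coefficients $\theta_n^{\,l+(k-i)}$ and $\theta_n^{\,l-(k-i)}$ yields the single term $\theta_n^{2l}\sigma^4$. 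On $A_2$, where $k=i+l$, the index $k$ occurs in $V_i$ through the pair $\{k,i\}$ and through the pairs $\{k,i-j'\}$ from the second sum of $U_{i,l,m,n}$; matching these against the pairs $\{k,k+l-j\}$ of $V_k$ and summing the coefficients gives $\theta_n^{2l}\sigma^4\big(1+\sum_{j'=1}^{m-1-2l}\theta_n^{2j'}\big)=\theta_n^{2l}\sigma^4\sum_{q=0}^{m-1-2l}\theta_n^{2q}$, which is (\ref{U-7}). I expect the only delicate point to be the bookkeeping of precisely which $j,j'$ remain in the admissible range $\{1,\dots,m-1\}$, where the hypothesis $m>2M\ge 2l$ is used.

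Finally, parts iv) and v) are the case $k=i$ of the same computation. Since $\ee[D_kV_k]=0$, one has $\ee[U_{k,l,m,n}^2]=\ee[D_k^2]+\ee[V_k^2]$ with $\ee[D_k^2]=\theta_n^{2l}(\ee\xi_0^4-\sigma^4)$. Because the cross pairs of $V_k$ are pairwise distinct as index sets, $\ee[V_k^2]=\sigma^4\sum_P c_P^2$, where $c_P$ is the coefficient of the pair $P$; collecting the squared coefficients of the three families (the $\{k,k+l-j\}$ with $j\neq l$, the $\{k+l,k-j\}$, and $\{k+l,k\}$) gives $\sigma^4\big(1-\theta_n^{2l}+2\sum_{j=1}^{m-1}\theta_n^{2j}\big)$, and adding $\ee[D_k^2]$ produces (\ref{U-8}). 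For $l=0$ the single family $\{k,k-j\}$ carries coefficient $2\theta_n^j$, so $\ee[V_k^2]=4\sigma^4\sum_{j=1}^{m-1}\theta_n^{2j}$, and with $\ee[D_k^2]=\ee\xi_0^4-\sigma^4$ one obtains the expression in v). Throughout, the only facts used are independence, $\ee\xi_0=0$, and finiteness of $\ee\xi_0^4$, guaranteed by the Gaussian integrability in condition (2).
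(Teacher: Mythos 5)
Your proof is correct and reproduces exactly the formulas of the proposition; the one point you rightly flag as delicate --- that the matched exponents $l+(k-i)$, $2l$ and $2l+j'$ must lie in $\{1,\dots,m-1\}$ --- is precisely where the hypothesis $m>2M\ge 2l$ is consumed, and your bookkeeping there is sound. Your organization differs from the paper's in two respects. First, the paper expands $U_{k,l,m,n}$ into its four displayed summands $\Delta_1,\Delta_2,\Delta_3,\Delta_4$ and tabulates every pairwise expectation, whereas you split off the diagonal $D_k=\theta_n^l(\xi_k^2-\ee\xi_0^2)$ from the cross part $V_k$; this disposes of all $D$-contributions in one stroke and reduces iii)--v) to matching two-element index sets with their coefficients, which in particular makes the variance computations iv) and v) (a ten-term table in the paper) a single sum of squared coefficients over pairwise distinct pairs. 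Second, where the paper conditions on $\FF_{k+l-1}$ to discard the summands of $U_{k,l,m,n}$ containing $\xi_{k+l}$, you observe directly that $k+l$ is the strict maximum of all indices occurring in $U_{k,l,m,n}$ and $U_{i,l,m,n}$, so any pair containing it is unmatchable; these are two phrasings of the same reduction to the pairs $\{k,k+l-j\}$, and from that point on the two computations coincide. Net effect: the proofs are interchangeable, yours being slightly more systematic and the paper's more explicit about each individual expectation.
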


\begin{proof} Without loss of generality, we can assume that
$i<k$ and for any $k$, let
$$
\FF_{k}:=\sigma(\xi_{i}; -\infty<i\le k).
$$

{\bf Proof of i)}
 The claim
(\ref{U-1}) is easy to be obtained by the properties of conditional
expectation.

{\bf Proof of ii)}
 Since $\ee(U_{k,0,m,n}|\FF_{k-1})=0$, and
$U_{i,0,m,n}$ is measurable with respect to $\FF_{k-1}$, then we
have
$$
\aligned
 \ee(U_{k,0,m,n}U_{i,0,m,n})
 =&\ee[U_{i,0,m,n}\ee(U_{k,0,m,n}|\FF_{k-1})]=0.
\endaligned
$$

{\bf Proof of iii)} Let
$$
\Delta_{1,k,l}:=\sum_{j=1}^{m-1}\theta_n^j\xi_{k+l-j}\xi_k,\ \
\Delta_{2,k,l}:=\sum_{j=1}^{m-1}\xi_{k+l}\xi_{k-j}\theta_n^j.
$$
then it is easy to check that $U_{i,l,m,n}$ is measurable with
respect to $\FF_{k+l-1}$ and
$$
\ee(U_{k,l,m,n}|\FF_{k+l-1})=\Delta_{1,k,l}-\theta_n^l\ee\xi_0^2,
$$
So we have
$$
\ee(U_{k,l,m,n}U_{i,l,m,n})=\ee[U_{i,l,m,n}(\Delta_{1,k,l}-\theta_n^l\ee\xi_0^2)]=\ee(U_{i,l,m,n}\Delta_{1,k,l}).
$$
Next we need to calculate the following four terms:
$$
(1)\ \ee(\Delta_{1,k,l}\Delta_{1,i,l}),\ \ (2)\
\ee(\Delta_{1,k,l}\Delta_{2,i,l}),\ \ (3)\
\ee(\Delta_{1,k,l}\xi_{i+l}\xi_i),\ \ (4)\
\ee(\Delta_{1,k,l})\theta^l_n\ee\xi^2_0.
$$
First, we can observe that
\begin{itemize}
 \item when $i+l>k$, then there exists $1\le j,q\le m-1$
  such that \
  $$\ee(\xi_{k+l-j}\xi_k\xi_{i+l-q}\xi_i)\ne 0;$$
 \item when $i+l=k$, then there exists $1\le j,q\le m-1$
  such that \
  $$\ee(\xi_{k+l-j}\xi_k\xi_{i+l}\xi_{i-q})\ne 0;$$
   \item when $i+l=k$, then there exists $1\le j\le m-1$
  such that \
  $$\ee(\xi_{i+l}\xi_i\xi_{k+l-j}\xi_k)\ne 0.$$
\end{itemize}
Let $A_1=\{i+l>k\}, A_2=\{i+l=k\}$. Therefore, we have
$$
\ee(\Delta_{1,k,l}\Delta_{1,i,l})=\ee\left(\sum_{j=1}^{m-1}\theta_n^j\xi_{k+l-j}\xi_k\right)
\left(\sum_{q=1}^{m-1}\theta_n^q\xi_{i+l-q}\xi_i\right)=(\theta^l_n\ee\xi^2_0)^2(1+1_{A_1}),
$$
where we take $j=l=q$ and use the fact that under the case $i+l>k$,
we may choose $j=k+l-i, q=i+l-k$.
 Similarly, we have
$$
\ee(\Delta_{1,k,l}\Delta_{2,i,l})=\ee\left(\sum_{j=1}^{m-1}\theta_n^j\xi_{k+l-j}\xi_k\right)
\left(\sum_{q=1}^{m-1}\xi_{i+l}\xi_{i-q}\theta_n^q\right)=(\ee\xi_{0}^2)^21_{A_2}\sum_{q=1}^{m-1-2l}\theta_n^{2q+2l},
$$
$$
 \ee(\Delta_{1,k,l}\xi_{i+l}\xi_i)=\ee\left(\xi_{i+l}\xi_i\sum_{j=1}^{m-1}\theta_n^j\xi_{k+l-j}\xi_k\right)
 =(\theta^l_n\ee\xi^2_0)^21_{A_2}
$$
and
$$
\ee(\Delta_{1,k,l})\theta^l_n\ee\xi^2_0=(\theta^l_n\ee\xi^2_0)^2.
$$
From the above discussion and the definition of $U_{i,l,m,n}$, the
proof of iii) is completed.

{\bf Proofs of iv) and v)} Since
 $$
 \aligned
\ee U^2_{k,l,m,n}
=&\ee\left(\sum_{j=1}^{m-1}\theta_n^j\xi_{k+l-j}\xi_k+\sum_{j=1}^{m-1}\xi_{k+l}\xi_{k-j}\theta_n^j+\xi_{k+l}\xi_k-\theta^l_n\ee\xi^2_0\right)^2\\
=:&\ee(\Delta_1+\Delta_2+\Delta_3+\Delta_4)^2,
\endaligned
$$
then it is easy to see
$$
\aligned &
\ee(\Delta_1\Delta_3)=\ee(\Delta_2\Delta_3)=\ee(\Delta_2\Delta_4)=0,\\
& \ee\Delta_1^2=\begin{cases}
\theta_n^{2l}\ee\xi_0^4+(\ee\xi_0^2)^2\left(\sum_{j=1}^{m-1}\theta_n^{2j}-\theta_n^{2l}\right),\
\ & l\ne 0\\
 (\ee\xi_0^2)^2\sum_{j=1}^{m-1}\theta_n^{2j},\ \ & l=0,
\end{cases}\\
& \ee(\Delta_1\Delta_4)=\begin{cases} -\theta_n^{2l}(\ee\xi_0^2)^2,\
\ & l\ne 0\\
0,\ \ & l=0,
\end{cases}\\
& \ee(\Delta_3^2)=\begin{cases} (\ee\xi_0^2)^2,\
\ & l\ne 0\\
\ee\xi_0^4,\ \ & l=0,
\end{cases}\\
& \ee(\Delta_1\Delta_2)=\begin{cases} 0,\
\ & l\ne 0\\
(\ee\xi_0^2)^2\sum_{j=1}^{m-1}\theta_n^{2j},\ \ & l=0,
\end{cases}\\
& \ee(\Delta_3\Delta_4)=\begin{cases} 0,\
\ & l\ne 0\\
-(\ee\xi_0^2)^2,\ \ & l=0.
\end{cases}
\\
&
 \ee(\Delta_4^2)=\theta_n^{2l}(\ee\xi_0^2)^2,\
\ee(\Delta_2^2)=(\ee\xi_0^2)^2\sum_{j=1}^{m-1}\theta_n^{2j},\ \ \
\forall\ 0\le l\le M,
\endaligned$$
which yields the desired results.
\end{proof}

\begin{prop}\label{prop-UU} Let $1\le i<k$ and $0\le l,q\le M$.
\begin{enumerate}[{\rm (a)}]
\item If $l\ne 0$, then $\ee(U_{i,0,m,n}U_{k,l,m,n})=0$.

\item If $l\ne 0$, then $$\ee(U_{k,0,m,n}U_{i,l,m,n})=2\theta_n^l(\ee\xi_0^2)^2\left(1_{A_1}+1_{A_2}\sum_{j=0}^{m-1-l}\theta_n^{2j}\right)$$
where the events $A_1,A_2$ are defined in Proposition \ref{prop-U}.

\item If $0<l<q$, then
$$\ee(U_{i,l,m,n}U_{k,q,m,n})=\theta_n^{l+q}(\ee\xi_0^2)^2\left(1_{A_1}+1_{A_2}\sum_{j=0}^{m-1-(l+q)}\theta_n^{2j}\right).$$

\item If $0<q<l$, then
$$
\aligned
 \ee(U_{i,l,m,n}U_{k,q,m,n})
 =&\theta_n^{l+q}(\ee\xi_0^2)^2\left(1_{A_1}+1_{A_2}\sum_{j=0}^{m-1-(l+q)}\theta_n^{2j}\right)\\
 &+\theta_n^{l-q}(\ee\xi_0^2)^2\left(1_{E_1}+1_{E_2}\sum_{j=0}^{m-1-(l-q)}\theta_n^{2j}\right)
  \endaligned
  $$
where the events $E_1,E_2$ are defined by
$$
E_1=\{i+l>k+q\},\ \ \ E_2=\{i+l=k+q\}.
$$

\end{enumerate}
\end{prop}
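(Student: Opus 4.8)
The plan is to follow the template set by the proof of Proposition \ref{prop-U}. I would write each factor as $U_{k,q,m,n}=\Delta_{1,k,q}+\Delta_{2,k,q}+\Delta_{3,k,q}+\Delta_{4,k,q}$, where $\Delta_{1,k,q}=\sum_j\theta_n^j\xi_{k+q-j}\xi_k$, $\Delta_{2,k,q}=\sum_j\theta_n^j\xi_{k+q}\xi_{k-j}$, $\Delta_{3,k,q}=\xi_{k+q}\xi_k$ and $\Delta_{4,k,q}=-\theta_n^q\ee\xi_0^2$, expand the product $U_{i,l,m,n}U_{k,q,m,n}$ into its sixteen cross terms $\ee(\Delta_{a,i,l}\Delta_{b,k,q})$, and evaluate each using that the $\xi$'s are centered and i.i.d.: a product of four of them has nonzero mean only when the four time-indices split into two matched pairs, contributing $(\ee\xi_0^2)^2$. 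Because $i<k$ strictly, no single index can occur with multiplicity four, which is exactly why $\ee\xi_0^4$ is absent from all four formulas.

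The organizing observation is that each factor carries a \emph{top index} ($i+l$ for the first, $k+q$ for the second) that enters only through $\Delta_2,\Delta_3$; a strictly maximal top index appears to the first power and therefore forces a vanishing expectation. Hence the sorting events $A_1=\{i+l>k\}$, $A_2=\{i+l=k\}$ (and $E_1=\{i+l>k+q\}$, $E_2=\{i+l=k+q\}$ in part (d)) simply record the relative location of the two top indices. Whenever one factor's top index is strictly maximal I would condition on $\FF$ just below it, as in the proof of iii): since $\ee(U_{k,q,m,n}\mid\FF_{k+q-1})=\Delta_{1,k,q}+\Delta_{4,k,q}$ and $U_{i,l,m,n}$ is $\FF_{k+q-1}$-measurable whenever $i+l\le k+q-1$, the tower property together with $\ee U_{i,l,m,n}=0$ from i) gives $\ee(U_{i,l}U_{k,q})=\ee(U_{i,l}\,\Delta_{1,k,q})$. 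This yields (a) at once: with $q=l\ne 0$ the surviving factor $\Delta_{1,k,l}$ still contains the linear, unmatchable $\xi_k$ (as $i<k$), so every term vanishes.

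For part (c) one always has $i+l<k+q$, so the reduction applies and the task is to match indices in $\ee(U_{i,l}\,\Delta_{1,k,q})$: the diagonal term of $\Delta_{1,i,l}\Delta_{1,k,q}$ produces an event-free $\theta_n^{l+q}(\ee\xi_0^2)^2$ that is cancelled exactly by $\Delta_{4,i,l}\Delta_{1,k,q}$; the $A_1$-contribution arises from a second reindexing of $\Delta_{1,i,l}\Delta_{1,k,q}$; and the geometric sum on $A_2$ is assembled from the boundary ($j=0$) term of $\Delta_{3,i,l}\Delta_{1,k,q}$ together with $\Delta_{2,i,l}\Delta_{1,k,q}$. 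Part (b) ($q=0$) follows the same scheme, with conditioning killing the contribution on $\{i+l<k\}$ and direct matching handling $A_1\cup A_2$; the only new feature is the combinatorial factor $2$, which comes from the coincidence $\Delta_{1,k,0}=\Delta_{2,k,0}$ when the second factor has lag $0$.

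Part (d) is the genuinely new case and the main obstacle, because $q<l$ combined with $i<k$ permits $i+l>k+q$: on $E_1\cup E_2$ the first factor's top index is no longer dominated, $U_{i,l}$ fails to be $\FF_{k+q-1}$-measurable, and a single conditioning step no longer captures everything. Here I would split according to the event (conditioning on $\FF_{\max(i+l,k+q)-1}$) or else carry out the full sixteen-term expansion directly. Two independent matching families then survive: the one already met in (c), governed by the position of $i+l$ relative to $k$ and weighted by $\theta_n^{l+q}$, and a second family in which $\xi_{i+l}$ pairs against the $\xi_{k+q}$-block, governed by $E_1,E_2$ and weighted by $\theta_n^{l-q}$, giving the two-line answer. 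The delicate part throughout is the range bookkeeping: one must verify that the summation indices, all confined to $[1,m-1]$, together with the defining equalities of $A_2$ and $E_2$, produce precisely the stated upper limits $m-1-(l+q)$ and $m-1-(l-q)$, with no alignment double-counted or omitted. The hypotheses $m>2M$ and $0\le l,q\le M$ are exactly what keep the shifts $l+q$ and $l-q$ within the admissible range, so that these geometric sums are non-degenerate and their boundary terms correctly included.
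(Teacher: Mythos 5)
Your plan reproduces the paper's argument: the same four-piece decomposition $U=\Delta_1+\Delta_2+\Delta_3+\Delta_4$, the same conditioning on the $\sigma$-field just below the dominant top index to reduce to $\ee\bigl(U\cdot(\Delta_1+\Delta_4)\bigr)$ in cases (a)--(c), the same pairing analysis of products of four centered i.i.d.\ variables (including the factor $2$ in (b) from $\Delta_{1,k,0}=\Delta_{2,k,0}$), and the full term-by-term expansion reserved for case (d) where neither top index dominates. The only point to tighten is in (a): the diagonal term $j=l$ of $\Delta_{1,k,l}$ is $\theta_n^{l}\xi_k^2$, so $\xi_k$ is not literally ``linear and unmatchable'' there; that contribution vanishes instead because $\theta_n^{l}(\xi_k^2-\ee\xi_0^2)$ is centered and independent of $U_{i,0,m,n}$.
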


\begin{proof} The proofs of the proposition are similar to the one of
Proposition \ref{prop-U}.

 {\bf Proof of (a)} Since $U_{i,0,m,n}$ is
measurable with respect to $\FF_{i}$, then we have for $i<k$,
$$
\aligned
&\ee(U_{i,0,m,n}U_{k,l,m,n})=\ee[U_{i,0,m,n}\ee(U_{k,l,m,n}|\FF_{i})]\\
=&\ee\left[U_{i,0,m,n}
\left(\sum_{j=1}^{m-1}\theta_n^j\xi_{k+l-j}\xi_k-\theta^l_n\ee\xi^2_0\right)\right]=0.
\endaligned
$$

{\bf Proof of (b)} Since $i<k$,
$$
\aligned
  \ee(U_{k,0,m,n}U_{i,l,m,n})=&
  \ee\left\{\left(2\sum_{p=1}^{m-1}\theta_n^p\xi_{k-p}\xi_k
  +\xi_{k}^2-\ee\xi^2_0\right)\right.\\
  &\left.\times\left(\sum_{j=1}^{m-1}\theta_n^j\xi_{i+l-j}\xi_i+\sum_{j=1}^{m-1}\xi_{i+l}\xi_{i-j}\theta_n^j
  +\xi_{i+l}\xi_i-\theta^l_n\ee\xi^2_0\right)\right\}\\
  =: &\ee(\Delta_{1}+\Delta_2+\Delta_3)(\Gamma_1+\Gamma_2+\Gamma_3+\Gamma_4),
\endaligned
$$
then it is easy to check that
$$
\ee\Delta_1\Gamma_4=\ee\Delta_2\Gamma_2=\ee\Delta_2\Gamma_3=\ee\Delta_3\Gamma_2=\ee\Delta_3\Gamma_3=0
$$
and
$$
\ee\Delta_2\Gamma_1=\ee\Delta_3\Gamma_4=\theta_n^l(\ee\xi_0^2)^2,\ \
\ee\Delta_2\Gamma_4=\ee\Delta_3\Gamma_1=-\theta_n^l(\ee\xi_0^2)^2.
$$
Furthermore, we have
\begin{itemize}
 \item  when $k<i+l$, then
$\ee\Delta_1\Gamma_1=2\theta_n^l(\ee\xi_0^2)^2$;

  \item when $k=i+l$, then
$\ee\Delta_1\Gamma_2=2\sum\limits_{j=1}^{m-1-l}\theta_n^{l+2j}(\ee\xi_0^2)^2$;

 \item when $k=i+l$, then
$\ee\Delta_1\Gamma_3=2\theta_n^l(\ee\xi_0^2)^2.$
\end{itemize}
So the desired result (b) is obtained.

{\bf Proof of (c)}  Since $i<k$ and $l<q$, then $U_{i,l,m,n}$ is
measurable with respect to $\FF_{k+q-1}$. Hence we have
$$
\aligned
& \ee(U_{i,l,m,n}U_{k,q,m,n})= \ee[U_{i,l,m,n}\ee(U_{k,q,m,n}|\FF_{k+q-1})]\\
 =& \ee\left[U_{i,l,m,n}
\left(\sum_{p=1}^{m-1}\theta_n^p\xi_{k+q-p}\xi_k-\theta^q_n\ee\xi^2_0\right)\right]\\
=&\ee\left[\left(\sum_{j=1}^{m-1}\theta_n^j\xi_{i+l-j}\xi_i+\sum_{j=1}^{m-1}\xi_{i+l}\xi_{i-j}\theta_n^j
  +\xi_{i+l}\xi_i-\theta^l_n\ee\xi^2_0\right)
\sum_{p=1}^{m-1}\theta_n^p\xi_{k+q-p}\xi_k\right].
\endaligned
$$
By the similar discussions as (b), we have
\begin{itemize}
\item
 $$ \aligned
\ee\left(\sum_{j=1}^{m-1}\theta_n^j\xi_{i+l-j}\xi_i\right)\left(\sum_{p=1}^{m-1}\theta_n^p\xi_{k+q-p}\xi_k\right)
=\theta_n^{l+q}(\ee\xi_0^2)^2(1+1_{A_1});
\endaligned
$$
\item when $i+l=k$, we have
$$
\ee\left(\sum_{j=1}^{m-1}\xi_{i+l}\xi_{i-j}\theta_n^j\right)\left(\sum_{p=1}^{m-1}\theta_n^p\xi_{k+q-p}\xi_k\right)
=\sum_{j=1}^{m-1-(l+q)}\theta_n^{l+q+2j}(\ee\xi_0^2)^2;
$$
\item when $i+l=k$, we have
$$
\ee\left(\xi_{i+l}\xi_i\sum_{p=1}^{m-1}\theta_n^p\xi_{k+q-p}\xi_k\right)
=\theta_n^{l+q}(\ee\xi_0^2)^2;
$$

\item for the last term,
$$
-\theta^l_n\ee\xi^2_0\ee\left(\sum_{p=1}^{m-1}\theta_n^p\xi_{k+q-p}\xi_k\right)
=-\theta_n^{l+q}(\ee\xi_0^2)^2.
$$
\end{itemize}
From the above discussion, the proof of (c) is completed.

{\bf Proof of (d)} Since $i<k$ and $q<l$, and
$$
\aligned
  \ee(U_{i,l,m,n}U_{k,q,m,n})=&
  \ee\left\{\left(\sum_{j=1}^{m-1}\theta_n^j\xi_{i+l-j}\xi_i+\sum_{j=1}^{m-1}\xi_{i+l}\xi_{i-j}\theta_n^j
  +\xi_{i+l}\xi_i-\theta^l_n\ee\xi^2_0\right)\right.\\
  &\left.\times\left(\sum_{p=1}^{m-1}\theta_n^p\xi_{k+q-p}\xi_k+\sum_{p=1}^{m-1}\xi_{k+q}\xi_{k-p}\theta_n^p
  +\xi_{k+q}\xi_k-\theta^q_n\ee\xi^2_0\right)\right\}\\
  =: &\ee(\hat\Delta_{1}+\hat\Delta_2+\hat\Delta_3+\hat\Delta_4)(\hat\Gamma_1+\hat\Gamma_2+\hat\Gamma_3+\hat\Gamma_4),
\endaligned
$$
then it is easy to see that
$$
\ee\hat\Delta_{1}\hat\Gamma_3=\ee\hat\Delta_{2}\hat\Gamma_3=\ee\hat\Delta_{3}\hat\Gamma_3=
\ee\hat\Delta_{4}\hat\Gamma_3=\ee
\hat\Delta_{2}\hat\Gamma_4=\ee\hat\Delta_{3}\hat\Gamma_4=\ee\hat\Delta_{4}\hat\Gamma_2=0
$$
and
$$
\ee\hat\Delta_{1}\hat\Gamma_4=\ee\hat\Delta_{4}\hat\Gamma_1=-\ee\hat\Delta_{4}\hat\Gamma_4=-\theta_n^{l+q}(\ee\xi_0^2)^2.
$$
In addition, we have
$$
\aligned
 \ee\hat\Delta_{1}\hat\Gamma_1=\theta_n^{l+q}(\ee\xi_0^2)^2(1+1_{A_1}),
\endaligned
$$
$$
\ee\hat\Delta_{2}\hat\Gamma_1=\theta_n^{l+q}(\ee\xi_0^2)^2\sum_{j=1}^{m-1-(l+q)}\theta_n^{2j}1_{A_2}
$$
and
$$
\ee\hat\Delta_{3}\hat\Gamma_1=\theta_n^{l+q}(\ee\xi_0^2)^21_{A_2}.
$$
Similarly, we can observe that
\begin{itemize}
 \item when $i+l>k+q$, then there exists $1\le j,q\le m-1$
  such that \
  $$\ee(\xi_{k+q}\xi_{k-p}\xi_{i+l-j}\xi_i)\ne 0;$$
 \item when $i+l=k+q$, then there exists $1\le j,q\le m-1$
  such that \
  $$\ee(\xi_{k+q}\xi_{k-p}\xi_{i+l}\xi_{i-j})\ne 0;$$
   \item when $i+l=k+q$, then there exists $1\le p\le m-1$
  such that \
  $$\ee(\xi_{i+l}\xi_i\xi_{k+q}\xi_{k-p})\ne 0.$$
\end{itemize}
Hence we have
$$
\ee\hat\Delta_{1}\hat\Gamma_2=\theta_n^{l-q}(\ee\xi_0^2)^21_{E_1},
$$
$$
\ee\hat\Delta_{2}\hat\Gamma_2=\theta_n^{l-q}(\ee\xi_0^2)^2\sum_{j=1}^{m-1-(l-q)}\theta_n^{2j}1_{E_2}
$$
and
$$
\ee\hat\Delta_{3}\hat\Gamma_2=\theta_n^{l-q}(\ee\xi_0^2)^21_{E_2}.
$$
Combining the above results, we complete the proof of (d).
\end{proof}

\begin{prop}\label{prop-UUU} When $i=k$, we have
 \begin{enumerate}[{\rm (1)}]
\item  If $0<l<q$, we have
$$\ee(U_{i,l,m,n}U_{i,q,m,n})=\theta_n^{l+q}(\ee\xi_0^4)-2\theta_n^{l+q}(\ee\xi_0^2)^2+\theta_n^{q-l}(\ee\xi_0^2)^2\sum_{j=0}^{m-1-(q-l)}\theta_n^{2j}.$$
\item If $0<q<l$, we have
$$\ee(U_{i,l,m,n}U_{i,q,m,n})=\theta_n^{l+q}(\ee\xi_0^4)-2\theta_n^{l+q}(\ee\xi_0^2)^2+\theta_n^{l-q}(\ee\xi_0^2)^2\sum_{p=0}^{m-1-(l-q)}\theta_n^{2p}.$$
\item If $l=q\ne 0$, we have
$$\ee(U_{i,l,m,n}^2)=\theta_n^{2l}(\ee\xi_0^4)-2\theta_n^{2l}(\ee\xi_0^2)^2+(\ee\xi_0^2)^2\left(2\sum_{j=1}^{m-1}\theta_n^{2j}+1\right).$$
\item If $q>0$, we have
$$\ee(U_{i,0,m,n}U_{i,q,m,n})=\theta_n^{q}(\ee\xi_0^4)-\theta_n^{q}(\ee\xi_0^2)^2+2(\ee\xi_0^2)^2\theta_n^q\sum_{j=1}^{m-1-q}
\theta_n^{2j}.$$
\end{enumerate}

\end{prop}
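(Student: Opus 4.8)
The plan is to follow verbatim the scheme of Propositions \ref{prop-U} and \ref{prop-UU}: substitute the representation (\ref{mdp-m-f}) for both $U_{i,l,m,n}$ and $U_{i,q,m,n}$ with the common index $i=k$, expand the product into its sixteen cross terms, and evaluate each expectation by the single combinatorial rule for centred i.i.d.\ noise. Writing each factor as the sum of its four pieces
$$
\Delta_1=\sum_{j=1}^{m-1}\theta_n^j\xi_{k+l-j}\xi_k,\quad \Delta_2=\sum_{j=1}^{m-1}\theta_n^j\xi_{k+l}\xi_{k-j},\quad \Delta_3=\xi_{k+l}\xi_k,\quad \Delta_4=-\theta_n^l\ee\xi_0^2,
$$
and likewise $\Gamma_1,\dots,\Gamma_4$ for $U_{i,q,m,n}$, every term is controlled by the fact that $\ee(\xi_a\xi_b\xi_c\xi_d)$ vanishes unless the four indices split into two equal pairs (giving $(\ee\xi_0^2)^2$) or all four coincide (giving $\ee\xi_0^4$), while any product containing an index of odd multiplicity has zero expectation. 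This reduces each of the sixteen expectations to a bookkeeping of admissible index matchings.

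The essential new feature compared with Proposition \ref{prop-UU} is that here $i=k$, so both factors share the anchor variable $\xi_k$; consequently the fully diagonal matching survives and produces the $\ee\xi_0^4$ term, exactly as in the variance computation of Proposition \ref{prop-U} iv). I would treat case (1) ($0<l<q$) in full and reduce the rest to it. A short inspection shows that the only products not annihilated by an unpaired index are $\ee(\Delta_1\Gamma_1)$, $\ee(\Delta_3\Gamma_1)$, $\ee(\Delta_1\Gamma_4)$, $\ee(\Delta_4\Gamma_1)$ and $\ee(\Delta_4\Gamma_4)$. In $\ee(\Delta_1\Gamma_1)$ the choice $j=l,\,p=q$ forces all four indices equal to $k$ and yields $\theta_n^{l+q}\ee\xi_0^4$, while the off-diagonal matchings $k+l-j=k+q-p\neq k$, i.e.\ $p=q-l+j$ with $j\neq l$, give $(\ee\xi_0^2)^2\sum_{j=1,\,j\neq l}^{m-1-(q-l)}\theta_n^{2j+q-l}$; the term $\ee(\Delta_3\Gamma_1)$ supplies the boundary index $j=0$ of the geometric sum, namely $\theta_n^{q-l}(\ee\xi_0^2)^2$; and the three products involving the constants contribute $(-1-1+1)\theta_n^{l+q}(\ee\xi_0^2)^2$. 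Recombining, the omission of the $j=l$ slot from the off-diagonal sum (that slot is occupied by the fourth moment) contributes a further $-\theta_n^{l+q}$, so the three remaining $(\ee\xi_0^2)^2$ sources assemble into $\theta_n^{q-l}\sum_{j=0}^{m-1-(q-l)}\theta_n^{2j}-2\theta_n^{l+q}$, which is the asserted identity.

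I would then dispatch the other items economically. Case (3), $l=q\neq 0$, is precisely $\ee(U_{i,l,m,n}^2)$ and is nothing but Proposition \ref{prop-U} iv) rewritten, so it can be quoted directly. Case (2), $0<q<l$, follows from case (1) by interchanging the roles of $l$ and $q$ and renaming the summation variables, the surviving matchings being the mirror images of those above. Case (4), $q>0$ with $l=0$, is the degenerate analogue in which $\Delta_1,\Delta_2$ merge into $2\sum_p\theta_n^p\xi_{k-p}\xi_k$ and $\Delta_3=\xi_k^2$; the same expansion applies, the only change being that $\Delta_3=\xi_k^2$ now admits an all-equal matching against $\Gamma_1$, which is what produces the $\theta_n^q\ee\xi_0^4$ term and the altered constant $-\theta_n^q(\ee\xi_0^2)^2$.

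The main obstacle is purely the combinatorial bookkeeping: one must keep the admissible ranges of the summation indices $j$ and $p$ correct so that the surviving geometric sums terminate precisely at $m-1-(q-l)$ rather than at $m-1$, and one must carefully separate the single all-indices-equal matching (weight $\ee\xi_0^4$) from the neighbouring paired matching at the same power $\theta_n^{l+q}$ (weight $(\ee\xi_0^2)^2$), since the two differ only by whether a fourth moment or a squared second moment appears. Once these matchings and ranges are tabulated exactly as in the proofs of Propositions \ref{prop-U} and \ref{prop-UU}, summing the surviving contributions yields the four stated identities.
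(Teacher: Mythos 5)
Your proposal is correct and follows essentially the same route as the paper: both expand each factor into the four pieces $\Delta_1,\dots,\Delta_4$ (resp.\ $\Gamma_1,\dots,\Gamma_4$), evaluate the sixteen cross-expectations by the pairing rule for centred i.i.d.\ noise, and track the index ranges so that the geometric sums terminate at $m-1-(q-l)$; your bookkeeping for case (1), including the separation of the all-equal matching $j=l,\ p=q$ (fourth moment) from the adjacent paired matching at the same power $\theta_n^{l+q}$, reproduces the paper's table exactly. The only cosmetic difference is that the paper tabulates all cases at once and obtains item (4) by first conditioning on $\FF_{i+q-1}$, whereas you reduce (2) to (1) by symmetry and expand (4) directly, which is equivalent.
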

\begin{proof} From $$
\aligned
  \ee(U_{i,l,m,n}U_{i,q,m,n})=&\ee\left\{
  \left(\sum_{j=1}^{m-1}\theta_n^j\xi_{i+l-j}\xi_i+\sum_{j=1}^{m-1}\xi_{i+l}\xi_{i-j}\theta_n^j
  +\xi_{i+l}\xi_i-\theta^l_n\ee\xi^2_0\right)\right.\\
  &\left.\times\left(\sum_{p=1}^{m-1}\theta_n^p\xi_{i+q-p}\xi_i+\sum_{p=1}^{m-1}\xi_{i+q}\xi_{i-p}\theta_n^p
  +\xi_{i+q}\xi_i-\theta^q_n\ee\xi^2_0\right)\right\}\\
  =: &\ee(\tilde\Delta_{1}+\tilde\Delta_2+\tilde\Delta_3+\tilde\Delta_4)(\tilde\Gamma_1+\tilde\Gamma_2+\tilde\Gamma_3+\tilde\Gamma_4),
\endaligned
$$
we know that for any $0<l,q\le M$,
$$
\ee\tilde\Delta_{1}\tilde\Gamma_2=\ee\tilde\Delta_{2}\tilde\Gamma_1=\ee\tilde\Delta_{2}\tilde\Gamma_3
=\ee\tilde\Delta_{2}\tilde\Gamma_4=\ee\tilde\Delta_{3}\tilde\Gamma_2=\ee\tilde\Delta_{3}\tilde\Gamma_4=
\ee\tilde\Delta_{4}\tilde\Gamma_2=\ee\tilde\Delta_{4}\tilde\Gamma_3=0,
$$
$$
\ee\tilde\Delta_{1}\tilde\Gamma_4=\ee\tilde\Delta_{4}\tilde\Gamma_1=-\ee\tilde\Delta_{4}\tilde\Gamma_4=-\theta_n^{l+q}(\ee\xi_0^2)^2
$$
and
$$
\aligned
&\ee\tilde\Delta_{1}\tilde\Gamma_1=\begin{cases}\theta_n^{2l}\ee\xi_0^4+(\ee\xi_0^2)^2\left(\sum_{j=1}^{m-1}\theta_n^{2j}-\theta_n^{2l}\right),\
\ & l=q\\
\theta_n^{l+q}\ee\xi_0^4+(\ee\xi_0^2)^2\left(\sum_{p=1}^{m-1-(l-q)}\theta_n^{l-q+2p}-\theta_n^{l+q}\right),\
\ & l>q\\
\theta_n^{l+q}\ee\xi_0^4+(\ee\xi_0^2)^2\left(\sum_{p=1}^{m-1-(q-l)}\theta_n^{q-l+2p}-\theta_n^{l+q}\right),\
\ & l<q
\end{cases},\\
& \ee\tilde\Delta_{1}\tilde\Gamma_3=\begin{cases}
\theta_n^{l-q}(\ee\xi_0^2)^2,&  l>q\\
0,&  l\le q
\end{cases},\\
& \ee\tilde\Delta_{3}\tilde\Gamma_1=\begin{cases}
\theta_n^{q-l}(\ee\xi_0^2)^2,&  q>l\\
0,&  q\le l
\end{cases},\\
& \ee\tilde\Delta_{2}\tilde\Gamma_2=\begin{cases}
(\ee\xi_0^2)^2\sum_{j=1}^{m-1}\theta_n^{2j},&  l=q\\
0,&  l\ne q
\end{cases},\\
& \ee\tilde\Delta_{3}\tilde\Gamma_3=\begin{cases}
(\ee\xi_0^2)^2,&  q=l\\
0,&  q\ne l
\end{cases}.
\endaligned
$$
So the results (1)-(3) hold. Furthermore, (4) can be obtained by the
following observation
$$
\aligned
\ee(U_{i,0,m,n}U_{i,q,m,n})=\ee[U_{i,0,m,n}\ee(U_{i,q,m,n}|\FF_{i+q-1})]
=\ee\left(U_{i,0,m,n}\sum_{p=1}^{m-1}\theta_n^p\xi_{i+q-p}\xi_i\right).
\endaligned
$$
\end{proof}

\subsection{Moderate deviation for $m$-dependent sequence with
unbounded $m$}

Before giving our proofs of the main results, it is necessary to
give the following moderate deviation principle for $m$-dependent
random variables with unbounded $m$. For the readability of the
paper, we postpone its proof to Appendix.

\begin{lem}\label{lem-miao}
For each $n=1,2,\ldots,$ let $m=m(n)$ be specified and suppose that
$\{X_{1,n},\ldots,X_{n,n}\}$ be a sequence of strict stationary
$m$-dependent random variables with zero means. Moreover, we assume
the following conditions hold:

\begin{itemize}
\item[(A)] there exists a positive $0<\gamma<1$ such that the moderate deviation scale $(b_n)$ satisfies
$$
 b_n\to\infty,\ \ \ \  \frac{b_nm^{1+\gamma}}{\sqrt{n}}\to0;
$$
\item[(B)] for some $M>0$,
 $$
\frac{n}{b_n^2m}\int^\infty_Me^x\pp\left(|X_{1,n}|\ge\frac{\sqrt
nx}{ b_nm} \right)dx\to 0;
$$
\item[(C)] for any $\varepsilon>0$,
$$
\left(\frac{\sqrt{n}}{b_n}\right)^{2+\frac{2}{1+\gamma}}\pp\left(|X_{1,n}|>\varepsilon\left(\frac{\sqrt{n}}{b_n}\right)^{1-\frac{1}{1+\gamma}}\right)\to
0;
$$
\item[(D)] there exists a constant $0<\sigma^2<\infty$, such that
$$
 \lim_{n\to\infty}m^{-1}Var(X_{1,n}+\cdots+X_{m,n})=\sigma^2
 $$
 and
 $$
 \lim_{n\to\infty}m^{-1}\sum_{i=1}^mi\ee(X_{1,n}X_{i+1,n})=0.
$$
\end{itemize}
 Then for any $\lambda\in\rr$, we have
 $$
 \lim_{n\to\infty}\frac{1}{b_n^2}\log\ee\exp\left(\lambda\frac{b_n}{\sqrt n}\sum_{i=1}^nX_{i,n}\right)=\frac{\lambda^2\sigma^2}{2}.
 $$
Furthermore, by the G\"artner-Ellis theorem (see \cite{DeZe}), for
any $r>0$, we get
$$
\lim_{n\to\infty}\frac{1}{b_n^2}\log\pp\left(\frac{1}{b_n\sqrt
n}\left|\sum_{i=1}^nX_{i,n}\right|\ge
r\right)=-\frac{r^2}{2\sigma^2}.
$$
\end{lem}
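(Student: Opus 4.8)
The plan is to reduce everything to computing the normalized limiting logarithmic moment generating function
$$\Lambda(\lambda):=\lim_{n\to\infty}\frac{1}{b_n^2}\log\ee\exp\left(\lambda\frac{b_n}{\sqrt n}\sum_{i=1}^nX_{i,n}\right)=\frac{\lambda^2\sigma^2}{2},$$
since once this is in hand the stated tail estimate is exactly the G\"artner-Ellis theorem applied to the finite, everywhere-differentiable function $\Lambda$, whose Legendre transform is $\sup_\lambda(\lambda r-\lambda^2\sigma^2/2)=r^2/(2\sigma^2)$. Write $S_n=\sum_{i=1}^nX_{i,n}$ and $t_n=\lambda b_n/\sqrt n$, and note that condition (A) forces $b_n/\sqrt n\to0$, hence $t_n\to0$, so we work in the small-tilt regime appropriate to moderate deviations. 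The two engines of the argument are a truncation that replaces $X_{i,n}$ by a bounded variable without disturbing $\Lambda$, and a big-block/small-block decomposition that reduces the problem to a sum of i.i.d.\ block sums.

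First I would truncate at a level $\kappa_n$ of the order $\sqrt n/(b_nm)$ dictated by the tail in (B), setting $\bar X_{i,n}:=X_{i,n}\mathbf 1_{\{|X_{i,n}|\le\kappa_n\}}-\ee[X_{i,n}\mathbf 1_{\{|X_{i,n}|\le\kappa_n\}}]$. Conditions (C) and (B) are tailored to this: (C) guarantees that no single term exceeds the truncation level on the exponential scale $b_n^2$, while (B) controls the exponential moment generated by the discarded tails within a block, so that passing from $X_{i,n}$ to the bounded, centered, stationary, $m$-dependent array $\bar X_{i,n}$ alters $\Lambda$ by $o(1)$. Next I would partition $\{1,\dots,n\}$ into alternating big blocks of length $p=p_n$ and small blocks of length $q=q_n\ge m+1$; the gaps of length $q>m$ make the big-block sums $B_1,\dots,B_k$ (with $k\approx n/(p+q)$) mutually independent, and stationarity makes them identically distributed. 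Choosing the small blocks of smaller order than the big ones, which is exactly what condition (A) permits, renders the aggregate small-block contribution exponentially negligible, so that
$$\frac{1}{b_n^2}\log\ee\exp\left(t_nS_n\right)=\frac{k}{b_n^2}\log\ee\exp\left(t_nB_1\right)+o(1),$$
the factorization following from the independence and common law of the big blocks.

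It then remains to expand the single-block cumulant generating function $\varphi_n(t):=\log\ee\exp(tB_1)$. Since $\ee B_1=0$, Taylor's formula yields
$$\varphi_n(t_n)=\tfrac12\,t_n^2\,\ee B_1^2+\rho_n,$$
where the quadratic term dominates because $t_n^2\,\ee B_1^2\to0$, guaranteed by keeping the big-block length $p$ of smaller order than $n/b_n^2$, and the cubic-and-higher remainder $\rho_n$ is precisely where the truncation together with the integrability in (B) and (C) is spent: it makes the region where $t_nB_1$ fails to be small negligible after division by $b_n^2$ and summation over the $k$ blocks. Using $t_n^2=\lambda^2b_n^2/n$ turns the leading contribution into
$$\frac{k}{b_n^2}\cdot\frac{t_n^2}{2}\,\ee B_1^2=\frac{\lambda^2}{2}\cdot\frac{k}{n}\,\ee B_1^2\longrightarrow\frac{\lambda^2\sigma^2}{2},$$
where condition (D) supplies the per-length variance $p^{-1}\ee B_1^2\to\sigma^2$ (its first part fixing the growth rate, its second part discarding the boundary covariance corrections) and $kp/n\to1$. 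This establishes $\Lambda(\lambda)=\lambda^2\sigma^2/2$, and G\"artner-Ellis then delivers the moderate deviation estimate.

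The hard part will be the simultaneous balancing of the scales $m$, $p$, $q$ and $\kappa_n$ against $b_n$ and $\sqrt n$: the gap $q$ must exceed $m$ for independence, the big blocks must dominate the small ones so the latter vanish yet still satisfy $p=o(n/b_n^2)$ while $kp\sim n$ (forcing $k\gg b_n^2$), and the truncation must sit high enough for (C) to kill the tails while remaining compatible with the block-variance normalization of (D). Verifying that a single choice of $p_n,q_n,\kappa_n$ meets conditions (A)--(D) simultaneously, and, crucially, that each discarded piece is genuinely $o(b_n^2)$ inside the logarithm rather than merely $o(1)$ pointwise, is the technical crux; everything downstream is the Taylor expansion above and the elementary Legendre computation.
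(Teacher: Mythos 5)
Your proposal follows essentially the same route as the paper's own proof: truncate at a level of order $\sqrt n/b_n$ (the paper uses $\tau\sqrt n/b_n$ rather than your $\sqrt n/(b_n m)$, but this is an implementation detail), group the variables into big blocks separated by gaps of length at least $m$ so that the block sums become i.i.d.\ (the paper does this in two stages, first forming $1$-dependent $m$-blocks $Y_{j,n}$ and then i.i.d.\ super-blocks $Z_{h,n}$ by dropping every $p$-th $Y$-block), Taylor-expand the single-block log-moment generating function using condition (D) for the variance and conditions (B), (C) for the tail and remainder terms, and finish with G\"artner--Ellis. The scale-balancing you flag as the technical crux is exactly what the paper resolves by choosing $p=p(n)$ with $b_n(mp)^{1+\gamma}/\sqrt n\to 0$, so your plan is correct and matches the paper's argument.
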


\begin{rem} In \cite{Chen}, Chen established the
moderate deviation for $m$-dependent random vectors with fixed
parameter $m$. Recently, Miao and Yang \cite{MY} proved the
following moderate deviation, which extended Chen's result from
fixed $m$ to unbounded $m$ for $\rr$-valued $m$-dependent sequence:

{\it Assume that
  \begin{equation}\label{mdp-exp}
 \sup_{n}\ee\exp\{\alpha|X_{1,n}|\}<\infty,\ \ \ \text{for some}\ \
 \alpha>0
  \end{equation}
 and
\begin{equation}\label{mdp-exp1}
 b_n\to\infty,\ \ \ \  \frac{b_nm^{2}}{\sqrt{n}}\to0.
\end{equation}
In addition, if the condition (D) hold, then for any
$\lambda\in\rr$, we have
$$
 \lim_{n\to\infty}\frac{1}{b_n^2}\log\ee\exp\left(\lambda\frac{b_n}{\sqrt n}\sum_{i=1}^nX_{i,n}\right)=\frac{\lambda^2\sigma^2}{2}.
$$
} It is easy to see that the condition (\ref{mdp-exp}) and
(\ref{mdp-exp1}) imply the conditions (A), (B) and (C). But, the
condition (\ref{mdp-exp}) is not easy to check in the process of
proving our main results, so we need develop a new moderate
deviation for $m$-dependent sequence with unbounded $m$, that is,
Lemma \ref{lem-miao}.
\end{rem}

\section{Proof of Theorem \ref{mdp-thm1}}

\subsection{Asymptotic term and moderate deviations}

We have the following useful results, based on the properties of the
sequence $\{U_{k,l,m,n}\}_{1\le k\le n-l}$.

\begin{cor}\label{cor-0} Let $m:=m(n)$ denote the subsequence of $n$ such that $m(1-\theta_n)\to
\infty$ as $n\to\infty$. Then we have
\begin{equation}\label{cor-1}
\lim_{n\to\infty}\frac{1-\theta_n^2}{m}\sum_{k=1}^mk\ee(U_{1,l,m,n}U_{k+1,l,m,n})=0
\end{equation}
and
\begin{equation}\label{cor-2}
 \lim_{n\to\infty}\frac{1-\theta_n^2}{m}Var(U_{1,l,m,n}+\cdots+U_{m,l,m,n})=4(\ee\xi_0^2)^2.
  \end{equation}
\end{cor}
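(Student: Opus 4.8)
The plan is to reduce both limits to finite sums of lagged covariances via strict stationarity, substitute the explicit formulas of Proposition~\ref{prop-U}, and then exploit a single geometric identity that governs everything. Writing $v_n:=\ee(U_{1,l,m,n}^2)$ and $\gamma_n(d):=\ee(U_{1,l,m,n}U_{1+d,l,m,n})$ for the lag-$d$ covariance, stationarity gives
\begin{equation*}
\mathrm{Var}(U_{1,l,m,n}+\cdots+U_{m,l,m,n})=m\,v_n+2\sum_{d=1}^{m-1}(m-d)\,\gamma_n(d),
\end{equation*}
so that (\ref{cor-2}) and (\ref{cor-1}) both become weighted sums of the $\gamma_n(d)$. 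The decisive structural fact, read off from Proposition~\ref{prop-U}~(iii) with $A_1=\{d<l\}$ and $A_2=\{d=l\}$, is that $\gamma_n(d)=0$ whenever $d>l$; thus for fixed $l$ each sum has only finitely many (at most $l+1$) nonzero terms, and the upper limits $m-1-2l$ appearing in the formulas are meaningful once $n$ is large enough that $m>2M$.

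For (\ref{cor-2}) I would treat $l=0$ and $l\ne0$ separately. When $l=0$, part~(ii) gives $\gamma_n(d)=0$ for all $d\ge1$, so the variance equals $m\,v_n$ with $v_n$ from part~(v); multiplying by $(1-\theta_n^2)/m$ and using the identity
\begin{equation*}
(1-\theta_n^2)\sum_{j=1}^{m-1}\theta_n^{2j}=\theta_n^2\bigl(1-\theta_n^{2(m-1)}\bigr)
\end{equation*}
reduces the limit to $4(\ee\xi_0^2)^2$. When $l\ne0$, the diagonal contribution $(1-\theta_n^2)v_n$ from part~(iv) again yields $2(\ee\xi_0^2)^2$ via the same identity (the $\ee\xi_0^4$ and constant pieces die against the prefactor $1-\theta_n^2\to0$), while the only surviving off-diagonal contribution is the lag-$l$ term $2\frac{m-l}{m}\theta_n^{2l}(\ee\xi_0^2)^2(1-\theta_n^2)\sum_{q=0}^{m-1-2l}\theta_n^{2q}$, which tends to a further $2(\ee\xi_0^2)^2$; the lags $d<l$ contribute $O(1-\theta_n^2)\to0$. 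The two halves add to $4(\ee\xi_0^2)^2$.

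For (\ref{cor-1}) the same input gives $\sum_{k=1}^{m}k\,\gamma_n(k)$ supported on $k\le l$: the terms with $k<l$ sum to a bounded quantity, killed by the factor $(1-\theta_n^2)/m\to0$, while the lag-$l$ term carries an extra factor $l/m\to0$ after the geometric sum is evaluated, so the whole expression vanishes. The one nonroutine point throughout is controlling the geometric tails: every appearance of $\sum\theta_n^{2j}$ grows like $(1-\theta_n^2)^{-1}$ and is tamed only by the prefactor, leaving behind residual factors $\theta_n^{2(m-1)}$ and $\theta_n^{2(m-2l)}$. I expect the crux to be verifying that these tend to $0$, which is exactly where the defining property of $m$ enters: from $m(1-\theta_n)\to\infty$ and $\log\theta_n\le-(1-\theta_n)$ one gets $(m-c)(1-\theta_n)\to\infty$ for each fixed $c$, hence $\theta_n^{2(m-c)}=\exp\bigl(2(m-c)\log\theta_n\bigr)\to0$. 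Combined with $\theta_n\to1$ and $(m-c)/m\to1$, this pins down every limit.
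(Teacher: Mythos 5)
Your proposal is correct and follows essentially the same route as the paper: both reduce the two limits to the explicit lagged covariances of Proposition \ref{prop-U}, use the fact that $\ee(U_{1,l,m,n}U_{1+d,l,m,n})$ vanishes for lags $d>l$ (splitting the cases $l=0$ and $l\ne 0$), and finish with the geometric identity $(1-\theta_n^2)\sum_{j}\theta_n^{2j}\to 1$ together with $\theta_n^{2(m-c)}\to 0$, which is exactly where the hypothesis $m(1-\theta_n)\to\infty$ is used in the paper as well. Your unified bookkeeping via $\gamma_n(d)$ and the weights $(m-d)$ is a slightly cleaner presentation of the same computation.
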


\begin{proof} {\bf  Case $l\ne 0$.} From iii) in Proposition \ref{prop-U}, it is easy to see
$$
\aligned
&\lim_{n\to\infty}\frac{1-\theta_n^2}{m}\sum_{k=1}^mk\ee(U_{1,l,m,n}U_{k+1,l,m,n})\\
=&\lim_{n\to\infty}\frac{1-\theta_n^2}{m}\sum_{k=1}^lk\ee(U_{1,l,m,n}U_{k+1,l,m,n})\\
=&\lim_{n\to\infty}\frac{1-\theta_n^2}{m}\left[\sum_{k=1}^{l-1}k\ee(U_{1,l,m,n}U_{k+1,l,m,n})
+l\ee(U_{1,l,m,n}U_{l+1,l,m,n})\right]\\
=&\lim_{n\to\infty}\frac{1-\theta_n^2}{m}\left[\sum_{k=1}^{l}k(\theta_n^l\ee\xi_0^2)^2
+l(\ee\xi_{0}^2)^2\sum_{q=1}^{m-1-2l}\theta_n^{2q+2l}\right]=0.
\endaligned
$$
Furthermore, since
$$
\aligned
 & Var(U_{1,l,m,n}+\cdots+U_{m,l,m,n})\\
 =& \sum_{k=1}^m\ee
 U^2_{k,l,m,n}+2\sum_{k=1}^{m-1}\sum_{q=k+1}^m\ee(U_{k,l,m,n}U_{q,l,m,n})\\
 =& \sum_{k=1}^m\ee
 U^2_{k,l,m,n}+2\sum_{k=1}^{m-l}\sum_{q=k+1}^{k+l}\ee(U_{k,l,m,n}U_{q,l,m,n}),
\endaligned
$$
then, by iii) and iv) in Proposition \ref{prop-U}, we have
$$
\aligned
\sum_{k=1}^{m-l}\sum_{q=k+1}^{k+l}\ee(U_{k,l,m,n}U_{q,l,m,n})
=&(m-l)\left(l(\theta_n^l\ee\xi_0^2)^2+(\ee\xi_{0}^2)^2\sum_{q=1}^{m-1-2l}\theta_n^{2q+2l}\right)
\endaligned
$$
and
$$
\sum_{k=1}^m\ee
 U^2_{k,l,m,n}=m\left(\theta_n^{2l}\ee\xi_0^4+\left(1-2\theta_n^{2l}+2\sum_{j=1}^{m-1}\theta_n^{2j}\right)(\ee\xi_0^2)^2\right).
$$
Hence it follows that
$$
\aligned
 &Var(U_{1,l,m,n}+\cdots+U_{m,l,m,n})\\
=&m\theta_n^{2l}\ee\xi_0^4+\left(m+[2(m-l)l-2m]\theta_n^{2l}\right)(\ee\xi_0^2)^2\\
&\ \ +
\left(2m\sum_{j=1}^{m-1}\theta_n^{2j}+2(m-l)\theta_n^{2l}\sum_{j=1}^{m-1-2l}\theta_n^{2j}\right)(\ee\xi_0^2)^2.
\endaligned
$$
By the assumption $(1-\theta_n)m\to \infty$ (which implies
$\theta_n^{m}\to 0$), we have
$$
\lim_{n\to\infty}\frac{1-\theta_n^2}{m}Var(U_{1,l,m,n}+\cdots+U_{m,l,m,n})=4(\ee\xi_0^2)^2.
$$

{\bf Case $l=0$.} From ii) in Proposition \ref{prop-U}, we have
$$
\lim_{n\to\infty}\frac{1-\theta_n^2}{m}\sum_{k=1}^mk\ee(U_{1,0,m,n}U_{k+1,0,m,n})=0
$$
and
$$
Var(U_{1,0,m,n}+\cdots+U_{m,0,m,n})
 = \sum_{k=1}^m\ee
 U^2_{k,0,m,n}.
$$
By v) in Proposition \ref{prop-U}, it follows that
\begin{equation}\label{U-00}
\ee
 U^2_{k,0,m,n}=\ee\xi_0^4+(\ee\xi_0^2)^2\left[4\sum_{j=1}^{m-1}\theta_n^{2j}-1\right]
\end{equation}
then we have
$$
\lim_{n\to\infty}\frac{1-\theta_n^2}{m}Var(U_{1,l,m,n}+\cdots+U_{m,l,m,n})=4(\ee\xi_0^2)^2.
$$
\end{proof}

Before giving the following proposition, we need to mention the {\bf
claim}: owing to the conditions
$$n(1-\theta_n)\to\infty\ \  \text{and}\ \
\frac{b_n}{(1-\theta_n)^2\sqrt n}\to 0,$$
 there must exist a subsequence
$m=m(n)$ such that
\begin{equation}\label{m-p}
m(1-\theta_n)\to \infty,\ \ \ \frac{m(1-\theta_n)}{|\log(1-\theta_n)|}\to \infty\ \ \text{and}\ \
\frac{b_nm^{5/3}}{\sqrt{n}}\to0.
\end{equation}
For instance, we can take
$$
m=(1-\theta_n)^{-6/5}.
$$

Now, based on the above preparations, we have the following result.
\begin{prop}\label{mdp-prop}
Under the assumptions of Theorem \ref{mdp-thm1}, for any
$\lambda\in\rr$, we have
$$
 \lim_{n\to\infty}\frac{1}{b_n^2}\log\ee\exp\left\{\lambda\frac{b_n\sqrt{1-\theta_n^2}}{\sqrt{n-l}}\sum_{k=1}^{n-l}U_{k,l,m,n}\right\}
 =2\lambda^2(\ee\xi_0^2)^2,\ \ \ \  0\le l\le M,
$$
where the sequence $\{m\}$ satisfies the properties in (\ref{m-p}).
Furthermore, for any $r>0$,
$$
 \lim_{n\to\infty}\frac{1}{b_n^2}\log\pp\left(\frac{\sqrt{1-\theta_n^2}}{b_n\sqrt{n-l}}\left|\sum_{k=1}^{n-l}U_{k,l,m,n}\right|\ge r\right)
 =-\frac{r^2}{8(\ee\xi_0^2)^2},\ \ \ \ 0\le l\le M.
$$
\end{prop}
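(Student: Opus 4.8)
The plan is to deduce both statements from the moderate deviation principle for $m$-dependent sequences with unbounded $m$, i.e. Lemma \ref{lem-miao}, applied not to $U_{k,l,m,n}$ itself but to the rescaled array
$$
Y_{k,n} := \sqrt{1-\theta_n^2}\,U_{k,l,m,n}, \qquad 1\le k\le n-l.
$$
With this choice one has the identity
$$
\lambda\frac{b_n}{\sqrt{n-l}}\sum_{k=1}^{n-l}Y_{k,n} = \lambda\frac{b_n\sqrt{1-\theta_n^2}}{\sqrt{n-l}}\sum_{k=1}^{n-l}U_{k,l,m,n},
$$
so that the Laplace functional appearing in Lemma \ref{lem-miao} is exactly the one in the proposition; and the rate $-r^2/(2\sigma^2)$ produced by the built-in G\"artner--Ellis step will give the claimed $-r^2/(8(\ee\xi_0^2)^2)$ once we identify $\sigma^2 = 4(\ee\xi_0^2)^2$. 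By i) of Proposition \ref{prop-U} the array $\{Y_{k,n}\}$ is centered, and it is strictly stationary and $(m+l)$-dependent; since $l\le M$ is fixed I would apply Lemma \ref{lem-miao} with dependence range $m+l\sim m$, so that all the asymptotics below are unaffected. It then remains to verify hypotheses (A)--(D) for $\{Y_{k,n}\}$, with $n$ replaced by $n-l$.

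Conditions (A) and (D) are immediate from the preparatory results. For (A), the choice $m=(1-\theta_n)^{-6/5}$ in (\ref{m-p}) gives $b_n m^{5/3}/\sqrt n\to 0$, so (A) holds with $\gamma=2/3$ (and $(m+l)^{5/3}\sim m^{5/3}$). For (D), multiplying the limits of Corollary \ref{cor-0} by the rescaling factor yields
$$
m^{-1}Var(Y_1+\cdots+Y_m) = \frac{1-\theta_n^2}{m}Var(U_{1,l,m,n}+\cdots+U_{m,l,m,n})\to 4(\ee\xi_0^2)^2 =: \sigma^2
$$
and
$$
m^{-1}\sum_{i=1}^m i\,\ee(Y_1 Y_{i+1}) = \frac{1-\theta_n^2}{m}\sum_{i=1}^m i\,\ee(U_{1,l,m,n}U_{i+1,l,m,n})\to 0,
$$
which are precisely (\ref{cor-2}) and (\ref{cor-1}); passing from $m$ to $m+l$ changes nothing since $l$ is bounded.

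The substantive point, and the step I expect to be the main obstacle, is verifying the tail conditions (B) and (C), for which I would establish a uniform sub-exponential tail bound for $Y_{1,n}$. Writing
$$
U_{1,l,m,n}=\xi_1\sum_{j=1}^{m-1}\theta_n^j\xi_{1+l-j}+\xi_{1+l}\sum_{j=1}^{m-1}\theta_n^j\xi_{1-j}+\xi_{1+l}\xi_1-\theta_n^l\ee\xi_0^2,
$$
each cross term is the product of a single $\xi$ with a geometrically weighted sum $\sum_j\theta_n^j\xi$. Conditioning on the single factor and using the sub-Gaussian bound $\ee e^{s\xi_0}\le e^{Cs^2}$ (a consequence of condition (2)), together with $\sum_{j}\theta_n^{2j}\le(1-\theta_n^2)^{-1}$ and the Cauchy--Schwarz/Jensen/Markov device already exploited in Lemma \ref{mdp-lem1}, I would show that $\ee\exp(\mu\,U_{1,l,m,n})$ is finite for $|\mu|\lesssim\sqrt{1-\theta_n^2}$. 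Consequently $\ee\exp(\mu' Y_{1,n})=\ee\exp\!\big(\mu'\sqrt{1-\theta_n^2}\,U_{1,l,m,n}\big)$ is bounded uniformly in $n$ for $\mu'$ in a fixed neighbourhood of the origin, which translates into a tail estimate $\pp(|Y_{1,n}|\ge t)\le K e^{-ct}$ with $K,c>0$ independent of $n$.

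With such a uniform bound in hand the remaining checks are routine. For (B), inserting $\pp\big(|Y_{1,n}|\ge \sqrt n\,x/(b_n m)\big)\le K\exp(-c\sqrt n\,x/(b_n m))$ into the integral shows that the integral decays exponentially in $\sqrt n/(b_n m)$, a quantity tending to $\infty$ because $b_n m^{5/3}/\sqrt n\to0$; this decay overwhelms the polynomial prefactor $n/(b_n^2 m)$, so the expression tends to $0$. For (C), with $\gamma=2/3$ the uniform exponential tail beats the prefactor $(\sqrt n/b_n)^{16/5}$ at the diverging threshold $\varepsilon(\sqrt n/b_n)^{2/5}$. Once (A)--(D) are verified, Lemma \ref{lem-miao} yields the Laplace limit $\lambda^2\sigma^2/2=2\lambda^2(\ee\xi_0^2)^2$, and its G\"artner--Ellis conclusion gives the moderate deviation estimate with rate $-r^2/(2\sigma^2)=-r^2/(8(\ee\xi_0^2)^2)$, completing the proof.
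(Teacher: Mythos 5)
Your proposal is correct and has the same architecture as the paper's proof: rescale to $X_{1,l,n}=\sqrt{1-\theta_n^2}\,U_{1,l,m,n}$, apply Lemma \ref{lem-miao} with $\gamma=2/3$, get (A) from (\ref{m-p}) and (D) from Corollary \ref{cor-0} with $\sigma^2=4(\ee\xi_0^2)^2$, and reduce (B)--(C) to tail estimates on the cross terms $\sqrt{1-\theta_n^2}\sum_{j}\theta_n^j\xi_{1+l-j}\xi_1$. The one place you genuinely diverge is in how those tails are controlled. The paper never proves a uniform-in-$n$ exponential tail for $X_{1,l,n}$: using $2|\xi_{1+l-j}\xi_1|\le\xi_{1+l-j}^2+\xi_1^2$ and Jensen it only gets $\ee\exp\bigl\{\tfrac{\alpha}{3}(1-\theta_n^2)\sum_j\theta_n^j|\xi_{1+l-j}\xi_1|\bigr\}\le\ee e^{\alpha\xi_1^2}$, i.e.\ an exponential moment at the stronger scaling $1-\theta_n^2$, whose tail rate degrades by a factor $\sqrt{1-\theta_n^2}$ when converted back to $X_{1,l,n}$; it compensates by checking that the thresholds in (B) and (C), multiplied by the lost factor $\sqrt{1-\theta_n^2}\ge m^{-1/2}$, still diverge like $(\sqrt n/b_n)^{1/10}$ because $b_nm^{5/3}/\sqrt n\to0$. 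Your route --- conditioning on the single factor and using the Hoeffding-type bound $\ee e^{s\xi_0}\le e^{Cs^2}$ together with $(1-\theta_n^2)\sum_j\theta_n^{2j}\le1$ --- gives the stronger conclusion $\ee\exp(\mu' X_{1,l,n})\le C$ uniformly for $|\mu'|$ in a fixed neighbourhood of $0$, hence a uniform sub-exponential tail, after which (B) and (C) follow since the thresholds $\sqrt n x/(b_nm)\ge(\sqrt n/b_n)^{2/5}x$ and $\varepsilon(\sqrt n/b_n)^{2/5}$ beat the polynomial prefactors. Two cautions to make this airtight: the ``Cauchy--Schwarz/Jensen/Markov device of Lemma \ref{mdp-lem1}'' by itself does \emph{not} produce the uniform bound at scale $\sqrt{1-\theta_n^2}$ (it is precisely what confines the paper to the scale $1-\theta_n^2$), so your argument really does rest on the sub-Gaussian MGF step; and for $l\ne0$ the summand $j=l$ in $\sum_j\theta_n^j\xi_{1+l-j}$ equals $\theta_n^l\xi_1$ and must be split off as a separate (trivially sub-exponential) term $\theta_n^l\xi_1^2\sqrt{1-\theta_n^2}$ before conditioning on $\xi_1$, since it is not independent of the conditioning variable. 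With those points explicit, your proof is complete and, if anything, slightly cleaner than the paper's.
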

\begin{proof}  Set
$$K_m(\theta_n)=\sum_{j=1}^{m-1}\theta_n^{j}
$$
then it is easy to see that
\begin{equation}\label{mdp-prop-0}
\lim_{n\to\infty}\theta_n^{2m}=0\ \ \ \text{and} \ \ \
\lim_{n\to\infty}(1-\theta_n^2) K_m(\theta_n)= 2.
\end{equation}
For every $0\le l\le M$, let
$$
X_{1,l,n}:=\sqrt{1-\theta_n^2}U_{1,l,m,n}
$$
then by the properties of $U_{1,l,m,n}$, we have
$$
\lim_{n\to\infty}\ee X_{1,0,n}^2=4(\ee\xi_0^2)^2,\ \ \text{and}\ \  \lim_{n\to\infty}\ee
X_{1,l,n}^2=2(\ee\xi_0^2)^2, \ \ l\ne 0.
$$
Furthermore, from Corollary \ref{cor-0}, for any $0\le l\le M$, it
follows that
 \begin{equation}\label{mdp-prop-1}
 \lim_{n\to\infty}\frac{1}{m}Var(X_{1,l,n}+\cdots+X_{m,l,n})=4(\ee\xi_0^2)^2
  \end{equation}
    and
 \begin{equation}\label{mdp-prop-2}
 \lim_{n\to\infty}\frac{1}{m}\sum_{k=1}^mk\ee(X_{1,l,n}X_{k+1,l,n})=0.
 \end{equation}
Next, we need to check the conditions (B) and (C) of Lemma
\ref{lem-miao} for the random variable $X_{1,l,n}$, namely, for any
$M>0$,
 \begin{equation}\label{C-C}
\frac{n}{b_n^2m}\int^\infty_Me^x\pp\left(|X_{1,l,n}|\ge\frac{\sqrt
nx}{ b_nm} \right)dx\to 0
\end{equation}
and for any $\varepsilon>0$,
\begin{equation}\label{C-D}
\left(\frac{\sqrt{n}}{b_n}\right)^{16/5}\pp\left(|X_{1,l,n}|>\varepsilon\left(\frac{\sqrt{n}}{b_n}\right)^{2/5}\right)\to
0,
\end{equation}
where we take $\gamma=2/3$.
However, from the definition of $X_{1,l,n}$,
$$
X_{1,l,n}=\sqrt{1-\theta_n^2}\left(\sum_{j=1}^{m-1}\theta_n^j\xi_{1+l-j}\xi_1+\sum_{j=1}^{m-1}\xi_{1+l}\xi_{1-j}\theta_n^j+\xi_{1+l}\xi_1-\theta^l_n\ee\xi^2_0\right)
$$
it is enough to show that (\ref{C-C}) and (\ref{C-D}) hold for the
term
$\sqrt{1-\theta_n^2}\sum_{j=1}^{m-1}\theta_n^j\xi_{1+l-j}\xi_1$, and
the proofs of the others are similar. By the conditions
$$
m(1-\theta_n^2)\to\infty \ \ \  \text{and}\ \  \ \frac{b_nm^{5/3}}{\sqrt
n}\to 0,
$$
we know that for all $n$ sufficient large
$$
\frac{\sqrt n}{ b_nm}\sqrt{1-\theta_n^2}\ge \frac{\sqrt n}{
b_nm^{3/2}}\ge\left(\frac{\sqrt n}{ b_n}\right)^{1/10}.
$$
From Cauchy-Schwarz's inequality, we have
$$
\aligned
 &\ee\exp\left\{\frac{\alpha(1-\theta_n^2)}{3}\sum_{j=1}^{m-1}\theta_n^j|\xi_{1+l-j}\xi_1|\right\}\\
 \le & \ee\exp\left\{\frac{\alpha(1-\theta_n^2)}{6}\sum_{j=1}^{m-1}\theta_n^{j}(\xi_{1+l-j}^2+\xi_1^2)\right\}\\
 \le& \left(\ee\exp\left\{\frac{\alpha(1-\theta_n^2)}{3}\sum_{j=1}^{m-1}\theta_n^{j}\xi_{1+l-j}^2\right\}\right)^{1/2}
 \left(\ee\exp\left\{\frac{\alpha(1-\theta_n^2)}{3}\sum_{j=1}^{m-1}\theta_n^{j}\xi_{1}^2\right\}\right)^{1/2}\\
 \le &\ee\exp\left\{\frac{\alpha(1-\theta_n^2)}{3}K_m(\theta_n)\xi_{1}^2\right\}\le \ee e^{\alpha \xi_1^2},
 \endaligned
$$
so, for any $M>0$,
 \begin{equation}
 \aligned
&\frac{n}{b_n^2m}\int^\infty_Me^x\pp\left(\left|(1-\theta_n^2)\sum_{j=1}^{m-1}\theta_n^j\xi_{1+l-j}\xi_1\right|\ge\frac{\sqrt
nx}{ b_nm}\sqrt{(1-\theta_n^2)} \right)dx\\
\le
&\frac{n}{b_n^2}\int^\infty_Me^x\pp\left(\frac{\alpha(1-\theta_n^2)}{3}\sum_{j=1}^{m-1}\theta_n^j\left|\xi_{1+l-j}\xi_1\right|\ge\left(\frac{\sqrt
n}{ b_n}\right)^{1/10}\frac{\alpha x}{3} \right)dx\\
\le & \ee e^{\alpha
\xi_1^2}\frac{n}{b_n^2}\int^\infty_M\exp\left\{-\left[\left(\frac{\sqrt
n}{ b_n}\right)^{1/10}\frac{\alpha }{3}-1 \right]x\right\}dx\to 0.
\endaligned
\end{equation}
In addition, from the fact that
$$
\frac{(1-\theta_n^2)^{1/2}}{(\sqrt{n}/b_n)^{-3/10}}\to \infty,
$$
we have,
for any $\varepsilon>0$,
\begin{equation}
\aligned
 &\left(\frac{\sqrt{n}}{b_n}\right)^{16/5}\pp\left(\left|(1-\theta_n^2)\sum_{j=1}^{m-1}\theta_n^j\xi_{1+l-j}\xi_1\right|>\varepsilon\left(\frac{\sqrt{n}}{b_n}\right)^{2/5}\sqrt{(1-\theta_n^2)}\right)\\
\le &\left(\frac{\sqrt{n}}{b_n}\right)^{16/5}\pp\left(\left|(1-\theta_n^2)\sum_{j=1}^{m-1}\theta_n^j\xi_{1+l-j}\xi_1\right|>\varepsilon\left(\frac{\sqrt{n}}{b_n}\right)^{1/10}\right)\\
\le & \ee e^{\alpha
\xi_1^2}\left(\frac{\sqrt{n}}{b_n}\right)^{16/5}\exp\left\{-\varepsilon\left(\frac{\sqrt{n}}{b_n}\right)^{1/10}\frac{\alpha
}{3}\right\}\to 0.
\endaligned
\end{equation}
Therefore, the conditions in Lemma \ref{lem-miao} are satisfied and
the desired results of the proposition can be obtained.
\end{proof}

\subsection{Exponential approximation} In this subsection, we shall establish the asymptotic negligibility of the term
$\frac{\sqrt{1-\theta_n^2}}{b_n\sqrt{n-l}}\sum_{k=1}^{n-l}(U_{k,l,m,n}-U_{k,l,n})$
as $n\to\infty$.
For all $p\ge 0$ and $k\ge 1$, set
\begin{equation}\label{W}
W_{k,p}=\xi_k\xi_{k-p}.
\end{equation}

\begin{lem}\label{mdp-lem2}
Let the assumptions of Proposition \ref{mdp-prop} hold.

(1) There exist $\alpha_0$ and $\beta_0$ such that for all $p\ge 1$, $n\ge 1$ and $t\ge 0$
 \begin{equation}\label{mdp-lem2-1}
  \pp\left(\max_{j\le n}\left|\sum_{k=1}^jW_{k,p}\right|\ge t\right)\le 36\exp\left(-\frac{t^2}{\alpha_0n+\beta_0t}\right).
 \end{equation}

 (2) For all $t>0$, there exist $N\ge 1$, $A,B>0$ such that, for all $n\ge N$ and $0\le l\le M$,
 \begin{equation}\label{mdp-lem2-2}
 \aligned
 & \pp\left(\max_{j\le n-l}\left|\sum_{k=1}^{j}(U_{k,l,m,n}-U_{k,l,n})\right|\ge tb_n\frac{\sqrt{n-l}}{\sqrt{1-\theta_n^2}}\right)\\
  \le&
72\left(1-\exp\left(-\frac{b_n^2t^2}{(At+B)K_n\theta_n^m\sqrt{1-\theta_n^2}}\right)
\right)^{-1}\exp\left(-\frac{b_n^2t^2}{(At+B)K_n\theta_n^m\sqrt{1-\theta_n^2}}\right),
 \endaligned
\end{equation}
where $K_n=(1-\theta_n)^{-2}$ and $K_n\theta_n^m\sqrt{1-\theta_n^2}\to0$ as $n\to\infty$.

(3) For all $t>0$,
$$\lim_{n\to\infty}\frac{1}{b_n^2}\log\pp\left(\frac{\sqrt{1-\theta_n^2}}{b_n\sqrt{n-l}}\left|\sum_{k=1}^{n-l}(U_{k,l,m,n}-U_{k,l,n})\right|\ge t\right)= -\infty.
$$
\end{lem}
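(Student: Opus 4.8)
The plan is to prove the three parts in order: establish the concentration estimate (\ref{mdp-lem2-1}) first, bootstrap it to the maximal estimate (\ref{mdp-lem2-2}), and then read off the last assertion by taking logarithms. The guiding observation for (\ref{mdp-lem2-1}) is that for each fixed $p\ge1$ the sequence $\{W_{k,p}\}_k=\{\xi_k\xi_{k-p}\}_k$ is a martingale difference sequence for the natural filtration $\FF_k=\sigma(\xi_i:i\le k)$: since $k-p\le k-1$, the factor $\xi_{k-p}$ is $\FF_{k-1}$-measurable while $\xi_k$ is independent of $\FF_{k-1}$ and centered, so $\ee(W_{k,p}\mid\FF_{k-1})=\xi_{k-p}\,\ee\xi_k=0$. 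The Gaussian integrability assumption makes $\xi_0$ sub-Gaussian, whence the conditional Laplace transform obeys $\ee(e^{\lambda W_{k,p}}\mid\FF_{k-1})\le\exp(C\lambda^2\xi_{k-p}^2)$ for a constant $C$. Consequently, writing $S_N=\sum_{k=1}^N W_{k,p}$, the process $M_N^\lambda=\exp\big(\lambda S_N-C\lambda^2\sum_{k=1}^N\xi_{k-p}^2\big)$ is a nonnegative supermartingale with $M_0^\lambda=1$. I would then combine Doob's maximal inequality for $M_N^\lambda$ with a control of $\sum_k\xi_{k-p}^2$ through its exponential moment, and optimise over $\lambda$ (restricted by $C\lambda^2<\alpha$); performing this for $\pm S_N$ produces the two-sided Bernstein-type maximal bound (\ref{mdp-lem2-1}), the numerical factor $36$ absorbing the union over the two tails and the maximal-inequality constants.

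For (\ref{mdp-lem2-2}) I would first record the exact tail identity obtained by expanding $X_{s,n}=\sum_{p\ge0}\theta_n^p\xi_{s-p}$ inside $U_{k,l,n}$ (see (\ref{zu})) and subtracting the truncation (\ref{mdp-m-f}):
$$
U_{k,l,n}-U_{k,l,m,n}=\theta_n^{\,l}\sum_{p\ge m-l}\theta_n^{\,p}W_{k,p}+\theta_n^{-l}\sum_{p\ge m+l}\theta_n^{\,p}W_{k+l,p},
$$
where the prefactors $\theta_n^{\pm l}$ are bounded because $0\le l\le M$ and $\theta_n\to1$. Summing over $k=1,\dots,N$, taking $\max_{N\le n-l}$, and applying the triangle inequality over the two geometric series bounds the maximum by expressions of the form $\sum_p\theta_n^{\,p}A_p$ with $A_p=\max_{N\le n}\big|\sum_{k=1}^N W_{k,p}\big|$; by stationarity the shifted partial sums $\sum W_{k+l,p}$ may be replaced in distribution by $A_p$. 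I would then split the threshold $T=tb_n\sqrt{n-l}/\sqrt{1-\theta_n^2}$ between the two series and, within each, distribute it over $p\ge m-l$ by geometric weights so that the per-index bound supplied by (\ref{mdp-lem2-1}) becomes $36\,e^{-iE}$ with $i$ the running index and $E$ the exponent displayed in (\ref{mdp-lem2-2}); summing $\sum_{i\ge1}e^{-iE}=e^{-E}/(1-e^{-E})$ and doubling for the two pieces yields the factor $72$ and the asserted inequality. The scale $K_n\theta_n^m\sqrt{1-\theta_n^2}$ in $E$, with $K_n=(1-\theta_n)^{-2}$, is produced by the geometric tails in $\theta_n$ starting at $p=m-l$ that enter both the allocated thresholds and the variance proxy. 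Finally $K_n\theta_n^m\sqrt{1-\theta_n^2}\to0$ is checked from the choice of $m$ in (\ref{m-p}), since $\log\big(K_n\theta_n^m\sqrt{1-\theta_n^2}\big)\sim\tfrac32|\log(1-\theta_n)|-m(1-\theta_n)\to-\infty$ precisely because $m(1-\theta_n)/|\log(1-\theta_n)|\to\infty$.

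The last assertion is then immediate. Bounding the single sum at $N=n-l$ by the maximum and applying (\ref{mdp-lem2-2}) with the above $T$ gives
$$
\frac{1}{b_n^2}\log\pp\left(\frac{\sqrt{1-\theta_n^2}}{b_n\sqrt{n-l}}\Big|\sum_{k=1}^{n-l}(U_{k,l,m,n}-U_{k,l,n})\Big|\ge t\right)\le\frac{1}{b_n^2}\Big(\log72-E-\log(1-e^{-E})\Big).
$$
Since $E/b_n^2=t^2\big/\big((At+B)K_n\theta_n^m\sqrt{1-\theta_n^2}\big)\to+\infty$ while $e^{-E}\to0$, the right-hand side tends to $-\infty$, which is the claim.

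I expect the main obstacle to lie in part (\ref{mdp-lem2-1}): securing a genuinely Bernstein-type maximal inequality with the sharp denominator $\alpha_0 n+\beta_0 t$ for the unbounded, merely sub-exponential martingale differences $W_{k,p}$ (in particular, handling the random quadratic variation $\sum_k\xi_{k-p}^2$ cleanly through the supermartingale and its exponential-moment truncation). A second delicate point is the bookkeeping in (\ref{mdp-lem2-2}), namely choosing the geometric weights in the threshold allocation so as to reproduce exactly the exponent $E$ and the geometric prefactor $e^{-E}/(1-e^{-E})$; once these are in place the limit in the last assertion follows at once from $K_n\theta_n^m\sqrt{1-\theta_n^2}\to0$.
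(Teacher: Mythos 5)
Your treatment of parts (2) and (3) is essentially the paper's own argument: the same triangle-inequality split of $U_{k,l,n}-U_{k,l,m,n}$ into two tail series in $W_{k,p}$ (your exact identity with the prefactors $\theta_n^{\pm l}$ is correct and matches the paper's form $\theta_n^m\sum_{p\ge0}\theta_n^p W_{k,m+p-l}$ after re-indexing), the same weighted union bound over $p$ using the weights $(p+1)\theta_n^p$ with $\sum_p(p+1)\theta_n^p=K_n=(1-\theta_n)^{-2}$, the same geometric summation producing $e^{-E}/(1-e^{-E})$ and the factor $72=2\times 36$, and the same verification that $K_n\theta_n^m\sqrt{1-\theta_n^2}\to0$ from $m(1-\theta_n)/|\log(1-\theta_n)|\to\infty$, from which (3) follows by taking logarithms. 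The one place you genuinely diverge is part (1): the paper does not prove the maximal inequality (\ref{mdp-lem2-1}) at all, but simply quotes it as Lemma 17 of Mas--Menneteau \cite{MaMl}, whereas you propose a self-contained derivation via the exponential supermartingale $M_N^\lambda=\exp\bigl(\lambda S_N-C\lambda^2\sum_{k\le N}\xi_{k-p}^2\bigr)$. That route is sound in principle --- conditionally on $\FF_{k-1}$ the variable $\xi_{k-p}$ is frozen and $\xi_k$ is centered sub-Gaussian, so the conditional Laplace bound holds for all $\lambda\in\rr$ --- and it buys a proof independent of \cite{MaMl}; but as you yourself note, it is the one step you have not actually carried out: you still owe the decoupling of $S_N$ from the random quadratic variation $\sum_k\xi_{k-p}^2$ (truncating on the event $\{\sum_{k\le n}\xi_{k-p}^2\le Kn\}$ and paying an $e^{-cn}$ price via the Gaussian integrability condition) and the optimization in $\lambda$ that produces the Bernstein denominator $\alpha_0 n+\beta_0 t$ uniformly in $p$. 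Since everything downstream only uses (\ref{mdp-lem2-1}) as a black box, either completing that computation or simply citing \cite{MaMl} as the paper does would close the argument.
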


\begin{proof} (1) This is Lemma 17 in \cite{MaMl}.

(2) Firstly, we have
 \begin{equation}\label{mdp-lem2-5}
 \aligned
  \left|\sum_{k=1}^{j}(U_{k,l,m,n}-U_{k,l,n})\right|
  \le& \left|\sum_{k=1}^{j}\theta_n\xi_k(X_{k+l-1,n}-X_{k+l-1,m,n})\right|\\
  &\ \ +\left|\sum_{k=1}^{j}\theta_n\xi_{k+l}(X_{k-1,n}-X_{k-1,m,n})\right|.
  \endaligned
 \end{equation}
 Since
 $$
 X_{k+l-1,n}-X_{k+l-1,m,n}=\theta_n^{m-1}\sum_{p=0}^\infty\theta_n^p\xi_{k+l-m-p},
 $$
 we can get
 $$
 \aligned
  \left|\sum_{k=1}^{j}\theta_n\xi_k(X_{k+l-1,n}-X_{k+l-1,m,n})\right|\le
  \theta_n^m\sum_{p=0}^\infty\theta_n^p\left|\sum_{k=1}^{j}W_{k,m+p-l}\right|.
 \endaligned
 $$
 Now it is not difficult to show the fact: for any $n\ge 1$,
 $$
  K_n=\sum_{p=0}^\infty(p+1)\theta_n^p=(1-\theta_n)^{-2}.
 $$
 Hence, by (\ref{mdp-lem2-1}), we have
 \begin{equation}\label{mdp-lem2-7}
 \aligned
 &\pp\left(\max_{1\le j\le n-l} \left|\sum_{k=1}^{j}\theta_n\xi_k(X_{k+l-1,n}-X_{k+l-1,m,n})\right|
 >tb_n\frac{\sqrt{n-l}}{2\sqrt{1-\theta_n^2}}\right)\\
 \le & \pp\left(\sum_{p=0}^\infty(p+1)\frac{\theta_n^p}{p+1}\max_{1\le j\le n-l} \left|\sum_{k=1}^{j}W_{k,m+p-l}\right|  > \sum_{p=0}^\infty(p+1)\theta_n^p\frac{tb_n\sqrt{n-l}}{2K_n\sqrt{1-\theta_n^2}\theta_n^m}\right)\\
 \le & \sum_{p=0}^\infty\pp\left(\max_{1\le j\le n-l} \left|\sum_{k=1}^{j}W_{k,m+p-l}\right| >
 \frac{tb_n(p+1)\sqrt{n-l}}{2K_n\sqrt{1-\theta_n^2}\theta_n^m}\right)\\
 \le & 36\sum_{p=0}^\infty\exp\left(-\frac{b_n^2t^2_{m,p}(t)}{\alpha_0+\beta_0t_{m,p}(t)b_n/\sqrt{n-l}}\right)
 \endaligned
 \end{equation}
 where
 $$
 t_{m,p}(t)=\frac{t(p+1)}{2K_n\theta_n^m\sqrt{1-\theta_n^2}}.
 $$
 By noting that
 $$
 \frac{m(1-\theta_n)}{|\log(1-\theta_n)|}\to \infty\ \ \ \Longrightarrow \ \ \lim_{n\to\infty}K_n\theta_n^m\sqrt{1-\theta_n^2}=0
 $$
and from the assumption of $b_n$, there exist constants $N\in\mathbb{N}$, $A,B>0$, such that for all $n\ge N$, $l\ge 0$, and we obtain
  \begin{equation}\label{mdp-lem2-6}
   \frac{t^2_{m,p}(t)}{\alpha_0+\beta_0t_{m,p}(t)b_n/\sqrt{n-l}}\ge c(t)\frac{p+1}{K_n\theta_n^m\sqrt{1-\theta_n^2}},\ \ \ \
   c(t):=\frac{t^2}{At+B}.
  \end{equation}
Hence, by (\ref{mdp-lem2-7}) and (\ref{mdp-lem2-6}), we get
\begin{equation}
 \aligned
 &\pp\left(\max_{1\le j\le n-l} \left|\sum_{k=1}^{j}\theta_n\xi_k(X_{k+l-1,n}-X_{k+l-1,m,n})\right|
 >tb_n\frac{\sqrt{n-l}}{2\sqrt{1-\theta_n^2}}\right)\\
 \le & 36\sum_{p=0}^\infty\exp\left(-b_n^2c(t)\frac{p+1}{K_n\theta_n^m\sqrt{1-\theta_n^2}}\right)\\
= & 36\left(1-\exp\left(-\frac{b_n^2c(t)}{K_n\theta_n^m\sqrt{1-\theta_n^2}}\right)\right)^{-1}\exp\left(-\frac{b_n^2c(t)}{K_n\theta_n^m\sqrt{1-\theta_n^2}}\right).
 \endaligned
 \end{equation}
 For the same reason, we can give the estimate of the second term in (\ref{mdp-lem2-5}), so the proof of (\ref{mdp-lem2-2}) can be completed..

 (3) It follows obviously by (\ref{mdp-lem2-2}).
\end{proof}

At last, the proof of Theorem \ref{mdp-thm1} can be completed by Lemma \ref{lem1},
Proposition \ref{mdp-prop} and (3) in Lemma \ref{mdp-lem2}.

\section{Proof of Theorem \ref{mdp-thm2}}
Let $Y_{k,l,n}=X_{k+l,n}X_{k,n}-\ee
 X_{k+l,n}X_{k,n}$ for any $1\le k\le n$ and $0\le l\le M$.
 Since
\begin{equation}\label{mdp-thm2-p1}
\aligned
 \sum_{l=0}^Ma_{l,n}\sum_{k=1}^{n-l}Y_{k,l,n}
 =\sum_{k=1}^{n-M}\sum_{l=0}^Ma_{l,n}Y_{k,l,n}+\sum_{l=0}^{M-1}\sum_{k=n-M+1}^{n-l}a_{l,n}Y_{k,l,n},
\endaligned
\end{equation}
then the desired result is equivalent to showing
\begin{equation}\label{mdp-thm2-p2}
\lim_{n\to\infty}\frac{1}{b_n^2}\log\pp\left(\frac{(1-\theta_n^2)^{3/2}}{b_n\sqrt{n}}
\left|\sum_{k=1}^{n-M}\sum_{l=0}^Ma_{l,n}Y_{k,l,n}\right|\ge
r\right)
 =-\frac{r^2}{2\Sigma^2}
\end{equation}
and
\begin{equation}\label{mdp-thm2-p3}
\lim_{n\to\infty}\frac{1}{b_n^2}\log\pp\left(\frac{(1-\theta_n^2)^{3/2}}{b_n\sqrt{n}}
\left|\sum_{l=0}^{M-1}\sum_{k=n-M+1}^{n-l}a_{l,n}Y_{k,l,n}\right|\ge
r\right)
 =-\infty.
\end{equation}
As the similar proof of Theorem \ref{mdp-thm1}, in order to obtain
(\ref{mdp-thm2-p2}), it is enough to show that for any
$\lambda\in\rr$, it follows
\begin{equation}\label{mdp-thm2-p4}
 \lim_{n\to\infty}\frac{1}{b_n^2}\log\ee\exp\left\{\lambda\frac{b_n\sqrt{1-\theta_n^2}}{\sqrt{n}}
 \sum_{k=1}^{n-M}\sum_{l=0}^Ma_{l,n}U_{k,l,m,n}\right\}
 =\frac{\lambda^2\Sigma^2}{2},
\end{equation}
where the sequence $\{m\}$ satisfies the properties in (\ref{m-p}).
Let
\begin{equation}\label{Y-hat}
\hat Y_{k,m,n}:=\sum_{l=0}^Ma_{l,n}U_{k,l,m,n},
\end{equation}
 then it is
easy to see that $\{\hat Y_{k,m,n}\}_{1\le k\le n-M}$ is a strictly
stationary sequence with $m+M$-dependent structure. Hence from Lemma
\ref{lem-miao}, (\ref{mdp-thm2-p4}) is equivalent to proving
\begin{equation}\label{mdp-thm2-p5}
\lim_{n\to\infty}\frac{1-\theta_n^2}{m}\sum_{k=1}^mk\ee(\hat
Y_{1,m,n}\hat Y_{k+1,m,n})=0
\end{equation}
and
\begin{equation}\label{mdp-thm2-p6}
 \lim_{n\to\infty}\frac{1-\theta_n^2}{m}Var(\hat Y_{1,m,n}+\cdots+\hat Y_{m,m,n})=\Sigma^2.
  \end{equation}
 For $i<k$, we have
 $$
\aligned
 \hat Y_{i,m,n}\hat
Y_{k,m,n}=& a_{0,n}^2U_{i,0,m,n}U_{k,0,m,n}+\left(\sum_{l=1}^Ma_{l,n}U_{i,l,m,n}\right)\left(\sum_{q=1}^Ma_{q,n}U_{k,q,m,n}\right)\\
&\
+a_{0,n}U_{k,0,m,n}\sum_{l=1}^Ma_{l,n}U_{i,l,m,n}+a_{0,n}U_{i,0,m,n}\sum_{q=1}^Ma_{q,n}U_{k,q,m,n}.
\endaligned
$$
From (\ref{U-2}) and Proposition \ref{prop-UU}, it follows that
$$
\ee U_{i,0,m,n}U_{k,0,m,n}=0,  \ \ \ \ \ee
\left(\sum_{q=1}^MU_{i,0,m,n}U_{k,q,m,n}\right)=0,
$$
and
$$
\ee\left(U_{k,0,m,n}\sum_{l=1}^MU_{i,l,m,n}\right)
=2(\ee\xi_0^2)^2\sum_{l=1}^M\theta_n^l\left(1_{A_1}+1_{A_2}\sum_{j=0}^{m-1-l}\theta_n^{2j}\right).
$$
Since $i<k$ and
$$
\aligned
&\left(\sum_{l=1}^MU_{i,l,m,n}\right)\left(\sum_{q=1}^MU_{k,q,m,n}\right)\\
=&\left(\sum_{l=1}^{M-1}\sum_{q=l+1}^M+\sum_{q=1}^{M-1}\sum_{l=q+1}^M\right)U_{i,l,m,n}U_{k,q,m,n}
+\sum_{l=1}^{M}U_{i,l,m,n}U_{k,l,m,n},
\endaligned
$$
then by (\ref{U-7}) and Proposition \ref{prop-UU}, we have
$$
\sum_{l=1}^{M}\ee(U_{i,l,m,n}U_{k,l,m,n})
 =
 \sum_{l=1}^{M}\theta^{2l}_n(\ee\xi^2_0)^2\left(1_{A_1}+\sum_{q=0}^{m-1-2l}\theta_n^{2q}1_{A_2}\right),
$$
$$
\aligned
&\sum_{l=1}^{M-1}\sum_{q=l+1}^M\ee(U_{i,l,m,n}U_{k,q,m,n})\\
=&\sum_{l=1}^{M-1}\sum_{q=l+1}^M\left(\theta_n^{l+q}(\ee\xi_0^2)^2\left(1_{A_1}+1_{A_2}\sum_{j=0}^{m-1-(l+q)}\theta_n^{2j}\right)\right)
  \endaligned$$
and
$$
\aligned
 &\sum_{q=1}^{M-1}\sum_{l=q+1}^M\ee(U_{i,l,m,n}U_{k,q,m,n})\\
 =&\sum_{q=1}^{M-1}\sum_{l=q+1}^M\theta_n^{l+q}(\ee\xi_0^2)^2\left(1_{A_1}+1_{A_2}\sum_{j=0}^{m-1-(l+q)}\theta_n^{2j}\right)\\
 &+\sum_{q=1}^{M-1}\sum_{l=q+1}^M\theta_n^{l-q}(\ee\xi_0^2)^2\left(1_{E_1}+1_{E_2}\sum_{j=0}^{m-1-(l-q)}\theta_n^{2j}\right).
  \endaligned
  $$
Hence we can obtain
\begin{equation}\label{Y-2}
\aligned
 &\ee(\hat Y_{i,m,n}\hat
 Y_{k,m,n})\\
 =&2(\ee\xi_0^2)^2a_{0,n}\sum_{l=1}^Ma_{l,n}\theta_n^l\left(1_{A_1}+1_{A_2}\sum_{j=0}^{m-1-l}\theta_n^{2j}\right)\\
 &\
 +(\ee\xi^2_0)^2\sum_{l=1}^{M}a_{l,n}^2\theta^{2l}_n\left(1_{A_1}+1_{A_2}\sum_{q=0}^{m-1-2l}\theta_n^{2q}\right)\\
 &\ +
 (\ee\xi_0^2)^2\sum_{l=1}^{M-1}\sum_{q=l+1}^Ma_{l,n}a_{q,n}\theta_n^{l+q}\left(1_{A_1}+1_{A_2}\sum_{j=0}^{m-1-(l+q)}\theta_n^{2j}\right)\\
 &\ +(\ee\xi_0^2)^2\sum_{q=1}^{M-1}\sum_{l=q+1}^Ma_{l,n}a_{q,n}\theta_n^{l+q}\left(1_{A_1}+1_{A_2}\sum_{j=0}^{m-1-(l+q)}\theta_n^{2j}\right)\\
 &\ +(\ee\xi_0^2)^2\sum_{q=1}^{M-1}\sum_{l=q+1}^Ma_{l,n}a_{q,n}\theta_n^{l-q}\left(1_{E_1}+1_{E_2}\sum_{j=0}^{m-1-(l-q)}\theta_n^{2j}\right)\\
 =:& I_{1,i,k,n}+I_{2,i,k,n}+I_{3,i,k,n}+I_{4,i,k,n}+I_{5,i,k,n}.
\endaligned
\end{equation}
Furthermore, since
$$
\aligned
 \hat
Y_{i,m,n}^2=&a_{0,n}^2U_{i,0,m,n}^2
+2a_{0,n}U_{i,0,m,n}\sum_{l=1}^Ma_{l,n}U_{i,l,m,n}
+\left(\sum_{l=1}^Ma_{l,n}U_{i,l,m,n}\right)^2
\endaligned
$$
and
$$
\aligned \left(\sum_{l=1}^Ma_{l,n}U_{i,l,m,n}\right)^2=&
\left(\sum_{l=1}^{M-1}\sum_{q=l+1}^M+\sum_{q=1}^{M-1}\sum_{l=q+1}^M\right)a_{l,n}a_{q,n}U_{i,l,m,n}U_{i,q,m,n}\\
&\ \ \  \ \ \  +\sum_{l=1}^{M}a_{l,n}^2U_{i,l,m,n}^2,
\endaligned
$$
then from Proposition \ref{prop-U} and Proposition \ref{prop-UUU},
we have
\begin{equation}\label{Y-3}
\aligned
&\ee(\hat Y_{i,m,n}^2)\\
=& a_{0,n}^2\left(\ee\xi_0^4+(\ee\xi_0^2)^2\left[4\sum_{j=1}^{m-1}\theta_n^{2j}-1\right]\right)\\
&\
+2a_{0,n}\sum_{l=1}^Ma_{l,n}\left(\theta_n^{l}(\ee\xi_0^4)-\theta_n^{l}(\ee\xi_0^2)^2+2(\ee\xi_0^2)^2\theta_n^l\sum_{j=1}^{m-1-l}
\theta_n^{2j}\right)\\
&\ +
\sum_{l=1}^{M-1}\sum_{q=l+1}^Ma_{l,n}a_{q,n}\left(\theta_n^{l+q}(\ee\xi_0^4)-2\theta_n^{l+q}(\ee\xi_0^2)^2+\theta_n^{q-l}(\ee\xi_0^2)^2\sum_{j=0}^{m-1-(q-l)}\theta_n^{2j}\right)\\
&\
+\sum_{q=1}^{M-1}\sum_{l=q+1}^Ma_{l,n}a_{q,n}\left(\theta_n^{l+q}(\ee\xi_0^4)-2\theta_n^{l+q}(\ee\xi_0^2)^2+\theta_n^{l-q}(\ee\xi_0^2)^2\sum_{p=0}^{m-1-(l-q)}\theta_n^{2p}\right)\\
&\ +
\sum_{l=1}^{M}a_{l,n}^2\left(\theta_n^{2l}(\ee\xi_0^4)-2\theta_n^{2l}(\ee\xi_0^2)^2+(\ee\xi_0^2)^2\left(2\sum_{j=1}^{m-1}\theta_n^{2j}+1\right)\right).
\endaligned
\end{equation}

Now we prove the relations (\ref{mdp-thm2-p5}) and
(\ref{mdp-thm2-p6}). From (\ref{Y-2}), in order to show
(\ref{mdp-thm2-p5}), we only prove the following claim
\begin{equation}\label{Y-1}
\aligned
&\lim_{n\to\infty}\frac{1-\theta_n^2}{m}\sum_{k=1}^mkI_{1,1,k+1,n}\\
=&
2(\ee\xi_0^2)^2\lim_{n\to\infty}\frac{1-\theta_n^2}{m}a_{0,n}\sum_{k=1}^mk\sum_{l=1}^Ma_{l,n}\theta_n^l\left(1_{A_1}+1_{A_2}\sum_{j=0}^{m-1-l}\theta_n^{2j}\right)=0,
\endaligned
\end{equation}
and the proofs of other terms are similar. In fact, by the
definitions of $A_1,A_2$, we have
$$
\aligned
&\sum_{k=1}^mk\sum_{l=1}^Ma_{l,n}\theta_n^l\left(1_{A_1}+1_{A_2}\sum_{j=0}^{m-1-l}\theta_n^{2j}\right)\\
=&\sum_{l=1}^Ma_{l,n}\theta_n^l\sum_{k=1}^{M+1}k1_{A_1}
+\sum_{l=1}^Ma_{l,n}\theta_n^{l}l\left(\sum_{j=0}^{m-1-l}\theta_n^{2j}\right)
\endaligned
$$
which implies (\ref{Y-1}). Next we prove (\ref{mdp-thm2-p6}). Since
$$
Var(\hat Y_{1,m,n}+\cdots+\hat Y_{m,m,n})=\sum_{k=1}^m\ee\hat
Y_{k,m,n}^2+2\sum_{i=1}^{m-1}\sum_{k=i+1}^{m}\ee\hat Y_{k,m,n}\hat
Y_{i,m,n},
$$
then from (\ref{Y-3}), we have
$$
\aligned
 &\lim_{n\to\infty}\frac{1-\theta_n^2}{m} \sum_{k=1}^m\ee\hat
Y_{k,m,n}^2\\
=&\left(4a_{0}^2+4a_0\sum_{l=1}^M
a_l+2\sum_{l=1}^{M-1}\sum_{q=l+1}^M a_la_q+2\sum_{l=1}^M
a_l^2\right)(\ee\xi_0^2)^2.
\endaligned
$$
Moreover, by (\ref{Y-2}), we have
$$
\aligned
&\frac{1-\theta_n^2}{m}\sum_{i=1}^{m-1}\sum_{k=i+1}^{m}I_{1,i,k,n}\\
=&2(\ee\xi_0^2)^2a_{0,n}\frac{1-\theta_n^2}{m}\sum_{l=1}^Ma_{l,n}\theta_n^l\sum_{i=1}^{m-1}\left((l+1)+\sum_{j=0}^{m-1-l}\theta_n^{2j}\right)\\
\to &2(\ee\xi_0^2)^2a_{0}\sum_{l=1}^Ma_{l}.
\endaligned
$$
Similarly, we have
$$
\frac{1-\theta_n^2}{m}\sum_{i=1}^{m-1}\sum_{k=i+1}^{m}I_{2,i,k,n}
\to(\ee\xi_0^2)^2\sum_{l=1}^Ma_{l}^2,$$
$$
\frac{1-\theta_n^2}{m}\sum_{i=1}^{m-1}\sum_{k=i+1}^{m}I_{3,i,k,n}\to
(\ee\xi_0^2)^2\sum_{l=1}^{M-1}\sum_{q=l+1}^Ma_{l}a_{q},$$
$$
\frac{1-\theta_n^2}{m}\sum_{i=1}^{m-1}\sum_{k=i+1}^{m}I_{4,i,k,n}\to
(\ee\xi_0^2)^2\sum_{q=1}^{M-1}\sum_{l=q+1}^Ma_{l}a_{q},$$
$$
\frac{1-\theta_n^2}{m}\sum_{i=1}^{m-1}\sum_{k=i+1}^{m}I_{5,i,k,n}\to
(\ee\xi_0^2)^2\sum_{q=1}^{M-1}\sum_{l=q+1}^Ma_{l}a_{q},$$
 so, it
follows that
$$
\aligned
&\lim_{n\to\infty}\frac{1-\theta_n^2}{m}\sum_{i=1}^{m-1}\sum_{k=i+1}^{m}\ee\hat
Y_{k,m,n}\hat Y_{i,m,n}\\
=&
(\ee\xi_0^2)^2\left(2a_{0}\sum_{l=1}^Ma_{l}+\sum_{q=1}^{M-1}\sum_{l=q+1}^Ma_{l}a_{q}+\left(\sum_{q=1}^{M}a_q\right)^2\right).
\endaligned
$$ From the above discussion, we have
$$
\lim_{n\to\infty}\frac{1-\theta_n^2}{m}Var(\hat
Y_{1,m,n}+\cdots+\hat
Y_{m,m,n})=4\left(\sum_{j=0}^Ma_j\right)^2(\ee\xi_0^2)^2.
$$
At last, we need to show (\ref{mdp-thm2-p3}). Since for any $r>0$,
we have
$$
\aligned
 &\pp\left(\frac{(1-\theta_n^2)^{3/2}}{b_n\sqrt{n}}
\left|\sum_{l=0}^{M-1}\sum_{k=n-M+1}^{n-l}a_{l,n}Y_{k,l,n}\right|\ge
r\right)\\
\le
&\sum_{l=0}^{M-1}\sum_{k=n-M+1}^{n-l}\pp\left(\frac{(1-\theta_n^2)^{3/2}}{b_n\sqrt{n}}
\left|a_{l,n}Y_{k,l,n}\right|\ge \frac{2r}{M(M+1)}\right)
\endaligned
$$
then from the stationarity of $Y_{k,l,n} (k=0,1,\cdots, n-l)$ and
the fact that for any $l$, $|a_{l,n}|<N_l$ for some $N_l>0$, it is
enough to show that for any $0\le l\le M$,
\begin{equation}\label{Y-6}
\lim_{n\to\infty}\frac{1}{b_n^2}\log\pp\left(\frac{(1-\theta_n^2)^{3/2}}{b_n\sqrt{n}}
\left|Y_{k,l,n}\right|\ge \frac{2r}{N_lM(M+1)}\right)\to -\infty.
\end{equation}
However, from the definition of $Y_{k,l,n}$ and the fact that
$\ee(X_{l,n}X_{0,n})=\theta_n^l(1-\theta_n^2)^{-1}\ee\xi_0^2$, we have
$$
\aligned
 &\lim_{n\to\infty}\frac{1}{b_n^2}\log\pp\left(\frac{(1-\theta_n^2)^{3/2}}{b_n\sqrt{n}}
\left|Y_{k,l,n}\right|\ge \frac{2r}{N_lM(M+1)}\right)\\
\le
&\lim_{n\to\infty}\frac{1}{b_n^2}\log\pp\left(\frac{(1-\theta_n^2)^{3/2}}{b_n\sqrt{n}}
\left|X_{l,n}X_{0,n}-\ee(X_{l,n}X_{0,n})\right|\ge \frac{2r}{N_lM(M+1)}\right)\\
\le &
\lim_{n\to\infty}\frac{1}{b_n^2}\log\pp\left(\frac{(1-\theta_n^2)^{3/2}}{b_n\sqrt{n}}
\left|X_{l,n}X_{0,n}\right|\ge \frac{r}{N_lM(M+1)}\right)\\
\le &
\lim_{n\to\infty}\frac{1}{b_n^2}\log\pp\left(\frac{(1-\theta_n^2)^{1/2}}{b_n\sqrt{n}}
\left|X_{l,n}X_{0,n}\right|\ge \frac{r}{N_lM(M+1)}\right)\to
-\infty.
\endaligned
$$
Here the last limit is due to the similar proof in Lemma
\ref{mdp-lem1} .

\section{Proof of Proposition \ref{mdp-app}}

The proof of Proposition \ref{mdp-app} stems from the method of
Theorem \ref{mdp-thm2}. From the definition of $\hat\theta_n$, we
have
$$
\aligned & \frac{
\sqrt{n}}{b_n(1-\theta_n^2)^{1/2}}(\hat\theta_n-\theta_n)\\
 = & \frac{
\sqrt{n}}{b_n(1-\theta_n^2)^{1/2}}\frac{\sum_{k=1}^n(X_{k,n}X_{k-1,n}-\theta_nX_{k-1,n}^2)}{\sum_{k=1}^nX_{k-1,n}^2}\\
=&
\frac{\frac{(1-\theta_n^2)^{3/2}}{\sqrt{n}b_n}\sum_{k=1}^n(1-\theta_n^2)^{-1}(\ee\xi_0^2)^{-1}(X_{k,n}X_{k-1,n}-\theta_nX_{k-1,n}^2)}{(1-\theta_n^2)(\ee\xi_0^2)^{-1}\frac{1}{n}\sum_{k=1}^nX_{k-1,n}^2}=:\frac{r_n}{R_n}.
\endaligned
$$
Let us first prove that $(R_n-1)$ is negligible with respect to the
moderate deviation principle, i.e., to show that for any $r>0$,
\begin{equation}\label{mdp-app-1}
\lim_{n\to\infty}\frac{1}{b_n^2}\log\pp\left((1-\theta_n^2)(\ee\xi_0^2)^{-1}\frac{1}{n}\left|\sum_{k=1}^n(X_{k-1,n}^2-\ee
X_{k-1,n}^2)\right|>r\right)=-\infty,
\end{equation}
where we use the fact $\ee X_{k-1,n}^2=\ee
X_{0,n}^2=(1-\theta_n^2)^{-1}\ee\xi_0^2$. Since
$$
\aligned
&\pp\left((1-\theta_n^2)(\ee\xi_0^2)^{-1}\frac{1}{n}\left|\sum_{k=1}^n(X_{k-1,n}^2-\ee
X_{k-1,n}^2)\right|>r\right)\\
=&\pp\left(\frac{(1-\theta_n^2)^{3/2}}{b_n\sqrt
n}\left|\sum_{k=1}^n(X_{k-1,n}^2-\ee
X_{k-1,n}^2)\right|>\frac{r\sqrt{n}(\ee\xi_0^2)(1-\theta_n^2)^{1/2}}{b_n}\right),
\endaligned
$$
then by the condition (3) (which implies
$\sqrt{n(1-\theta_n^2)}b_n^{-1}\to\infty$) and Theorem
\ref{mdp-thm1} yields (\ref{mdp-app-1}). Next we only need to prove
that $r_n$ satisfies the moderate deviation principle. Let
$$
a_{0,n}=-(1-\theta_n^2)^{-1}(\ee\xi_0^2)^{-1}\theta_n\ \ \text{and}\
\  a_{1,n}=(1-\theta_n^2)^{-1}(\ee\xi_0^2)^{-1},
$$
then
$$
r_n=\frac{(1-\theta_n^2)^{3/2}}{\sqrt{n}b_n}\sum_{l=0}^1\sum_{k=0}^{n-1}a_{l,n}(X_{k+l,n}X_{k,n}-\ee
X_{k+l,n}X_{k,n}).
$$
Since $a_{0,n}\to -\infty, a_{1,n}\to\infty$, then we can not use
directly Theorem \ref{mdp-thm2} to prove the moderate deviation of
$r_n$. So we need to slightly modify the proof of Theorem
\ref{mdp-thm2}.

Now rewrite $r_n$ as
$$
\aligned
r_n=&\frac{(1-\theta_n^2)^{1/2}}{\sqrt{n}b_n(\ee\xi_0^2)}\sum_{k=1}^n\xi_k\sum_{p=0}^\infty\theta_n^p\xi_{k-1-p}\\
=: &\frac{1}{\sqrt{n}b_n}\sum_{k=1}^n\hat X_{k,n}.
\endaligned
$$
Let $m$ be a increasing sequence satisfying the properties in
(\ref{m-p}) and put
$$
\hat X_{k,m,n}=\frac{(1-\theta_n^2)^{1/2}}{(\ee\xi_0^2)}\sum_{p=0}^{m-1}\theta_n^p\xi_{k-1-p}\xi_k
$$
then $\{\hat X_{k,m,n}\}$ is a strictly stationary sequence with
$m$-dependent structure.
\begin{lem}\label{lem6-1}
For any $r>0$, we have
$$
\lim_{n\to\infty}\frac{1}{b_n^2}\log
\pp\left(\frac{1}{b_n\sqrt{n}}\left|\sum_{k=1}^n \hat X_{k,m,n}\right|\ge
r\right)=-\frac{r^2}{2}.
$$
\end{lem}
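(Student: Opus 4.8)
The plan is to verify the hypotheses of Lemma \ref{lem-miao} for the triangular array $\{\hat X_{k,m,n}\}_{1\le k\le n}$ with the constant $\sigma^2=1$; its conclusion then gives precisely the stated rate $-r^2/2=-r^2/(2\sigma^2)$. As noted before the lemma, Theorem \ref{mdp-thm2} cannot be applied directly because $a_{0,n}\to-\infty$ and $a_{1,n}\to\infty$, and the whole point of introducing $\hat X_{k,m,n}$ is to recast the problem as a clean moderate deviation for an $m$-dependent array. The crucial structural observation is that $\{\hat X_{k,m,n}\}$ is a martingale difference sequence: writing $\FF_k:=\sigma(\xi_i;\,i\le k)$, each $\hat X_{k,m,n}$ is $\FF_k$-measurable while $\ee(\hat X_{k,m,n}\mid\FF_{k-1})=0$, since $\xi_k$ is centred and independent of $\FF_{k-1}$.

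I would first use this to settle condition (D). For $i<k$ the variable $\hat X_{i,m,n}$ is $\FF_{k-1}$-measurable, so conditioning gives $\ee(\hat X_{i,m,n}\hat X_{k,m,n})=\ee[\hat X_{i,m,n}\,\ee(\hat X_{k,m,n}\mid\FF_{k-1})]=0$, exactly as in the proof of \eqref{U-2}. Hence all cross terms vanish, which makes $m^{-1}\sum_{i=1}^{m}i\,\ee(\hat X_{1,m,n}\hat X_{i+1,m,n})=0$ automatic and reduces the variance to the diagonal, $Var(\hat X_{1,m,n}+\cdots+\hat X_{m,m,n})=m\,\ee\hat X_{1,m,n}^2$. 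Computing the single-term variance via the independence of $\xi_1$ from $\{\xi_{-p}\}_{p\ge0}$,
$$\ee\hat X_{1,m,n}^2=\frac{1-\theta_n^2}{(\ee\xi_0^2)^2}\,\ee\xi_1^2\,\ee\left(\sum_{p=0}^{m-1}\theta_n^p\xi_{-p}\right)^2=(1-\theta_n^2)\sum_{p=0}^{m-1}\theta_n^{2p}=1-\theta_n^{2m}.$$
Since $m(1-\theta_n)\to\infty$ forces $\theta_n^{m}\to0$, this tends to $1$, so condition (D) holds with $\sigma^2=1$ and $m^{-1}Var(\cdots)\to1$.

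It then remains to check the scale condition (A) and the tail conditions (B)--(C). Taking $\gamma=2/3$, condition (A) becomes $b_nm^{5/3}/\sqrt n\to0$, which is part of \eqref{m-p}. For (B) and (C) I would observe that $\hat X_{1,m,n}=(\ee\xi_0^2)^{-1}\sqrt{1-\theta_n^2}\sum_{p=0}^{m-1}\theta_n^p\xi_{-p}\xi_1$ has exactly the same product form as the term $\sqrt{1-\theta_n^2}\sum_{j}\theta_n^j\xi_{1+l-j}\xi_1$ handled in the proof of Proposition \ref{mdp-prop}. The same Cauchy--Schwarz splitting, together with the Gaussian integrability hypothesis (2) and the bound $\ee\exp\{\tfrac{\alpha(1-\theta_n^2)}{3}\sum_p\theta_n^p|\xi_{-p}\xi_1|\}\le\ee e^{\alpha\xi_1^2}$, yields the required exponential moment estimate, from which (B) and (C) follow verbatim. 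With all hypotheses of Lemma \ref{lem-miao} verified and $\sigma^2=1$, its conclusion delivers the claim.

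The main obstacle is conceptual rather than computational: recognizing that the forward-only dependence on $\xi_{k-1-p}$ (as opposed to the two-sided sums present in $U_{k,l,m,n}$) turns $\hat X_{k,m,n}$ into a martingale difference sequence. This is what eliminates every off-diagonal covariance and produces the clean value $\ee\hat X_{1,m,n}^2=1-\theta_n^{2m}\to1$, and hence the exact constant $\sigma^2=1$; once this is in place the remaining verifications merely reuse the estimates already established for Proposition \ref{mdp-prop}.
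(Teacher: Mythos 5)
Your proposal is correct and follows essentially the same route as the paper's own proof: both reduce the claim to checking the hypotheses of Lemma \ref{lem-miao} with $\sigma^2=1$, both rest on the vanishing of all off-diagonal covariances (for which you supply the martingale-difference justification the paper leaves implicit) together with $\ee\hat X_{1,m,n}^2\to 1$, and both dispose of conditions (B) and (C) by reusing the exponential tail estimates (\ref{C-C}) and (\ref{C-D}) from the proof of Proposition \ref{mdp-prop}. Your explicit computation $\ee\hat X_{1,m,n}^2=(1-\theta_n^2)\sum_{p=0}^{m-1}\theta_n^{2p}=1-\theta_n^{2m}\to 1$ merely fills in a step the paper calls ``easy to see.''
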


\begin{proof}
In order to obtain the desired result, it is enough to check the
conditions in Lemma \ref{lem-miao}. Firstly, it is easy to see that
$$
\lim_{n\to\infty}\ee\hat X_{k,m,n}^2=1
$$
and for any $k\ne j$,
$$
\ee (\hat X_{k,m,n}\hat X_{j,m,n})=0.
$$
Hence we have
$$
\lim_{n\to \infty}\frac{1}{m}Var(\hat X_{1,m,n}+\cdots+\hat X_{m,m,n})=1
$$
and
$$
\lim_{n\to\infty}\frac{1}{m}\sum_{i=1}^m
i\ee(\hat X_{1,m,n}\hat X_{i+1,m,n})=0.
$$
Moreover, by using the similar proofs of (\ref{C-C}) and
(\ref{C-D}), the conditions (B) and (C) in Lemma \ref{lem-miao}
hold. So we complete the proof by using Lemma \ref{lem-miao}.
\end{proof}
\begin{lem}\label{lem6-2} For any $r>0$, we have
$$
\lim_{n\to\infty}\frac{1}{b_n^2}\log
\pp\left(\frac{1}{b_n\sqrt{n}}\left|\sum_{k=1}^n (\hat X_{k,m,n}-\hat X_{k,n})\right|\ge
r\right)=-\infty.
$$
\end{lem}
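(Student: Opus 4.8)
The plan is to recognize that $\hat X_{k,m,n}-\hat X_{k,n}$ is exactly the tail of the geometric expansion defining $\hat X_{k,n}$, so that the statement falls under the same exponential-approximation mechanism already used in Lemma \ref{mdp-lem2}. First I would record, with $W_{k,p}=\xi_k\xi_{k-p}$ as in (\ref{W}), the identity
\[
\hat X_{k,m,n}-\hat X_{k,n}=-\frac{(1-\theta_n^2)^{1/2}}{\ee\xi_0^2}\,\theta_n^m\sum_{p=0}^\infty\theta_n^p\,W_{k,m+1+p},
\]
which holds because $\hat X_{k,n}$ keeps the full series $\sum_{p\ge0}\theta_n^p\xi_{k-1-p}\xi_k$ while $\hat X_{k,m,n}$ retains only the terms $p\le m-1$ (and then I shift the index by $m$ and factor out $\theta_n^m$). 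Writing $c_n:=\frac{(1-\theta_n^2)^{1/2}}{\ee\xi_0^2}\theta_n^m$, this gives the deterministic bound
\[
\Big|\sum_{k=1}^n(\hat X_{k,m,n}-\hat X_{k,n})\Big|\le c_n\sum_{p=0}^\infty\theta_n^p\Big|\sum_{k=1}^nW_{k,m+1+p}\Big|.
\]
Note that, unlike (\ref{mdp-lem2-5}), there is a \emph{single} family of products here, since $\hat X_{k,n}$ carries only one noise pairing per $k$; so no second summand has to be estimated.

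Next I would run the weighting argument from the proof of Lemma \ref{mdp-lem2}. Using $\sum_{p\ge0}(p+1)\theta_n^p=K_n=(1-\theta_n)^{-2}$, the event $\{\,\frac{1}{b_n\sqrt n}|\sum_k(\hat X_{k,m,n}-\hat X_{k,n})|\ge r\,\}$ is contained in the union over $p\ge0$ of the events $\{|\sum_{k=1}^nW_{k,m+1+p}|\ge T_p\}$ with $T_p:=\frac{r\,b_n\sqrt n\,(p+1)}{c_nK_n}$. The maximal inequality (\ref{mdp-lem2-1}) then bounds each of these by $36\exp\!\big(-T_p^2/(\alpha_0 n+\beta_0 T_p)\big)$, so that
\[
\pp\Big(\frac{1}{b_n\sqrt n}\Big|\sum_{k=1}^n(\hat X_{k,m,n}-\hat X_{k,n})\Big|\ge r\Big)\le 36\sum_{p=0}^\infty\exp\Big(-\frac{T_p^2}{\alpha_0 n+\beta_0 T_p}\Big).
\]

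The decisive observation is that the relevant scale $c_nK_n=\frac{1}{\ee\xi_0^2}K_n\theta_n^m(1-\theta_n^2)^{1/2}$ is, up to the harmless constant $1/\ee\xi_0^2$, precisely the quantity $K_n\theta_n^m\sqrt{1-\theta_n^2}$ shown to vanish in Lemma \ref{mdp-lem2} via $\frac{m(1-\theta_n)}{|\log(1-\theta_n)|}\to\infty$. Splitting the denominator according to whether $\beta_0 T_p\le\alpha_0 n$ or not, one has $\frac{T_p^2}{\alpha_0 n+\beta_0 T_p}\ge\min\big\{\frac{T_p^2}{2\alpha_0 n},\frac{T_p}{2\beta_0}\big\}$; dividing by $b_n^2$ and using $T_p^2/(nb_n^2)=r^2(p+1)^2/(c_nK_n)^2$ and $T_p/b_n^2=r\sqrt n\,(p+1)/(b_nc_nK_n)$, both lower bounds diverge, since $c_nK_n\to0$, $\sqrt n/b_n\to\infty$ and $b_n\to\infty$. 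Thus, exactly as in (\ref{mdp-lem2-6})--(\ref{mdp-lem2-2}), the series is dominated by a convergent geometric series and collapses to a single term of the form $36(1-e^{-b_n^2\kappa_n})^{-1}e^{-b_n^2\kappa_n}$ with $\kappa_n\to\infty$, whence $\frac{1}{b_n^2}\log(\cdots)\to-\infty$.

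The only genuine work — and the step to watch — is the uniform control of the infinite sum over $p$: one must extract a lower bound on $T_p^2/(\alpha_0 n+\beta_0 T_p)$ that is at least linear in $(p+1)$ with a coefficient of order $b_n^2\kappa_n$ tending to $+\infty$, so that summing over $p$ costs nothing and the whole tail is governed by one super-$b_n^2$ exponential. This is the same computation carried out in (\ref{mdp-lem2-6})--(\ref{mdp-lem2-2}); the present lemma differs only in the innocuous shift of the lag from $m+p-l$ to $m+1+p$ and the multiplicative factor $1/\ee\xi_0^2$, so that argument applies essentially verbatim.
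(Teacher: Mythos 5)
Your proposal is correct and follows essentially the same route as the paper: the paper likewise rewrites $\sum_{k}(\hat X_{k,m,n}-\hat X_{k,n})$ as $\frac{(1-\theta_n^2)^{1/2}}{\ee\xi_0^2}\theta_n^m\sum_{p\ge 0}\theta_n^p\sum_{k}W_{k,m+p+1}$ and then simply invokes the argument of Lemma \ref{mdp-lem2}. You have in fact spelled out the weighting over $p$ via $K_n=(1-\theta_n)^{-2}$ and the key fact $K_n\theta_n^m\sqrt{1-\theta_n^2}\to 0$ in more detail than the paper does, and these details check out.
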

\begin{proof} Since for any $0\le j\le n$,
$$
\aligned
\sum_{k=1}^j (\hat X_{k,m,n}-\hat X_{k,n})=&\frac{(1-\theta_n^2)^{1/2}}{(\ee\xi_0^2)}\sum_{k=1}^j \sum_{p=m}^\infty\theta_n^p\xi_{k-1-p}\xi_k\\
=&\frac{(1-\theta_n^2)^{1/2}}{(\ee\xi_0^2)}\theta_n^m\sum_{k=1}^j \sum_{p=0}^\infty\theta_n^p\xi_{k-1-p-m}\xi_k\\
=& \frac{(1-\theta_n^2)^{1/2}}{(\ee\xi_0^2)}\theta_n^m\sum_{p=0}^\infty\theta_n^p\sum_{k=1}^j W_{k,m+p+1}
\endaligned
$$
where $W_{k,m+p+1}$ is defined in (\ref{W}), then by the same proof
of Lemma \ref{mdp-lem2}, the desired result can be obtained.
\end{proof}
At last, Proposition \ref{mdp-app} can be given by using Lemma \ref{lem6-1} and \ref{lem6-2}.

\section{Appendix}

\begin{proof}[{\bf Proof of Lemma \ref{lem-miao}}]
For each $n$, let
$$
Y_{j,n}=\sum_{i=1}^mX_{(j-1)m+i,n}, \ \  \ 1\le j\le l
$$
 where $l:=l(n)=\max\{j: jm\le n\}$, then $\{Y_{1,n},\ldots,
 Y_{l,n}\}$ are $1$-dependent random variables. Furthermore, take
 $p=p(n)$, such that
\begin{equation}\label{p}
p(n)\to\infty\ \  \text{and}\ \
\frac{b_n(mp)^{1+\gamma}}{\sqrt{n}}\to 0, \ \ \text{as}\ \
n\to\infty
\end{equation}
and define
$$
 Z_{h,n}=\sum_{(h-1)p<j<hp}Y_{j,n},\ \ 1\le h\le t
$$
where $t:=t(n)=\max\{h,hp<l\}$, then $\{Z_{1,n},\ldots,
 Z_{t,n}\}$ is an i.i.d. random sequence, and we have the
 following relations
\begin{equation}\label{lem-miao-1}
\aligned
 \sum_{i=1}^nX_{i,n}=&\sum_{j=1}^lY_{j,n}+\sum_{i=lm+1}^nX_{i,n}\\
 =&
 \sum_{h=1}^tZ_{h,n}+\sum_{j=tp+1}^lY_{j,n}+\sum_{h=1}^tY_{hp,n}+\sum_{i=lm+1}^nX_{i,n}.
 \endaligned
\end{equation}

\begin{lem}\label{lem1-miao} Under the assumptions of Lemma \ref{lem-miao}, for any $\lambda\in\rr$, we have
$$
 \lim_{n\to\infty}\frac{1}{b_n^2}\log\ee\exp\left(\lambda\frac{b_n}{\sqrt
 n}\sum_{h=1}^tZ_{h,n}\right)=\frac{\lambda^2\sigma^2}{2},
$$
i.e., for any $r>0$
$$
 \lim_{n\to\infty}\frac{1}{b_n^2}\log\pp\left(\frac{1}{b_n\sqrt
 n}\left|\sum_{h=1}^tZ_{h,n}\right|>r\right)=-\frac{r^2}{2\sigma^2}.
$$
\end{lem}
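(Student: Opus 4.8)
The plan is to exploit the independence of the super-blocks $\{Z_{h,n}\}_{1\le h\le t}$, which holds because a single intermediate summand $Y_{hp,n}$ is deleted between consecutive super-blocks while the $\{Y_{j,n}\}$ are $1$-dependent; together with stationarity this makes the $Z_{h,n}$ i.i.d. Consequently the moment generating function factorizes and
\begin{equation*}
\frac{1}{b_n^2}\log\ee\exp\left(\lambda\frac{b_n}{\sqrt n}\sum_{h=1}^t Z_{h,n}\right)=\frac{t}{b_n^2}\log\ee\exp\left(\lambda\frac{b_n}{\sqrt n}Z_{1,n}\right).
\end{equation*}
So the whole problem reduces to a single-super-block estimate, where $Z_{1,n}=\sum_{i=1}^{(p-1)m}X_{i,n}$ is a centered sum of $(p-1)m$ consecutive $m$-dependent variables.

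Next I would pin down the two scalars that must appear. Writing $s_n=\lambda b_n/\sqrt n$ (note $s_n\to0$, since condition (A) forces $b_n/\sqrt n\to0$), I expect the leading contribution to be the quadratic one, $\tfrac12 s_n^2\,\ee Z_{1,n}^2$. I would compute $\ee Z_{1,n}^2=\mathrm{Var}(Z_{1,n})$ by expanding the double sum and invoking condition (D): a summation-by-parts argument combined with $m^{-1}\mathrm{Var}(X_{1,n}+\cdots+X_{m,n})\to\sigma^2$ and $m^{-1}\sum_{i=1}^m i\,\ee(X_{1,n}X_{i+1,n})\to0$ shows that the weighted-covariance correction is negligible, whence $((p-1)m)^{-1}\mathrm{Var}(Z_{1,n})\to\sigma^2$. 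Combined with the block-counting identity $t(p-1)m/n\to1$ (from $l=\lfloor n/m\rfloor$ and $t\sim l/p$), this gives $\frac{t}{b_n^2}\cdot\tfrac12 s_n^2\,\ee Z_{1,n}^2\to\frac{\lambda^2\sigma^2}{2}$, the claimed limit. Note also that $s_n^2\,\mathrm{Var}(Z_{1,n})=O(b_n^2 pm/n)\to0$ by the choice of $p$ in (\ref{p}), so that $\log(1+x)$ may be linearized at the single-block level.

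The core of the argument, and the main obstacle, is to show that the cubic and higher-order contributions to $\log\ee e^{s_n Z_{1,n}}$ are negligible after multiplication by $t/b_n^2$, i.e.
\begin{equation*}
\frac{t}{b_n^2}\left(\log\ee e^{s_n Z_{1,n}}-\tfrac12 s_n^2\,\ee Z_{1,n}^2\right)\to 0.
\end{equation*}
I would do this by truncation. Splitting $Z_{1,n}$ according to whether the constituent $X_{i,n}$ exceed the level $K_n=\varepsilon(\sqrt n/b_n)^{1-1/(1+\gamma)}$ supplied by condition (C), the bounded part satisfies $|s_n Z_{1,n}^{\mathrm{trunc}}|\to0$ uniformly, precisely because the choice of $p$ forces $(pm)^{1+\gamma}b_n/\sqrt n\to0$; then the remainder bound $|e^x-1-x-\tfrac12 x^2|\le |x|^3 e^{|x|}$ controls the excess by $s_n^3\,\ee|Z_{1,n}^{\mathrm{trunc}}|^3$, which dies after normalization. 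For the complementary (large) part, condition (C) kills the probability that any constituent is large, while condition (B) provides the exponential-moment/tail-integral bound at the block scale $\sqrt n/(b_n m)$ needed to show that atypically large values contribute negligibly to $\ee e^{s_n Z_{1,n}}$. Balancing the truncation level against these two estimates is the delicate step; everything else is bookkeeping.

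Finally, having established $\frac{1}{b_n^2}\log\ee\exp(\lambda\frac{b_n}{\sqrt n}\sum_h Z_{h,n})\to\frac{\lambda^2\sigma^2}{2}$ for every $\lambda\in\rr$, the probability statement follows at once: the limiting cumulant generating function $\lambda\mapsto\lambda^2\sigma^2/2$ is finite and differentiable everywhere, so the G\"artner--Ellis theorem (\cite{DeZe}) applies and the Legendre transform yields the rate $r^2/(2\sigma^2)$, exactly as in the statement.
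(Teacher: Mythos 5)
Your proposal is correct and follows essentially the same route as the paper: Bernstein blocking into i.i.d.\ super-blocks, factorization of the moment generating function, a Taylor expansion whose quadratic term is identified via condition (D) and the block-count relation $t(p-1)m/n\to 1$, truncation to tame the cubic remainder using conditions (B) and (C), and the G\"artner--Ellis theorem to pass to the probability statement. The only organizational difference is that the paper truncates each $X_{i,n}$ at level $\tau\sqrt n/b_n$ up front and handles the discarded part in a separate exponential-equivalence step (its Step 2, which decouples the $(p-1)m$ dependent summands inside one super-block via an even/odd Cauchy--Schwarz split of the $1$-dependent $Y$-blocks plus an intra-block H\"older bound, so that only exponential moments at the scale $b_nm/\sqrt n$ supplied by condition (B) are needed), whereas you truncate inside the single-block estimate; if you work with the untruncated $\ee e^{s_nZ_{1,n}}$ directly you will need that same decoupling device to justify its finiteness and the negligibility of the large part, since a naive H\"older bound across the $(p-1)m$ summands would demand exponential moments at the unavailable scale $b_nmp/\sqrt n$.
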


\begin{proof} For $\tau>0$, define
$$
 X_{i,n}^\tau:=X_{i,n}\mathbb{I}_{\left\{|X_{i,n}|\le \tau \frac{\sqrt
 n}{b_n}\right\}},\ \ \ 1\le i\le n,
$$
$$
Y_{j,n}^{\tau}=\sum_{i=1}^mX_{(j-1)m+i,n}^\tau, \ \  \ 1\le j\le l
$$
and
$$
 Z_{h,n}^\tau=\sum_{(h-1)p<j<hp}Y_{j,n}^\tau,\ \ 1\le h\le t
$$
 where $l, p, t$ are defined in the above notations. Now we divide the proof
 into the following two steps.

{\bf Step 1.} We claim that for any $r>0$
\begin{equation}\label{Step-1}
 \lim_{n\to\infty}\frac{1}{b_n^2}\log\pp\left(\frac{1}{b_n\sqrt
 n}\left|\sum_{h=1}^tZ^\tau_{h,n}\right|>r\right)=-\frac{r^2}{2\sigma^2}.
\end{equation}

Since $\{Z_{1,n}^{\tau},\ldots,
 Z_{t,n}^{\tau}\}$ is an i.i.d. random sequence, then for
 $\lambda\in\rr$,
 $$
\aligned
  &\frac{1}{b_n^2}\log\ee\exp\left(\lambda\frac{b_n}{\sqrt
n}\sum_{h=1}^tZ_{h,n}^\tau\right)
=\frac{t}{b_n^2}\log\ee\exp\left(\lambda\frac{b_n}{\sqrt
n}Z_{1,n}^\tau\right)\\
=&\frac{t}{b_n^2}\log\left(1+\lambda\frac{b_n}{\sqrt n}\ee
Z_{1,n}^\tau+\frac{\lambda^2b^2_n}{
2n}\ee(Z_{1,n}^\tau)^2+O\left(\frac{\lambda^3b^3_n}{n^{3/2}}\ee(Z_{1,n}^\tau)^3\right)\right).
  \endaligned$$
First from the conditions (A), (B), (C) and Fubini Theorem, we have
\begin{equation}\label{Step-1-1}
\aligned
  &m \ee\left( X_{1,n}^2\mathbb{I}_{\left\{|X_{1,n}|> \tau
\frac{\sqrt
 n}{b_n}\right\}}\right)\\
 =&\frac{n}{b_n^2 m}\ee\left( \left(\frac{b_n m}{\sqrt n}X_{1,n}\right)^2\mathbb{I}_{\left\{|X_{1,n}|> \tau
\frac{\sqrt
 n}{b_n}\right\}}\right)
 \\
 =&\frac{2n}{b_n^2 m}\int_{0}^\infty x\pp\left(|X_{1,n}|\ge\frac{\sqrt
nx}{ b_nm}, |X_{1,n}|> \tau \frac{\sqrt
 n}{b_n} \right)dx\\
 =& \frac{2n}{b_n^2 m}\int_{\tau m}^\infty x\pp\left(|X_{1,n}|\ge\frac{\sqrt
nx}{ b_nm} \right)dx+\frac{2n}{b_n^2 m}\int_0^{\tau m}
x\pp\left(|X_{1,n}|>\tau \frac{\sqrt
 n}{b_n} \right)dx\\
 = &\frac{2n}{b_n^2 m}\int_{\tau m}^\infty x\pp\left(|X_{1,n}|\ge\frac{\sqrt
nx}{ b_nm} \right)dx+\frac{n\tau^2m}{b_n^2} \pp\left(|X_{1,n}|>\tau
\frac{\sqrt
 n}{b_n} \right)\\
 \le &\frac{2n}{b_n^2 m}\int_{\tau m}^\infty x\pp\left(|X_{1,n}|\ge\frac{\sqrt
nx}{ b_nm}
\right)dx+\tau^2\left(\frac{\sqrt{n}}{b_n}\right)^{2+\frac{1}{1+\gamma}}
\pp\left(|X_{1,n}|>\tau \frac{\sqrt n}{b_n} \right)\\
\to & 0,
 \endaligned
\end{equation}
where we utilized the fact: for all sufficiently large $n$,
$$
m\le \left(\frac{\sqrt{n}}{b_n}\right)^{\frac{1}{1+\gamma}}.
$$
Noting that $\ee X_{1,n}=0$ and the fact $(tmp)/n\to 1$, we have, by
(\ref{Step-1-1}),
$$
\aligned
 \frac{t}{b_n^2}\frac{\lambda b_n}{\sqrt n}|\ee
Z_{1,n}^\tau|\le &\frac{\lambda t(p-1)m}{b_n\sqrt n}\ee
\left(\left|X_{1,n}\right|\mathbb{I}_{\left\{|X_{1,n}|> \tau
\frac{\sqrt
 n}{b_n}\right\}}\right)\\
 \le &\frac{\lambda tpm}{\tau n}\ee\left(
X_{1,n}^2\mathbb{I}_{\left\{|X_{1,n}|> \tau \frac{\sqrt
 n}{b_n}\right\}}\right)\to 0.
 \endaligned
$$
From the definition of $Z_{1,n}^\tau$, we have
$$
\aligned
 \ee(Z_{1,n}^\tau)^2=&(p-1)\ee(Y_{1,n}^\tau)^2+2\sum_{j=1}^{p-2}\sum_{i=j+1}^{p-1}\ee(Y_{j,n}^\tau
 Y_{i,n}^\tau)\\
 =&(p-1)\ee(Y_{1,n}^\tau)^2+2\sum_{j=1}^{p-2}\ee(Y_{j,n}^\tau
 Y_{j+1,n}^\tau)+2\sum_{j=1}^{p-3}\sum_{i=j+2}^{p-1}\ee(Y_{j,n}^\tau)\ee(
 Y_{i,n}^\tau).
\endaligned
$$
Since the estimate (\ref{Step-1-1}) implies
$$
\aligned
 &\frac{1}{m}\ee(Y_{1,n}^\tau-Y_{1,n})^2\\
 =& \frac{1}{m}\ee\left(
X_{1,n}\mathbb{I}_{\left\{|X_{1,n}|> \tau \frac{\sqrt
 n}{b_n}\right\}}+\cdots+
X_{m,n}\mathbb{I}_{\left\{|X_{m,n}|> \tau \frac{\sqrt
 n}{b_n}\right\}}\right)^2\\
 \le & m \ee\left(
X_{1,n}^2\mathbb{I}_{\left\{|X_{1,n}|> \tau \frac{\sqrt
 n}{b_n}\right\}}\right)\to 0
 \endaligned
$$
then we have
$$
\frac1m\ee(Y_{1,n}^\tau)^2\to \sigma^2
$$
where we used the condition (D) and the triangle inequality in the
$L^2$ spaces:
$$
\|Y_{1,n}^\tau\|_2\le \|Y_{1,n}^\tau-Y_{1,n}\|_2+\|Y_{1,n}\|_2,\ \
\|Y_{1,n}\|_2\le\|Y_{1,n}^\tau-Y_{1,n}\|_2+\|Y_{1,n}^\tau\|_2.
$$
So we have
$$
 \frac{\lambda^2t }{n}(p-1)\ee(Y_{1,n}^\tau)^2=\frac{\lambda^2
tm(p-1)}{n}\frac{\ee(Y_{1,n}^\tau)^2}{m}\to \lambda^2\sigma^2.
$$
Similarly, we have
\begin{equation}
\aligned
 &\frac{1}{m}\sum_{i=1}^{m}i\ee|(X_{1,n}^\tau-X_{1,n})(X_{i+1,n}^\tau-X_{i+1,n})|\\
 \le & \frac{m+1}{2}\ee\left(
X_{1,n}^2\mathbb{I}_{\left\{|X_{1,n}|> \tau \frac{\sqrt
 n}{b_n}\right\}}\right)\to 0,
\endaligned
\end{equation}
\begin{equation}
\aligned
&\frac{1}{m}\sum_{i=1}^{m}i\ee|(X_{1,n}^\tau-X_{1,n})X_{i+1,n}|
\\
=& \frac{1}{m}\sum_{i=1}^{m}i\left\{\ee\left(
|X_{1,n}|\mathbb{I}_{\left\{|X_{1,n}|> \tau \frac{\sqrt
 n}{b_n}\right\}}
|X_{i+1,n}|\mathbb{I}_{\left\{|X_{i+1,n}|\le \tau \frac{\sqrt
 n}{b_n}\right\}}\right)\right.\\
& \ \ \ \left. + \ee\left( |X_{1,n}|\mathbb{I}_{\left\{|X_{1,n}|>
\tau \frac{\sqrt
 n}{b_n}\right\}}
|X_{i+1,n}|\mathbb{I}_{\left\{|X_{i+1,n}|> \tau \frac{\sqrt
 n}{b_n}\right\}}\right)\right\}\\
 \le & (m+1)\ee\left(
X_{1,n}^2\mathbb{I}_{\left\{|X_{1,n}|> \tau \frac{\sqrt
 n}{b_n}\right\}}\right)\to 0,
\endaligned
\end{equation}
and
\begin{equation}
\aligned
\frac{1}{m}\sum_{i=1}^{m}i\ee|(X_{i+1,n}^\tau-X_{i+1,n})X_{1,n}|\to
0,
\endaligned
\end{equation}
from which we can deduce
$$ \aligned
 &\frac{1}{m}\sum_{i=1}^{m}i\ee(X_{1,n}^\tau
 X_{i+1,n}^\tau)\\
 =&\frac{1}{m}\sum_{i=1}^{m}i \left[\ee(X_{1,n}
 X_{i+1,n})+\ee(X_{1,n}^\tau-X_{1,n})(X_{i+1,n}^\tau-X_{i+1,n})\right.\\
 &\ \ \ \  \left. +\ee(X_{1,n}^\tau-X_{1,n})X_{i+1,n}+\ee
 X_{1,n}(X_{i+1,n}^\tau-X_{i+1,n})\right]\to 0.
\endaligned
$$
 Hence we can get
$$
\aligned
 &\frac{t\lambda^2
}{n}\sum_{j=1}^{p-2}\ee(Y_{j,n}^\tau
 Y_{j+1,n}^\tau)\\
 = & \frac{\lambda^2 t(p-2)m}{n}\left\{\frac{1}{m}\sum_{i=1}^{m}i\ee(X_{1,n}^\tau
 X_{i+1,n}^\tau)+\frac{1}{m}\frac{m(m-1)}{2}(\ee X_{1,n}^\tau)^2\right\}\to 0
\endaligned
$$
and
$$
\frac{t\lambda^2
}{n}\sum_{j=1}^{p-3}\sum_{i=j+2}^{p-1}|\ee(Y_{j,n}^\tau)\ee(
 Y_{i,n}^\tau)|\le \frac{\lambda^2 tp^2m^2
}{n}(\ee X_{1,n}^\tau)^2\to 0
$$
where we used the condition (\ref{p}) and the fact that
$$
(\ee X_{1,n}^\tau)^2=\left(\ee X_{1,n}\mathbb{I}_{\left\{|X_{1,n}|>
\tau \frac{\sqrt
 n}{b_n}\right\}}\right)^2\le
\frac{b_n^2}{n\tau^2}\left(\ee
X_{1,n}^2\mathbb{I}_{\left\{|X_{1,n}|> \tau \frac{\sqrt
 n}{b_n}\right\}}\right)^2.
$$
So we have
\begin{equation}\label{lem-miao-3}
\frac{\lambda^2 t}{n}\ee(Z_{1,n}^\tau)^2\to \lambda^2\sigma^2.
\end{equation}
Furthermore, for any $\varepsilon>0$, we have for all $n$ sufficient
large,
$$
\aligned
 \frac{\lambda^3b_n t}{n^{3/2}}
\ee|Z_{1,n}^\tau|^3\le & \frac{\lambda^3b_n t}{n^{3/2}}
\ee\left(|Z_{1,n}^\tau|^2\left(\sum_{i=1}^{(p-1)m}|X_{i,n}^\tau|
\mathbb{I}_{\left\{|X_{i,n}^\tau|\le \varepsilon \frac{\sqrt
 n}{b_nmp}\right\}}\right)\right)\\
& \ \ \ \ +\frac{\lambda^3b_n t}{n^{3/2}}
\ee\left(|Z_{1,n}^\tau|^2\left(\sum_{i=1}^{(p-1)m}|X_{i,n}^\tau|
\mathbb{I}_{\left\{|X_{i,n}^\tau|> \varepsilon \frac{\sqrt
 n}{b_nmp}\right\}}\right)\right)\\
 \le &\frac{\varepsilon\lambda^3t}{n}\ee|Z_{1,n}^\tau|^2
 +\frac{\tau^3\lambda^3 tm^3p^3}{b_n^{2}}\pp\left\{|X_{1,n}|> \varepsilon \frac{\sqrt
 n}{b_nmp}\right\}\\
= &\frac{\varepsilon\lambda^3t}{n}\ee|Z_{1,n}^\tau|^2
 +\frac{\tau^3\lambda^3 tmp}{n}\frac{nm^2p^2}{b_n^{2}}\pp\left\{|X_{1,n}|> \varepsilon \frac{\sqrt
 n}{b_nmp}\right\}\\
 \le & \frac{\varepsilon\lambda^3t}{n}\ee|Z_{1,n}^\tau|^2
 +\frac{\tau^3\lambda^3 tmp}{n}\left(\frac{\sqrt n}{b_n}\right)^{2+\frac{2}{1+\gamma}}\pp\left\{|X_{1,n}|> \varepsilon \left(\frac{\sqrt
 n}{b_n}\right)^{1-\frac{1}{1+\gamma}}\right\},
\endaligned
$$
where we used the condition
$$
\lim_{n\to\infty}\frac{b_n(mp)^{1+\gamma}}{\sqrt n}=0\ \
\Longrightarrow \ \ \lim_{n\to\infty}\frac{mp}{(\sqrt
n/b_n)^{1/(1+\gamma)}}=0.
$$
Therefore, by noting $mpt/n\to1$ and the arbitrariness of
$\varepsilon$, we have, from the relation (\ref{lem-miao-3}) and the
condition (C),
$$
\frac{\lambda^3b_n t}{n^{3/2}} \ee|Z_{1,n}^\tau|^3\to 0.
$$
 Hence from the above discussions, we have
$$
\aligned
  &\frac{1}{b_n^2}\log\ee\exp\left(\lambda\frac{b_n}{\sqrt
n}\sum_{h=1}^tZ_{h,n}^\tau\right)\to \frac 12 \lambda^2\sigma^2,
  \endaligned$$
which yields, by the G\"artner-Ellis theorem, for any $r>0$
$$
 \lim_{n\to\infty}\frac{1}{b_n^2}\log\pp\left(\frac{1}{b_n\sqrt
 n}\left|\sum_{h=1}^tZ_{h,n}^\tau\right|>r\right)=-\frac{r^2}{2\sigma^2}.
$$

{\bf Step 2.} We shall prove that for any $r>0$ and $\tau>0$,
\begin{equation}\label{lem-miao-2}
\lim_{n\to \infty}\frac{1}{b_n^2}\log \pp\left(\frac{1}{b_n\sqrt
n}\left|\sum_{h=1}^t(Z_{h,n}^\tau-Z_{h,n})\right|\ge
r\right)=-\infty.
\end{equation}
Let $[a]$ denote the integral part of $a$, then for any $\lambda>0$,
\begin{equation}\label{Step-2-1}
\aligned
 &\frac{1}{b_n^2}\log \pp\left(\frac{1}{b_n\sqrt
n}\left|\sum_{h=1}^t(Z_{h,n}^\tau-Z_{h,n})\right|\ge r\right)\\
\le & -r\lambda+\frac{1}{b_n^2}\log \ee\exp\left(\frac{\lambda
b_n}{\sqrt
n}\sum_{h=1}^t\left|Z_{h,n}^\tau-Z_{h,n}\right|\right)\\
\le &  -r\lambda+\frac{t}{b_n^2}\log \ee\exp\left(\frac{\lambda
b_n}{\sqrt
n}\sum_{j=1}^{p-1}\left|Y_{j,n}^\tau-Y_{j,n}\right|\right)\\
\le & -r\lambda+\frac{t}{b_n^2}\log
\left(\ee\exp\left(2\frac{\lambda b_n}{\sqrt
n}\sum_{j=1}^{[(p-1)/2]}\left|Y_{2j,n}^\tau-Y_{2j,n}\right|\right)\right)^{1/2}\\
&\ \ \ \  \times \left(\ee\exp\left(2\frac{\lambda b_n}{\sqrt
n}\sum_{j=1}^{L}\left|Y_{2j-1,n}^\tau-Y_{2j-1,n}\right|\right)\right)^{1/2}\\
= & -r\lambda+\frac{t(p-1)}{2b_n^2}\log \ee\exp\left(2\frac{\lambda
b_n}{\sqrt n}\left|Y_{1,n}^\tau-Y_{1,n}\right|\right)
\endaligned
\end{equation}
where
$$
L:=L(n):=\begin{cases} \left[(p-1)/2\right]\ \ & \ \text{if $p-1$ is
even }\\
\left[(p-1)/2\right]+1\ \ & \ \text{if $p-1$ is odd. }
\end{cases}
$$
By the definitions of $Y_{1,n}^\tau, Y_{1,n}$ and the elementary
inequality $\log x\le x-1$ for all $x>0$, then we have
$$
\aligned
 &\frac{t(p-1)}{2b_n^2}\log
\ee\exp\left(2\frac{\lambda b_n}{\sqrt
n}\left|Y_{1,n}^\tau-Y_{1,n}\right|\right)\\
\le & \frac{tp}{2b_n^2}\left(\ee\exp\left(2\frac{\lambda b_nm}{\sqrt
n}\left|X_{1,n}^\tau-X_{1,n}\right|\right)-1\right)\\
\le & \frac{tp}{2b_n^2}(e^M-1)\pp\left(|X_{1,n}|>\tau\frac{\sqrt
n}{b_n}\right)+\frac{tp}{2b_n^2}\int^\infty_Me^x\pp\left(|X_{1,n}|\ge\frac{\sqrt
nx}{2\lambda b_nm} \right)dx\\
\le & \frac{tp}{2n\tau^2}(e^M-1)\ee\left(
X_{1,n}^2\mathbb{I}_{\left\{|X_{1,n}|> \tau \frac{\sqrt
 n}{b_n}\right\}}\right)+\frac{n}{2b_n^2m}\int^\infty_Me^x\pp\left(|X_{1,n}|\ge\frac{\sqrt
nx}{2\lambda b_nm} \right)dx\\
 \to & 0
\endaligned
$$
where we used (\ref{Step-1-1}) and the condition (B). Thus the
desired result follows from (\ref{Step-2-1}) and the arbitrariness
of $\lambda$.
\end{proof}
\begin{lem}\label{lem2-miao} Under the assumptions of Lemma \ref{lem-miao}, for any
$r>0$, we have
$$
 \lim_{n\to\infty}\frac{1}{b_n^2}\log\pp\left(\frac{1}{b_n\sqrt
 n}\left|\sum_{j=tp+1}^lY_{j,n}+\sum_{h=1}^tY_{hp,n}+\sum_{i=lm+1}^nX_{i,n}\right|>r\right)=-\infty.
$$
\end{lem}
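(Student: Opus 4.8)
The plan is to split the remainder into its three natural pieces and show each is separately negligible at the moderate deviation scale. By the union bound and the elementary inequality $\frac{1}{b_n^2}\log(a_n+b_n+c_n)\le \frac{\log 3}{b_n^2}+\frac{1}{b_n^2}\max\{\log a_n,\log b_n,\log c_n\}$, it suffices to prove for each term $R_n\in\{\sum_{j=tp+1}^lY_{j,n},\ \sum_{h=1}^tY_{hp,n},\ \sum_{i=lm+1}^nX_{i,n}\}$ that $\frac{1}{b_n^2}\log\pp(\frac{1}{b_n\sqrt n}|R_n|\ge r)\to-\infty$ for every $r>0$. For each piece I would run an exponential Chebyshev estimate after truncating at level $\tau\sqrt n/b_n$, reusing the variance estimate (\ref{Step-1-1}) and the $\log x\le x-1$ device from (\ref{Step-2-1}) to discard the truncation error exactly as in Step 2 of the proof of Lemma \ref{lem1-miao}. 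The key point, in each case, is that the number of effectively independent blocks is so small that the quadratic (variance) contribution to the normalized cumulant generating function vanishes.

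Concretely, the separator term $\sum_{h=1}^tY_{hp,n}$ is a sum of $t$ i.i.d.\ random variables (the blocks $Y_{hp,n}$ are spaced $p\ge 2$ apart, hence independent by $1$-dependence), so the same computation as in Step 1 gives
\[
\frac{t}{b_n^2}\log\ee\exp\!\left(\lambda\frac{b_n}{\sqrt n}Y_{1,n}^\tau\right)\sim\frac{\lambda^2}{2n}\,t\,\ee(Y_{1,n}^\tau)^2\sim\frac{\lambda^2\sigma^2}{2}\cdot\frac{tm}{n}\longrightarrow 0,
\]
since $tmp/n\to 1$ forces $tm/n\sim 1/p\to 0$. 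Hence for every $\lambda>0$ the exponential Chebyshev bound yields $\limsup_n \frac{1}{b_n^2}\log\pp(\frac{1}{b_n\sqrt n}|\sum_hY_{hp,n}|\ge r)\le-\lambda r$, and letting $\lambda\to\infty$ gives $-\infty$. The tail term $\sum_{j=tp+1}^lY_{j,n}$ contains at most $p$ consecutive blocks, which are only $1$-dependent; I would first decouple them into odd- and even-indexed subfamilies via Cauchy--Schwarz (as in (\ref{Step-2-1})) and then apply the same estimate, the variance contribution now being controlled by $\frac{\lambda^2}{2n}\cdot pm\sigma^2=\frac{\lambda^2\sigma^2}{2}\cdot\frac{pm}{n}\to 0$ (note $pm/n\to 0$ follows from $b_n(mp)^{1+\gamma}/\sqrt n\to 0$). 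The final term $\sum_{i=lm+1}^nX_{i,n}$ is a single partial block of fewer than $m$ summands, whose variance is of order at most $m\sigma^2$, giving contribution $m\sigma^2/n\to 0$; it is therefore negligible by the one-block version of the same argument.

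The main obstacle I expect is bookkeeping the truncation errors uniformly across the three pieces while having only the tail hypotheses (A)--(D) (and in particular no genuine exponential moment): each truncated-part cumulant computation must be accompanied by showing $\frac{1}{b_n^2}\log\ee\exp(\lambda\frac{b_n}{\sqrt n}\sum|Y^\tau-Y|)\to 0$, which is precisely where condition (B) and the estimate (\ref{Step-1-1}) are used, and the $1$-dependence of the tail blocks forces the extra odd/even splitting. Once these truncation and decoupling steps are carried out, the vanishing of all three variance contributions---driven respectively by $tm/n\to 0$, $pm/n\to 0$, and $m/n\to 0$---closes the argument.
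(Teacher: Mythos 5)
Your argument follows exactly the route the paper takes: the paper's entire proof of this lemma is the single sentence that each of the three remainder terms is negligible ``along the lines of the proof of Lemma \ref{lem1-miao}'', and your proposal simply carries that plan out, correctly identifying that after the union bound, the truncation, and the odd/even decoupling of the $1$-dependent tail blocks, the quadratic terms in the normalized cumulant expansions vanish because $tm/n\sim 1/p\to 0$, $pm/n\to 0$ and $m/n\to 0$, so that exponential Chebyshev with $\lambda\to\infty$ yields the $-\infty$ rate. You in fact supply more detail than the paper does, so this is fine.
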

\begin{proof}
Taking along the lines of the proof of Lemma \ref{lem1-miao}, we can
prove that anyone of the following three terms
$$
\sum_{j=tp+1}^lY_{j,n},\ \ \ \sum_{h=1}^tY_{hp,n}\ \ \text{and}\ \ \
\sum_{i=lm+1}^nX_{i,n}
$$
 can be negligible with respect
to the moderate deviation principle.
\end{proof}
At last, based on Lemma \ref{lem1-miao} and Lemma \ref{lem2-miao},
the proof of Lemma \ref{lem-miao} can be finished.
\end{proof}



\begin{thebibliography}{99}




\bibitem{BD1} W. Bryc and A. Dembo, {\it  Large deviations for quadratic
functionals of Gaussian processes.} J. Theoret. Probab., 1997, {\bf
10:} 307-322.

\bibitem{BGR} B. Bercu, F. Gamboa, A. Rouault, {\it Large deviations for quadratic forms of stationary Gaussian processes.} Stochastic Process. Appl., 1997, {\bf 71:} 75-90.


\bibitem{Chen} X. Chen, {\it Moderate deviations for m-dependent random variables with Banach space values.} Stat.
Probab. Lett., 1997, {\bf 35:} 123-134.

\bibitem{DeZe} A. Dembo and O. Zeitouni, {\it Large Deviations Techniques and Applications, 2nd edn.} Springer, New York,
1998.

\bibitem{DF79} D. A. Dickey and W. A. Fuller, {\it Distribution of the estimators for autoregressive time series with a unit root.} J. Amer. Statist. Assoc., 1979, {\bf 74:} 427-431.


 \bibitem{DF81} D. A. Dickey and W. A. Fuller, {\it Likelihood ratio statistics for autoregressive time series with a unit root.} Econometrica, 1981, {\bf 49:} 1057-1072.

\bibitem{DGW06} H. Djellout, A. Guillin, L. Wu, {\it Moderate deviations of empirical periodogram
and non-linear functionals of moving average processes.} Ann. I. H.
Poincar\'e-PR, 2006, {\bf 42:} 393-416.

\bibitem
{DV2}  M. D. Donsker,  S. R. S. Varadhan, {\it Large deviations for
stationary Gaussian processes.}  Comm. Math. Phys., 1985, {\bf 97:} 187-210.


\bibitem{Ellis} R. S. Ellis, {\it Entropy, large deviations, and statistical
mechanics.} 1985, Springer-Verlag, New York.

\bibitem{GiPh} L. Giraitis and P. C. B. Phillips, {\it Uniform limit theory for stationary autoregression.} J. Time Ser. Anal., 2006, {\bf 27(1):}, 51-60.



\bibitem{MaMl} A. Mas and L. Menneteau, {\it Large and moderate deviations for infinite-dimensional autoregressive
processes.} J. Multi. Analy., 2003, {\bf 87:} 241-260.


\bibitem{Ml} L. Menneteau, {\it Some laws of the iterated logarithm in Hilbertian autoregressive models.}
 J. Multi. Analy., 2005, {\bf 92:} 405-425.

\bibitem{MY} Y. Miao and G. Y. Yang, {\it A moderate deviation principle for $m$-dependent
random variables with unbounded $m$.} Acta Appl. Math., 2008, {\bf 104(2):} 191-199.


 \bibitem{MS} Y. Miao and S. Shen, {\it Moderate deviation principle for autoregressive processes.}  J. Multivariate Anal., 2009, {\bf 100:} 1952-1961.


  \bibitem{M1} Y. Miao, {\it The discounted large deviation principle for autoregressive
  processes.} Comm. Statist. Theory Methods., 2011, {\bf 40(14):} 2455-2461.





\bibitem{Ph-Ma07} P. C. B. Phillips and T. Magdalinos, {\it Limit theory for moderate deviations from a unit root.} J. Econometrics, 2007, {\bf 136 (1):},  115-130.




\bibitem{Wu04} L. M. Wu, {\it On large deviations for moving average
processes.} Probability, finance and insurance, 15--49, World Sci. Publ., River Edge, NJ, 2004.







\end{thebibliography}
\end{document}